\newtheorem{theorem}{Theorem}[section]
\newtheorem{lemma}[theorem]{Lemma}
\newtheorem{proposition}[theorem]{Proposition}
\newtheorem{remark}{Remark}[section]
\newenvironment{proof}[1][Proof]{\textbf{#1.} }{\ \rule{0.5em}{0.5em}}
\DeclareMathOperator{\inter}{int}
\begin{document}

\title{Global solutions and asymptotic behavior for a parabolic degenerate coupled
system arising from biology}
\date{}
\author{{\normalsize Gabriela\ Li\c{t}canu* and Cristian Morales-Rodrigo**}\\{\small *Institute of Mathematics "O. Mayer", Romanian Academy,} \\{\small \ 700505 Ia\c{s}i, Romania. e-mail: litcanu@uaic.ro}\\{\small Institute of Mathematics, Witten/Herdecke University, } \\{\small D-58453 Witten, Germany.} \\{\small **Institute of Applied Mathematics and Mechanics, Warsaw University, } \\{\small 02-097 Warsaw, Poland. e-mail: cristianmatematicas@yahoo.com}}
\maketitle

\begin{abstract}
{\small In this paper we will focus on a parabolic degenerate system with
respect to unknown functions }$u$ {\small and }$w$ {\small on a bounded domain
of the two-dimensional Euclidean space. This system appears as a mathematical
model for some biological processes. Global existence and uniqueness of a
nonnegative classical H\"{o}lder continuous solution are proved. The last part
of the paper is devoted to the study of the asymptotic behavior of the
solutions.}

\ 

\textit{AMS Subject Classifications}{\small : 35B30, 35B40, 35B45, 92C17}

\ \ 

\textit{Keywords}{\small : parabolic-degenerate system, global existence,
classical solutions, steady state, asymptotic behavior.}

\end{abstract}

\section{Introduction}

\label{introduction} \ 

\setcounter{equation}{0} \setcounter{figure}{0}

During the last years models originated from biology earned a privileged place
in mathematical modeling and became the focus of interest of mathematicians
and biologists as well. In many cases the study of these models involves
challenging mathematical problems that originate in the intrinsic mathematical
structure of the model. Moreover the possibility of taking suitable hypotheses
is limited by the necessity to fit with experimental data of the process the
model originates in.

Let us consider the following initial-boundary problem:%
\begin{align}
&  \frac{\partial u}{\partial t}=a\Delta u-b\nabla\cdot(u\chi(w)\nabla
w)+f(u,w) & x  &  \in\Omega,\quad t\in\mathbb{R}_{+}\label{eq1_gen}\\
&  \frac{\partial w}{\partial t}=-kw^{\beta}u & x  &  \in\Omega,\quad
t\in\mathbb{R}_{+}\label{eq2_gen}\\
&  u(x,0)=u_{0}(x)\geqslant0 & x  &  \in\Omega\label{ic1_gen}\\
&  w(x,0)=w_{0}(x)>0 & x  &  \in\Omega\label{ic2_gen}%
\end{align}
where $\Omega\subseteq\mathbb{R}^{N}$ is a domain, $a,b$ and $k$ are positive
constants, $\chi(w)=w^{-\alpha}$, $0\leqslant\alpha<1$, $\beta\geqslant1$ and
$f$ is a given function. If $\Omega$ is bounded, then the system
(\ref{eq1_gen})-(\ref{ic2_gen})\ is considered together with the no-flux
boundary condition%
\begin{equation}
\frac{\partial u}{\partial\eta}-u\chi(w)\frac{\partial w}{\partial\eta
}=0\qquad\qquad\qquad x\in\partial\Omega,\quad t\in\mathbb{R}_{+}
\label{bc_gen}%
\end{equation}
where $\eta$ denotes the unit outward normal vector of $\partial\Omega$.

This system is a particular version of the well-known mathematical model
proposed by Keller and Segel \cite{keller-segel1} (see also
\cite{keller-odell1}, \cite{keller-odell2}, \cite{keller-segel2}) with an
additional reaction term $f(u,w)$ in the first equation. The Keller-Segel
model was proposed in order to describe the spatial aggregation of cellular
slime molds which move toward high concentrations of some chemical substance
secreted by the cells themselves. The function $u(x,t)$ describes the density
distribution of the cell population, $w(x,t)$ denotes the concentration of the
chemical substance at a position $x\in\Omega$ and a time $t\in\mathbb{R}_{+}$
and the function $\chi$\ is the chemotactic sensitivity.

The classical Keller-Segel model, when the second variable is also supposed to
be diffusive, has been subject of many papers (see, for example, the surveys
of Horstmann \cite{horstmann1}, \cite{horstmann2} and the references given
therein). In the literature there are many theoretical results for the
Keller-Segel model concerning existence and uniqueness as well as the
qualitative behavior of the solutions. Most of the results were focused on the
global existence of solutions versus blow-up in finite time. Both behaviors
strongly depend on the initial data and space dimension.

The system (\ref{eq1_gen})-(\ref{bc_gen}) also appears as a simplified
mathematical model describing the tumor growth when the formation of new blood
vessels from the pre-existing vascular network is initiated (angiogenesis). In
this case, the function $u(x,t)$ describes the tumor cells density and
$w(x,t)$ denotes the density of the extracellular matrix (the surrounding
healthy tissue degraded locally by the action of tumor cells). There are
several models of different stages of the angiogenesis process incorporating
also the action of some degradative enzymes, cell cycle elements or cell age
structures. For a more thorough biological background and numerical results
concerning the angiogenesis process see, for example, \cite{anderson1},
\cite{anderson2}, \cite{chaplain}, \cite{RewAngio}, \cite{sheratt1}. We refer
also to \cite{walker1}, \cite{walker2}\ where the global existence and
uniqueness of solutions in the case of some systems related with this process
are investigated.

Previously, a version of the system (\ref{eq1_gen})-(\ref{bc_gen}) was studied
by Rascle in \cite{rascle2} (see also \cite{rascle1}) with the boundary
condition (\ref{bc_gen}) replaced with%
\begin{equation}
\frac{\partial u}{\partial\eta}=0\text{.} \label{bcR}%
\end{equation}
Instead of (\ref{ic2_gen}) he takes a positive constant as initial condition
for the function $w$, $w_{0}(x)\equiv w_{0}>0$ and $f(u,v)$ satisfying the
following condition%
\begin{equation}
\exists L>0,\ \forall u\in\mathbb{R},\ \forall w>0,\ \left\vert
f(u,w)\right\vert \leqslant L\left\vert u\right\vert \text{.} \label{f}%
\end{equation}

In the previous hypotheses, the local existence and uniqueness of a classical
H\"{o}lder continuous solution of the system (\ref{eq1_gen})-(\ref{ic2_gen})
has been proved when $\Omega\subset\mathbb{R}^{N}$ is a bounded domain with
smooth boundary $\partial\Omega$. The global existence has been shown in one
space dimension. We mention that another result, in the one dimensional space,
concerning the global existence and uniqueness of classical solutions for a
similar system is given in \cite{gua}.

In more than one dimension, when $\Omega$ is the whole space $\mathbb{R}^{N}$,
the system (\ref{eq1_gen})-(\ref{bc_gen}) has been considered in
\cite{perthame1}, \cite{perthame2}, \cite{perthame3} with $\chi(w)$ a given
positive function on $\mathbb{R}_{+}$\ such that $w\chi(w)$ is strictly
increasing (thus including the case $\chi(w)=w^{-\alpha}$, $0\leqslant
\alpha<1$) and $f\equiv0$. In this case the global existence of weak solutions
has been proved.

In \cite{guar} the authors considered the problem (\ref{eq1_gen}%
)-(\ref{ic2_gen}) in a more general form under Dirichlet conditions. Assuming
that a priori $L^{\infty}$ bounds are available they proved the local and
global existence of weak solutions.

Finally, we cite here the paper \cite{tello} where the author considers
instead of the equation (\ref{eq2_gen}) the following one%
\[
\frac{\partial w}{\partial t}=g(u,w)
\]
but under some hypotheses on $g$ that are not satisfied in the case we shall
consider in this paper (see also \cite{fontelos}, \cite{friedman}).

Our aim in this paper is to prove the global existence in time and uniqueness
of a classical H\"{o}lder continuous solution\ for the problem (\ref{eq1_gen}%
)-(\ref{ic2_gen}) when $\Omega\subset\mathbb{R}^{2}$ is a bounded domain with
smooth boundary $\partial\Omega$ and the reaction term is the logistic growing
function. Also the long time asymptotic behavior of the solution is
investigated. In order to simplify the presentation of the results, we shall
consider in what follows the case $\alpha=0$. The more general cases
$\beta\geqslant1$, $0\leqslant\alpha<1$\ (or even when the function $\chi$ is
a more general decreasing function) can be treated similarly, the estimations
being more tedious.

This paper is organized as follows. In Section \ref{notations} we review some
basic facts concerning the notations and terminology used through the paper
and we also give some auxiliary results. The proof of the local existence in
time and uniqueness of a classical solution is accomplished by applying a
fixed point argument in a suitably chosen function space and is presented in
Section \ref{local}.

In Section \ref{global_existence} we will be concerned with the global
existence in time of the classical solutions and for this we will begin by
establishing a priori bounds.

In Subsection \ref{lyap} we obtain a Lyapunov function for the system
(independent of the space dimension) by an analogous method as in
\cite{perthame2} (see also \cite{perthame3}, \cite{gaj}, \cite{horstmann0}).
We derive $L^{p}(\Omega)$ estimates independent on time in Subsection
\ref{estimat2}. After establishing a priori $L^{\infty}\left(  \Omega\right)
$ uniform bounds in Subsection \ref{estimat3}, we proceed to prove the
existence of global H\"{o}lder continuous solutions imposing that the initial
data are smooth enough.

Section \ref{asymptotic} is devoted to the study of the long time asymptotic
behavior of the solutions. More precisely, we prove that the solution
converges to a steady state of the system, exponentially if $\beta=1$ and at a
polynomial rate if $\beta>1$.

\section{Preliminaries}

\label{notations} \ 

\setcounter{equation}{0} \setcounter{figure}{0}

Hereafter we assume that $\Omega\subset\mathbb{R}^{N}$, $N\geqslant1$ is a
bounded domain with smooth boundary $\partial\Omega\in C^{l+2}\left(
\mathbb{R}^{N-1}\right)  $. Given $T\in(0,+\infty]$, we consider the
cylindrical domain denoted by $\Omega_{T}=\Omega\times(0,T)$ with lateral
surface $\partial\Omega_{T}=\partial\Omega\times(0,T)$.

We are using in this paper the standard notation of function spaces.
$L^{p}(\Omega)$ and $W^{m,p}(\Omega)$ with $1\leqslant p\leqslant\infty$,
$m\geqslant1$ are the Lebesgue spaces and respectively, Sobolev spaces of
functions on $\Omega$. For a general Banach space $X$, its norm is denoted by
$\left\Vert \cdot\right\Vert _{X}$. The space $L^{p}(0,T;X)$ is the Banach
space of all Bochner measurable functions $f:(0,T)\rightarrow X$ such that
$\left\Vert f\right\Vert _{X}\in L^{p}(0,T)$.

For a positive integer $n$ we consider the Banach space $W_{p}^{2n,n}%
(\Omega_{T})=\left\{  f;\text{ }D_{t}^{r}D_{x}^{s}f\in L^{p}(\Omega
_{T}),\text{ }2r+\left\vert s\right\vert \leqslant2n\right\}  $ together with
the norm
\[
\left\Vert f\right\Vert _{p,\Omega_{T}}^{\left(  2n\right)  }=\sum
_{0\leqslant2r+\left\vert s\right\vert \leqslant2n}\left\Vert D_{t}^{r}%
D_{x}^{s}f\right\Vert _{L^{p}(\Omega_{T})}.
\]

Given a non-integer positive number $0<l<1$, we denote by $C^{l+i,l/2+i/2}%
(\overline{\Omega}_{T})$, $i=1,2$ the H\"{o}lder space of exponents $l+i$ and
$l/2+i/2$ by respect to $x$, respectively $t$ of continuous and bounded
functions $\left\{  f(x,t)\right\}  $ defined on $\overline{\Omega}_{T}$,
provided with continuous and bounded derivatives $\left\{  D_{t}^{r}D_{x}%
^{s}f(x,t)\right\}  $ for $2r+\left\vert s\right\vert \leqslant i$. It is
endowed with the norm given by%
\begin{align*}
\left\vert f\right\vert _{\Omega_{T}}^{(l+i)}  &  :=\sum_{2r+\left\vert
s\right\vert =i}\left\langle D_{t}^{r}D_{x}^{s}f\right\rangle _{x,\Omega_{T}%
}^{(l)}+\sum_{\max\left\{  0,i-1\right\}  \leqslant2r+\left\vert s\right\vert
\leqslant i}\left\langle D_{t}^{r}D_{x}^{s}f\right\rangle _{t,\Omega_{T}%
}^{\left(  (l-2r-\left\vert s\right\vert )/2+i/2\right)  }+\\
&  +\sum_{0\leqslant2r+\left\vert s\right\vert \leqslant i}\max
\limits_{(x,t)\in\overline{\Omega}_{T}}\left\vert D_{t}^{r}D_{x}%
^{s}f\right\vert
\end{align*}
where
\[
\left\langle f\right\rangle _{x,\Omega_{T}}^{(l)}:=\sup
\limits_{\substack{(x,t),(x^{\prime},t)\in\overline{\Omega}_{T}\\\left\vert
x-x^{\prime}\right\vert \leqslant\rho_{0}}}\frac{\left\vert f(x,t)-f(x^{\prime
},t)\right\vert }{\left\vert x-x^{\prime}\right\vert ^{l}},\qquad\left\langle
f\right\rangle _{t,\Omega_{T}}^{(l/2)}:=\,\sup
\limits_{\substack{(x,t),(x,t^{\prime})\in\overline{\Omega}_{T}\\\left\vert
t-t^{\prime}\right\vert \leqslant\rho_{0}}}\frac{\left\vert
f(x,t)-f(x,t^{\prime})\right\vert }{\left\vert t-t^{\prime}\right\vert ^{l/2}%
}\,.
\]

This norm mentioned above depends on $\rho_{0}>0$, but changing this constant
leads to an equivalent norm.

Throughout this paper we denote by $C$, $C_{i}$ ($i=1,2,...$) positive
constants which are independent of time, but we shall indicate explicitly on
which other parameters they are dependent, if it will be the case. The
constants $C$ are not necessarily the same at different occurrences.

Some properties for the norms in the H\"{o}lder spaces which will be used
often in the next sections are given below. Since the proofs are standard, but
tedious, we omit the details.

\begin{lemma}
\label{lemma1}If $f(x,t)\in C^{l+2,l/2+1}(\overline{\Omega}_{T})$, $0<l<1$,
then\ we have:

\begin{itemize}
\item[(i)] $\frac{\partial f}{\partial t}\in C^{l,l/2}(\overline{\Omega}_{T})$,

\item[(ii)] $\frac{\partial f}{\partial x_{j}}\in C^{l+1,l/2+1/2}%
(\overline{\Omega}_{T})$, $j=1,...,N$,

\item[(iii)] $\Delta f\in C^{l,l/2}(\overline{\Omega}_{T})$,

\item[(iv)] $\frac{\partial f}{\partial\eta}\in C^{l+1,l/2+1/2}(\overline
{\Omega}_{T})$, where $\eta$ denotes the unit outward normal vector of
$\partial\Omega$.
\end{itemize}
\end{lemma}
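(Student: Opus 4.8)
The plan is to reduce each assertion to the definition of the H\"older norm $\left\vert f\right\vert _{\Omega_{T}}^{(l+2)}$ given above, observing that this norm by construction controls the sup-norms and the spatial/temporal H\"older seminorms of every derivative $D_t^r D_x^s f$ with $2r+|s|\leqslant 2$. The heart of the matter is purely bookkeeping: identify, for a given derivative operator applied to $f$, which terms in the definition of $\left\vert f\right\vert _{\Omega_{T}}^{(l+2)}$ dominate the norm of the image, and check that the exponents line up correctly with the target space.

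For (i): $\partial f/\partial t$ corresponds to $r=1$, $s=0$, so $2r+|s|=2$. Its sup-norm appears directly in the last sum of $\left\vert f\right\vert _{\Omega_{T}}^{(l+2)}$; its spatial $C^{l}$-seminorm appears in the first sum (the term $\langle D_t^r D_x^s f\rangle_{x,\Omega_T}^{(l)}$ with $2r+|s|=2$); and its temporal seminorm of order $(l-2)/2+1 = l/2$ appears in the second sum (the range $\max\{0,i-1\}\leqslant 2r+|s|\leqslant i$ with $i=2$ includes $2r+|s|=2$, giving exponent $l/2$). Hence $\partial f/\partial t\in C^{l,l/2}(\overline{\Omega}_T)$ with $\left\vert \partial_t f\right\vert_{\Omega_T}^{(l)}\leqslant \left\vert f\right\vert_{\Omega_T}^{(l+2)}$. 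For (iii), $\Delta f=\sum_j \partial^2 f/\partial x_j^2$ is a sum of $N$ terms each with $r=0$, $|s|=2$, so exactly the same exponent count applies to each summand, and one uses subadditivity of the seminorms over the sum.

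For (ii) and (iv): $\partial f/\partial x_j$ has $r=0$, $|s|=1$. To place it in $C^{l+1,l/2+1/2}(\overline{\Omega}_T)$ one must control its sup-norm (immediate from the last sum), the sup-norm and spatial $C^l$-seminorm of its first-order derivatives $\partial^2 f/\partial x_j\partial x_k$ (from the last sum and the first sum with $2r+|s|=2$), and the temporal $C^{l/2+1/2}$-seminorm of $\partial f/\partial x_j$ itself. The last one is exactly the second sum of $\left\vert f\right\vert_{\Omega_T}^{(l+2)}$ evaluated at $2r+|s|=1$ (which lies in the range $1\leqslant 2r+|s|\leqslant 2$), yielding temporal H\"older exponent $(l-1)/2+1 = l/2+1/2$, as required. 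The normal derivative $\partial f/\partial\eta = \nabla f\cdot\eta$ on $\partial\Omega_T$ is, since $\partial\Omega\in C^{l+2}$, a finite linear combination of the $\partial f/\partial x_j$ with coefficients the components of $\eta$, and these coefficients are restrictions of $C^{l+1}$ functions of the boundary chart; multiplying a $C^{l+1,l/2+1/2}$ function by a $C^{l+1}$ (time-independent) function keeps it in $C^{l+1,l/2+1/2}$, so (iv) follows from (ii) together with the standard product rule for H\"older seminorms and the smoothness of $\partial\Omega$.

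The only genuinely delicate point is (iv), where one must be careful that the local flattening of the boundary and the induced regularity $C^{l+1}$ of the components of $\eta$ are indeed enough, and that multiplication by such coefficients does not cost regularity --- this is where the hypothesis $\partial\Omega\in C^{l+2}$ is used. Everything else is a routine exponent-matching exercise, which is exactly why the authors say the proof is ``standard, but tedious'' and omit it. $\ \rule{0.5em}{0.5em}$
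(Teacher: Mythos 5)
The paper gives no proof of this lemma (the authors explicitly omit it as ``standard, but tedious''), and your exponent-bookkeeping reduction to the definition of $\left\vert f\right\vert _{\Omega_{T}}^{(l+2)}$, together with the chart/product argument for the normal derivative, is exactly that standard proof and is correct. The only item you left unlisted in (ii) is the temporal $C^{l/2}$-seminorm of the second derivatives $\frac{\partial^{2}f}{\partial x_{k}\partial x_{j}}$, which also enters the $C^{l+1,l/2+1/2}$-norm of $\frac{\partial f}{\partial x_{j}}$, but it is controlled by the same terms ($2r+\left\vert s\right\vert =2$) of $\left\vert f\right\vert _{\Omega_{T}}^{(l+2)}$, so this is a bookkeeping omission rather than a gap.
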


\begin{lemma}
\label{lemma6}If $f(x,t)\in C^{l+2,l/2+1}(\overline{\Omega}_{T})$, $0<l<1$,
then $H(x,t)=\int_{0}^{t}f(x,s)\,ds\in C^{l+2,l/2+1}(\overline{\Omega}_{T})$.
Moreover,%
\begin{equation}
\left\vert H\right\vert _{\Omega_{T}}^{(l+2)}\leqslant C\max\left\{
T,T^{\left(  1-l\right)  /2}\right\}  \left\vert f(x,t)\right\vert
_{\Omega_{T}}^{(l+2)}+\left\vert f(x,0)\right\vert _{\Omega_{T}}^{(l)}\text{.}
\label{Fintegral}%
\end{equation}

\end{lemma}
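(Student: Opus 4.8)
The plan is to bound, one by one, every term entering $\left|H\right|^{(l+2)}_{\Omega_T}$, using that $f\in C^{l+2,l/2+1}(\overline{\Omega}_T)$ has, by definition, continuous bounded derivatives $D_xf$, $D_x^2f$ and $\partial_tf$ on $\overline{\Omega}_T$. This regularity legitimizes differentiation under the integral sign, giving the explicit formulas $\partial_tH=f$, $D_xH(x,t)=\int_0^tD_xf(x,s)\,ds$ and $D_x^2H(x,t)=\int_0^tD_x^2f(x,s)\,ds$; note that $H$, $D_xH$ and $D_x^2H$ all vanish at $t=0$, whereas $\partial_tH(x,0)=f(x,0)$. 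The sup-norms are immediate: $\max_{\overline{\Omega}_T}|H|\le T\max_{\overline{\Omega}_T}|f|$, $\max_{\overline{\Omega}_T}|D_xH|\le T\max_{\overline{\Omega}_T}|D_xf|$ and $\max_{\overline{\Omega}_T}|D_x^2H|\le T\max_{\overline{\Omega}_T}|D_x^2f|$, each bounded by $T\,\left|f\right|^{(l+2)}_{\Omega_T}$. The only sup-norm not covered this way is $\max|\partial_tH|=\max_{\overline{\Omega}_T}|f|$; here I would write $f(x,t)=f(x,0)+\bigl(f(x,t)-f(x,0)\bigr)$ and use $|f(x,t)-f(x,0)|\le T\max|\partial_tf|$ to obtain $\max|\partial_tH|\le\max_x|f(x,0)|+T\,\left|f\right|^{(l+2)}_{\Omega_T}$.

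For the $x$-Hölder seminorms I would pull the spatial difference under the integral: $|D_x^2H(x,t)-D_x^2H(x',t)|\le t\,\langle D_x^2f\rangle^{(l)}_{x,\Omega_T}\,|x-x'|^l$, so $\langle D_x^2H\rangle^{(l)}_{x,\Omega_T}\le T\,\left|f\right|^{(l+2)}_{\Omega_T}$. For $\langle\partial_tH\rangle^{(l)}_{x,\Omega_T}=\langle f\rangle^{(l)}_{x,\Omega_T}$ I would again split off $f(\cdot,0)$: the part $f(\cdot,0)$ contributes exactly $\langle f(\cdot,0)\rangle^{(l)}_{x,\Omega_T}$, while for the remainder $f(\cdot,t)-f(\cdot,0)$ I would estimate the $x$-seminorm through its gradient, bounding $|D_xf(x,t)-D_xf(x,0)|$ by $T^{(l+1)/2}\langle D_xf\rangle^{((l+1)/2)}_{t,\Omega_T}$ and using $|x-x'|^{1-l}\le C(\Omega)$ to get a bound $C\,T^{(l+1)/2}\left|f\right|^{(l+2)}_{\Omega_T}$. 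At this point the two contributions carrying no positive power of $T$ are precisely $\max_x|f(x,0)|$ and $\langle f(\cdot,0)\rangle^{(l)}_{x,\Omega_T}$, whose sum is $\left|f(x,0)\right|^{(l)}_{\Omega_T}$ --- this is the source of that term in $(\ref{Fintegral})$.

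The $t$-Hölder seminorms are handled by one device in all three cases ($\langle f\rangle^{(l/2)}_{t,\Omega_T}$ for $\partial_tH$, $\langle D_xH\rangle^{((l+1)/2)}_{t,\Omega_T}$, $\langle D_x^2H\rangle^{(l/2)}_{t,\Omega_T}$): an increment of a primitive $\int_{t'}^t(\cdot)\,ds$ is at most $|t-t'|$ times a sup-norm, and then $|t-t'|=|t-t'|^{\gamma}|t-t'|^{1-\gamma}\le T^{1-\gamma}|t-t'|^{\gamma}$ with $\gamma$ the required exponent, producing the factors $T^{(2-l)/2}$, $T^{(1-l)/2}$ and $T^{(2-l)/2}$ respectively. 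Since $0<l<1$, every exponent $\theta$ occurring above lies in $\bigl[(1-l)/2,(2-l)/2\bigr]\subset(0,1)$, so $T^{\theta}\le T$ when $T\geqslant1$ and $T^{\theta}\le T^{(1-l)/2}$ when $T\leqslant1$, i.e. $T^{\theta}\le\max\{T,T^{(1-l)/2}\}$; hence every $T$-dependent contribution fits under the coefficient in $(\ref{Fintegral})$. Summing the finitely many terms yields the claim, with $C$ depending only on $N$, $l$, $\rho_0$ and $\Omega$. I expect no conceptual obstacle: the whole content is organizational --- keeping track of which pieces gain a power of $T$ upon integration from $0$, isolating the single pair of $t$-independent pieces $\max_x|f(x,0)|$ and $\langle f(\cdot,0)\rangle^{(l)}_{x,\Omega_T}$ that assemble into $\left|f(x,0)\right|^{(l)}_{\Omega_T}$, and, if one uses the $\rho_0$-restricted seminorms literally, the routine reduction to $\rho_0\geqslant T$ afforded by the equivalence of norms.
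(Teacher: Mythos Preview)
The paper does not actually prove this lemma: immediately before stating Lemmas~\ref{lemma1}--\ref{lemmaR} it says ``Since the proofs are standard, but tedious, we omit the details.'' So there is nothing to compare your argument against; you have supplied what the authors left out.

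Your argument is correct. One small inaccuracy that does not affect the conclusion: you assert that every exponent $\theta$ of $T$ that appears lies in $[(1-l)/2,(2-l)/2]$, but in fact the sup-norm estimates contribute the exponent $1$, and for $l>1/2$ one has $(l+1)/2>(2-l)/2$. What is true, and sufficient, is that every exponent lies in $[(1-l)/2,\,1]$; for any $\theta$ in this range one still has $T^{\theta}\leqslant T$ when $T\geqslant1$ and $T^{\theta}\leqslant T^{(1-l)/2}$ when $T\leqslant1$, so $T^{\theta}\leqslant\max\{T,T^{(1-l)/2}\}$ as claimed. With this minor correction your bookkeeping is complete and yields exactly the estimate~(\ref{Fintegral}), including the identification of $\max_{x}|f(x,0)|+\langle f(\cdot,0)\rangle^{(l)}_{x,\Omega_T}=|f(x,0)|^{(l)}_{\Omega_T}$ as the sole contribution carrying no positive power of $T$.
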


\begin{lemma}
\label{lemma11}If $f,g\in C^{l+i,l/2+i/2}(\overline{\Omega}_{T})$, $0<l<1$,
then $fg\in C^{l+i,l/2+i/2}(\overline{\Omega}_{T})$ and
\begin{equation}
\left\vert fg\right\vert _{\Omega_{T}}^{(l+i)}\leqslant C\left\vert
f\right\vert _{\Omega_{T}}^{(l+i)}\left\vert g\right\vert _{\Omega_{T}%
}^{(l+i)} \label{prod2}%
\end{equation}
for $i=0,1,2$.
\end{lemma}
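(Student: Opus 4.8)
The plan is to reduce the whole statement to the elementary fact that the product of two functions which are Hölder continuous in the parabolic sense is again Hölder continuous with the obvious bound, combined with the Leibniz formula and with the embeddings $C^{l+j,\,l/2+j/2}(\overline{\Omega}_T)\hookrightarrow C^{l+j',\,l/2+j'/2}(\overline{\Omega}_T)$ for $j'\le j$. These embeddings hold on the bounded cylinder with a constant depending only on $\rho_0$ and $\operatorname{diam}\Omega$, because a function whose derivative up to order $\le i$ is bounded is Lipschitz (in each of $x$ and $t$ separately) and hence Hölder of every exponent up to $1$ on a bounded smooth domain.

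First I would dispose of the case $i=0$. For the sup part one has $\max|fg|\le\max|f|\,\max|g|$. For the spatial seminorm, write
\[
f(x,t)g(x,t)-f(x',t)g(x',t)=f(x,t)\bigl(g(x,t)-g(x',t)\bigr)+g(x',t)\bigl(f(x,t)-f(x',t)\bigr),
\]
divide by $|x-x'|^{l}$ and take the supremum over $|x-x'|\le\rho_0$ to get $\langle fg\rangle_{x,\Omega_T}^{(l)}\le\max|f|\,\langle g\rangle_{x,\Omega_T}^{(l)}+\max|g|\,\langle f\rangle_{x,\Omega_T}^{(l)}$; the temporal seminorm is handled in exactly the same way. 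Adding the three contributions and bounding each $\max|\cdot|$ by the full norm gives $|fg|_{\Omega_T}^{(l)}\le C\,|f|_{\Omega_T}^{(l)}\,|g|_{\Omega_T}^{(l)}$.

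For $i=1$ and $i=2$ I would apply the Leibniz rule: for every $(r,s)$ with $2r+|s|\le i$,
\[
D_t^{r}D_x^{s}(fg)=\sum_{r'\le r,\ s'\le s}\binom{r}{r'}\binom{s}{s'}\,D_t^{r'}D_x^{s'}f\ \cdot\ D_t^{r-r'}D_x^{s-s'}g .
\]
Each summand is a product of a derivative of $f$ of parabolic order $2r'+|s'|$ and a derivative of $g$ of parabolic order $2(r-r')+|s-s'|$, whose sum is $2r+|s|\le i$; hence each factor carries at most $i$ derivatives and lies in the corresponding Hölder class. For whichever quantity of $D_t^{r}D_x^{s}(fg)$ enters the definition of $|fg|_{\Omega_T}^{(l+i)}$ — the spatial seminorm $\langle\cdot\rangle_{x}^{(l)}$ when $2r+|s|=i$, the temporal seminorm $\langle\cdot\rangle_{t}^{((l-2r-|s|)/2+i/2)}$ when $\max\{0,i-1\}\le2r+|s|\le i$, or the $\max$ — I apply to each summand the same add-and-subtract estimate as in the case $i=0$, estimating the factor carrying more derivatives in the relevant seminorm of $f$ (resp. $g$) and the other factor in sup-norm. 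Invoking the embeddings above to promote, where needed, a Lipschitz-in-$t$ bound on a lower-order derivative to the required Hölder-in-$t$ exponent, every resulting quantity is bounded by $|f|_{\Omega_T}^{(l+i)}$ resp. $|g|_{\Omega_T}^{(l+i)}$. Summing over the finitely many $(r,s)$ and over the finitely many Leibniz terms yields (\ref{prod2}).

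The only genuinely delicate bookkeeping — and the step I expect to be the main, though still routine, obstacle — is to check that for each Leibniz term the particular Hölder-in-time exponent $((l-2r-|s|)/2+i/2)$ demanded by the product is actually at our disposal: when the derivatives split unevenly between the two factors, the factor with fewer derivatives must be controlled in a strictly higher Hölder-in-time norm than appears literally in its own norm, which is legitimate precisely because that factor is still a derivative of order $\le i$ of a function in $C^{l+i,\,l/2+i/2}(\overline{\Omega}_T)$ and is therefore Hölder in $t$ of every exponent up to $1$ on $\overline{\Omega}_T$. Used consistently, this causes no loss, and the constant $C$ depends only on $l$, $i$, $N$, $\operatorname{diam}\Omega$ and $\rho_0$.
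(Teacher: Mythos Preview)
Your proof is correct and follows the standard approach; the paper itself omits the proof entirely, stating only that the result is ``standard, but tedious'', so there is nothing to compare against beyond confirming that your argument is the natural one. The add-and-subtract trick for $i=0$, the Leibniz expansion for $i=1,2$, and the use of the embeddings on the bounded cylinder to recover the intermediate temporal H\"older exponents are exactly the ingredients that make the proof routine, and you have identified correctly that the only point requiring care is matching the temporal exponent $((l-2r-|s|)/2+i/2)$ for each Leibniz summand --- which, as you note, is handled by the fact that a derivative of parabolic order strictly less than $i$ is Lipschitz (hence H\"older of every exponent $\le 1$) in the relevant variable.
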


\begin{lemma}
\label{lemmaR}(\cite{rascle2}, Lemma 1) Let $\varphi,\psi:\Omega
_{T}\rightarrow K\subset\mathbb{R}^{N}$, where $K$ is a compact in
$\mathbb{R}^{N}$, be two functions in $\left(  C^{l+2,l/2+1}(\overline{\Omega
}_{T})\right)  ^{N}$ and let $f\in C^{3}(K)$. Then $f\circ\varphi$ and
$f\circ\psi$ are in $C^{l+2,l/2+1}(\overline{\Omega}_{T})$\ and we have%
\begin{equation}
\left\vert f\circ\varphi-f\circ\psi\right\vert _{\Omega_{T}}^{(l+2)}%
\leqslant\Phi\left\Vert f\right\Vert _{C^{3}(K)}\left(  \left\vert
\varphi-\psi\right\vert _{\Omega_{T}}^{(l+2)}\right)  ^{\gamma} \label{R}%
\end{equation}
where $\gamma=\min\left\{  l/2,1-l\right\}  $ and $\Phi=\Phi(\left\vert
\varphi\right\vert _{\Omega_{T}}^{(l+2)},\left\vert \psi\right\vert
_{\Omega_{T}}^{(l+2)})$ is an increasing function on both its arguments.
\end{lemma}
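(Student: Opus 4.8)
The plan is to separate the statement into two parts: (a) that $f\circ\varphi$ and $f\circ\psi$ belong to $C^{l+2,l/2+1}(\overline\Omega_T)$, with $|f\circ\varphi|^{(l+2)}_{\Omega_T}\le\Phi_0\bigl(|\varphi|^{(l+2)}_{\Omega_T}\bigr)\|f\|_{C^3(K)}$ for some increasing $\Phi_0$ (and similarly for $\psi$); and (b) the quantitative estimate (\ref{R}). Throughout, $\Phi$ denotes a generic increasing function of $|\varphi|^{(l+2)}_{\Omega_T}$ and $|\psi|^{(l+2)}_{\Omega_T}$, not necessarily the same at each occurrence. For (a) I would differentiate by the chain rule; the only nonobvious point is the top order term
\[
D^2_{x_ix_j}(f\circ\varphi)=\sum_{k,m}(\partial^2_{km}f)(\varphi)\,D_{x_i}\varphi_k\,D_{x_j}\varphi_m+\sum_k(\partial_k f)(\varphi)\,D^2_{x_ix_j}\varphi_k
\]
and the analogous formula for $D_t(f\circ\varphi)$. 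Since $f\in C^3(K)$, the map $\partial^2 f$ is Lipschitz on $K$ with constant at most $\|f\|_{C^3(K)}$; as $\varphi$ is Lipschitz in both $x$ and $t$ (its first $x$-derivatives and its $t$-derivative being bounded), $(\partial^2_{km}f)\circ\varphi$ is Lipschitz on $\overline\Omega_T$, hence lies in $C^{l,l/2}(\overline\Omega_T)$, and likewise $(\partial_k f)\circ\varphi\in C^{l+1,l/2+1/2}(\overline\Omega_T)$. Placing the derivatives $D_x\varphi_k$, $D^2_x\varphi_k$, $D_t\varphi_k$ in the appropriate spaces via Lemma \ref{lemma1} and multiplying via Lemma \ref{lemma11} yields both the membership and the bound, with $\Phi_0$ obtained by inspecting constants.

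For (b), write $h=f\circ\varphi-f\circ\psi$, set $\zeta_\theta=\psi+\theta(\varphi-\psi)$, and use the identities $(\partial^j f)(\varphi)-(\partial^j f)(\psi)=\bigl(\int_0^1(\partial^{j+1}f)(\zeta_\theta)\,d\theta\bigr)\!\cdot\!(\varphi-\psi)$ for $j=0,1$. Differentiating $h$ up to second order in $x$ and first order in $t$ and collecting, every resulting term either contains an explicit factor among $\varphi-\psi$, $\nabla(\varphi-\psi)$, $\nabla^2(\varphi-\psi)$, $\partial_t(\varphi-\psi)$ multiplied by quantities whose Hölder norms are $\le\Phi\|f\|_{C^3(K)}$ (namely $(\partial^j f)(\psi)$, $\nabla\varphi$, $\nabla^2\varphi,\dots$), or it contains a factor $(\partial f)(\varphi)-(\partial f)(\psi)$, which by the identity with $j=1$ is again of the first type. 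By Lemma \ref{lemma11} each such term is bounded by $\Phi\|f\|_{C^3(K)}\,|\varphi-\psi|^{(l+2)}_{\Omega_T}$, i.e. linearly.

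The exception, and the hard part, is the term $\bigl[(\partial^2_{km}f)(\varphi)-(\partial^2_{km}f)(\psi)\bigr]D_{x_i}\varphi_k\,D_{x_j}\varphi_m$ inside $D^2_{x_ix_j}h$: here the identity would express the bracket through $\partial^3 f$, which is merely continuous (no quantitative modulus) since $f$ is only $C^3$, so it is useless. Instead I would treat $w:=(\partial^2_{km}f)(\varphi)-(\partial^2_{km}f)(\psi)$ through the two competing bounds $\|w\|_{L^\infty(\Omega_T)}\le\|f\|_{C^3(K)}\,|\varphi-\psi|^{(l+2)}_{\Omega_T}$ (which carries the smallness) and ``$w$ is Lipschitz in $x$ and in $t$ with constant $\le\Phi\|f\|_{C^3(K)}$'' (which does not), and interpolate between them:
\[
\langle w\rangle^{(l)}_{x,\Omega_T}\le C\,\|w\|_{L^\infty}^{\,1-l}\,\mathrm{Lip}(w)^{\,l},\qquad
\langle w\rangle^{(l/2)}_{t,\Omega_T}\le C\,\|w\|_{L^\infty}^{\,1-l/2}\,\mathrm{Lip}(w)^{\,l/2}.
\]
This bounds the seminorm of that term by $\Phi\|f\|_{C^3(K)}$ times $(|\varphi-\psi|^{(l+2)}_{\Omega_T})^{1-l}$, respectively $(|\varphi-\psi|^{(l+2)}_{\Omega_T})^{1-l/2}$; with only $C^3$ control of $f$ one cannot do better. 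Assembling all contributions, $|h|^{(l+2)}_{\Omega_T}\le\Phi\|f\|_{C^3(K)}$ times the least power of $|\varphi-\psi|^{(l+2)}_{\Omega_T}$ that appears, and after the elementary reconciliation of exponents — absorbing the range $|\varphi-\psi|^{(l+2)}_{\Omega_T}>1$ into the increasing function $\Phi$ — this gives (\ref{R}) with $\gamma=\min\{l/2,1-l\}$. I expect this interpolation step, forced by the limited regularity of $f$, to be the only genuine difficulty; the remainder is bookkeeping with Lemmas \ref{lemma1} and \ref{lemma11}.
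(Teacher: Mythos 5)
The paper itself offers no proof of Lemma \ref{lemmaR}: it is imported verbatim from Rascle (\cite{rascle2}, Lemma 1), and the neighbouring H\"older-algebra lemmas are likewise stated with proofs omitted, so there is nothing in the text to compare your argument against; I can only judge it on its own merits. On those merits it is sound and is the natural reconstruction: chain rule plus the product estimate of Lemma \ref{lemma11} give membership and handle every term carrying an explicit factor $\varphi-\psi$, $\nabla(\varphi-\psi)$, $\nabla^{2}(\varphi-\psi)$ or $\partial_{t}(\varphi-\psi)$; the identities $(\partial^{j}f)(\varphi)-(\partial^{j}f)(\psi)=\bigl(\int_{0}^{1}(\partial^{j+1}f)(\zeta_{\theta})\,d\theta\bigr)\cdot(\varphi-\psi)$ for $j=0,1$ reduce the first-derivative differences to that form; and you correctly identify the term $[(\partial^{2}f)(\varphi)-(\partial^{2}f)(\psi)]\,D_{x}\varphi\,D_{x}\varphi$ as the one that cannot be linearized because $f$ is only $C^{3}$, treating it by interpolating the H\"older seminorms between the $L^{\infty}$ bound (linear in $\left\vert \varphi-\psi\right\vert _{\Omega_{T}}^{(l+2)}$) and the uniform Lipschitz bound. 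Your bookkeeping yields exponents $1$, $1-l$ and $1-l/2$ on the various contributions when $\left\vert \varphi-\psi\right\vert _{\Omega_{T}}^{(l+2)}\leqslant1$; since each of these is at least $\min\{l/2,1-l\}$, and the range $\left\vert \varphi-\psi\right\vert _{\Omega_{T}}^{(l+2)}>1$ is absorbed using the a priori bound from part (a), the stated inequality (\ref{R}) follows (you in fact get a slightly sharper exponent than the one quoted, which is harmless). Two small points you should make explicit: both the segment $\zeta_{\theta}=\psi+\theta(\varphi-\psi)$ and the Lipschitz bound $|\partial^{2}f(a)-\partial^{2}f(b)|\leqslant\left\Vert f\right\Vert _{C^{3}(K)}|a-b|$ require the segment $[a,b]$ to lie in $K$, so you must either take $K$ convex or extend $f$ to a $C^{3}$ function on a convex neighbourhood of $K$ (harmless here, since in Section \ref{local} the lemma is applied to a power function on a compact interval of values); and it matters that the difference $(\partial f)(\varphi)-(\partial f)(\psi)$ appearing in $\nabla_{x}(f\circ\varphi-f\circ\psi)$ is handled by the $j=1$ identity rather than by interpolation, since interpolating there against the $(l+1)/2$ time seminorm would only give the exponent $(1-l)/2$, which falls below $\min\{l/2,1-l\}$ for $1/2<l<2/3$ --- your proposal does make the right choice, so this is a remark, not a gap.
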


The remaining of this section is devoted to some general results for the
existence of solutions for parabolic equations. We consider the problem:%
\begin{align}
&  \frac{\partial u}{\partial t}-\Delta u+\sum\limits_{i=1}^{N}a_{i}%
(x,t)\frac{\partial u}{\partial x_{i}}+a(x,t)u=F(x,t) &  &  \left(
x,t\right)  \in\Omega_{T}\label{Lu}\\
&  \frac{\partial u}{\partial\eta}-u\frac{\partial g}{\partial\eta
}(x,t)=G(x,t) &  &  \left(  x,t\right)  \in\partial\Omega_{T}\label{Lubc}\\
&  u(x,0)=u_{0}(x) &  &  x \in\Omega. \label{Lu0}%
\end{align}
Let us remark that, if we make the change of variables
$v(x,t)=u(x,t)e^{-g(x,t)}$ the system (\ref{Lu})-(\ref{Lu0}) becomes:%
\begin{align}
&  \frac{\partial v}{\partial t}-\Delta v+\sum\limits_{i=1}^{N}b_{i}%
(x,t)\frac{\partial v}{\partial x_{i}}+b(x,t)v=\widetilde{F}(x,t) &  &
\left(  x,t\right)  \in\Omega_{T}\label{Lv}\\
&  \frac{\partial v}{\partial\eta}=\widetilde{G}(x,t) &  &  \left(
x,t\right)  \in\partial\Omega_{T}\label{Lvbc}\\
&  v(x,0)=v_{0}(x) &  &  x \in\Omega\label{Lv0}%
\end{align}
where the coefficients are given by:%
\begin{align}
&  b_{i}(x,t)=a_{i}(x,t)-2\frac{\partial g}{\partial x_{i}}(x,t),\qquad
1\leqslant i\leqslant N\label{bx1}\\
&  b(x,t)=a(x,t)+\frac{\partial g}{\partial t}(x,t)-\Delta g+\sum
\limits_{i=1}^{N}a_{i}(x,t)\frac{\partial g}{\partial x_{i}}(x,t)-\sum
\limits_{i=1}^{N}\left(  \frac{\partial g}{\partial x_{i}}(x,t)\right)
^{2}\label{bx2}\\
&  \widetilde{F}(x,t)=F(x,t)e^{-g(x,t)}\label{F}\\
&  \widetilde{G}(x,t)=G(x,t)e^{-g(x,t)}\label{G}\\
&  v_{0}(x)=u_{0}(x)e^{-g(x,0)}. \label{vx}%
\end{align}

\begin{theorem}
\label{rascle}(\cite{rascle1}, Theorem II.2) Let $0<l<1$ and $\Omega
\subset\mathbb{R}^{N}$ be a bounded domain with the boundary $\partial
\Omega\in C^{l+2}$ and $0<T<\infty$. We suppose that the following hypotheses
are satisfied:

\begin{itemize}
\item the coefficients $b_{i}(x,t)$ ($1\leqslant i\leqslant N$), $b(x,t)$
belong to the space $C^{l,l/2}(\overline{\Omega}_{T})$;

\item $\widetilde{F}(x,t)\in C^{l,l/2}(\overline{\Omega}_{T})$, $\widetilde
{G}(x,t)\in C^{l+1,l/2+1/2}(\overline{\partial\Omega}_{T})$ and $v_{0}(x)\in
C^{l+2}(\overline{\Omega})$;

\item the compatibility condition $\frac{\partial v}{\partial\eta
}(x,0)=\widetilde{G}(x,0)$ is satisfied for every $x\in\partial\Omega$.
\end{itemize}

Then the problem (\ref{Lv})-(\ref{Lv0}) has a unique solution $v(x,t)\in
C^{l+2,l/2+1}(\overline{\Omega}_{T})$ which verifies%
\begin{equation}
\left\vert v\right\vert _{\Omega_{T}}^{(l+2)}\leqslant\Theta\left(  \left\vert
\widetilde{F}\right\vert _{\Omega_{T}}^{(l)}+\left\vert \widetilde
{G}\right\vert _{\partial\Omega_{T}}^{(l+1)}+\left\vert v_{0}\right\vert
_{\Omega}^{(l+2)}\right)  \label{depend_lady}%
\end{equation}
where $\Theta=\Theta(T,\mu(T))$ is an increasing function on $T$ and on the
quantity
\[
\mu(T)=\sum_{i=1}^{N}\left\vert b_{i}(x,t)\right\vert _{\Omega_{T}}%
^{(l)}+\left\vert b(x,t)\right\vert _{\Omega_{T}}^{(l)}\text{.}%
\]

\end{theorem}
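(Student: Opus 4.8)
The plan is to treat this as a classical linear parabolic Schauder problem with an oblique (Neumann) boundary condition: reduce to a model problem with homogeneous data, prove an a priori $C^{l+2,l/2+1}$ estimate, obtain existence by the method of continuity, and read off uniqueness from the estimate.

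First I would remove the inhomogeneous data. Since $v_{0}\in C^{l+2}(\overline{\Omega})$, put $V(x,t):=v_{0}(x)$; then $w:=v-V$ solves the same equation with right-hand side $\widetilde{F}+\Delta v_{0}-\sum_{i}b_{i}\,\partial_{i}v_{0}-b\,v_{0}\in C^{l,l/2}(\overline{\Omega}_{T})$ (by Lemma \ref{lemma11}), with zero initial datum and boundary datum $\widetilde{G}-\partial v_{0}/\partial\eta$, which vanishes at $t=0$ exactly because of the assumed compatibility condition. Next, using a bounded extension operator that lifts a function in $C^{l+1,l/2+1/2}(\overline{\partial\Omega}_{T})$ vanishing at $t=0$ to a function in $C^{l+2,l/2+1}(\overline{\Omega}_{T})$ vanishing at $t=0$ (constructed by flattening $\partial\Omega$ and using a half-space lifting, or from the Neumann heat semigroup), I subtract $W$ with $\partial W/\partial\eta=\widetilde{G}-\partial v_{0}/\partial\eta$ on the lateral boundary and $W(x,0)=0$. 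The remaining unknown $z:=w-W$ then satisfies an equation of the same type with homogeneous boundary and initial data and a right-hand side still in $C^{l,l/2}(\overline{\Omega}_{T})$, and all intermediate norms are bounded by the quantities on the right of (\ref{depend_lady}).

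For this reduced problem I would prove $|z|_{\Omega_{T}}^{(l+2)}\le C\,|Lz|_{\Omega_{T}}^{(l)}$ by localization: take a finite partition of unity subordinate to coordinate patches; estimate interior patches with the interior parabolic Schauder estimate and boundary patches by flattening $\partial\Omega$ (here $\partial\Omega\in C^{l+2}$ is used) and applying the half-space Schauder estimate for $\partial_{t}-\Delta$ with the Neumann condition. The lower-order terms $\sum_{i}a_{i}\,\partial_{i}(\cdot)+a(\cdot)$ are moved to the right-hand side and absorbed via the Hölder interpolation inequalities: $|\sum_{i}b_{i}\,\partial_{i}z+b z|_{\Omega_{T}}^{(l)}\le\varepsilon\,|z|_{\Omega_{T}}^{(l+2)}+C(\varepsilon,\mu)\,|z|_{\Omega_{T}}^{(0)}$, and on a short interval $[0,T_{0}]$ with $T_{0}=T_{0}(\mu)$ the term $|z|_{\Omega_{T}}^{(0)}$ is controlled by the (vanishing) data through the maximum principle. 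Iterating over $\lceil T/T_{0}\rceil$ subintervals yields an estimate constant that is increasing in $T$ and in $\mu(T)$, which is precisely the asserted form of $\Theta$.

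Finally, for the model operator $L_{0}=\partial_{t}-\Delta$ with homogeneous Neumann data and zero initial datum, solvability in $C^{l+2,l/2+1}(\overline{\Omega}_{T})$ is classical (heat potentials). Setting $L_{\tau}z=z_{t}-\Delta z+\tau\big(\sum_{i}b_{i}\,\partial_{i}z+b z\big)$ for $\tau\in[0,1]$, the uniform a priori estimate makes the set $\{\tau:L_{\tau}\text{ maps onto }C^{l,l/2}\}$ open and closed in $[0,1]$, hence equal to $[0,1]$; the choice $\tau=1$ gives $z$, and undoing the two reductions produces $v$ together with (\ref{depend_lady}). Uniqueness is immediate, since the difference of two solutions has zero data and the a priori estimate forces it to vanish. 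The main obstacle is the boundary Schauder estimate: correctly flattening $\partial\Omega$ while tracking how the $C^{l+2,l/2+1}$ norm of $z$ and the $C^{l,l/2}$ norms of the coefficients transform, and verifying that the stated compatibility condition is exactly what makes the corner $\{t=0\}\times\partial\Omega$ harmless.
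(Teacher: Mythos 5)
The paper does not actually prove this statement: Theorem \ref{rascle} is imported verbatim from \cite{rascle1} (Theorem II.2), which in turn rests on the linear parabolic Schauder theory of \cite{lady} (Chapter IV, Theorem 5.3); the authors only reprove the analogous dependence statement for Theorem \ref{lady} by the change of variables $v=ue^{-g}$. Your proposal, by contrast, reconstructs the classical proof behind that citation, and in outline it is correct: the reduction $v\mapsto v-v_{0}\mapsto v-v_{0}-W$ with a lifting $W$ of the Neumann datum (legitimate precisely because the compatibility condition makes $\widetilde{G}-\partial v_{0}/\partial\eta$ vanish at $t=0$), the localized interior and flattened-boundary Schauder estimates for $\partial_{t}-\Delta$, absorption of the lower-order terms by Hölder interpolation, the maximum principle plus time-stepping to remove $\left\vert z\right\vert _{\Omega_{T}}^{(0)}$, and the method of continuity from the Neumann heat operator is exactly the LSU scheme, and it does deliver a constant increasing in $T$ and in $\mu(T)$ as required. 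The only places that need care, and that your sketch compresses, are the standard ones: the existence and boundedness of the lifting operator from $C^{l+1,l/2+1/2}(\overline{\partial\Omega}_{T})$ (data vanishing at $t=0$) into $C^{l+2,l/2+1}(\overline{\Omega}_{T})$; the fact that at each restart time $nT_{0}$ the initial datum is no longer zero, so the iteration must use the estimate with inhomogeneous initial data (available since $z(\cdot,nT_{0})\in C^{l+2}(\overline{\Omega})$ and the corner compatibility is automatic for an existing $C^{l+2,l/2+1}$ solution); and the bookkeeping that the flattening and partition-of-unity constants depend only on $\Omega$ and $l$, so that $\Theta$ really is a function of $T$ and $\mu(T)$ alone. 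None of these is a gap in substance; what your route buys is a self-contained argument, while what the paper's route buys is brevity, delegating exactly these classical details to \cite{rascle1} and \cite{lady}.
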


\begin{theorem}
\label{lady}Let $0<l<1$ and $\Omega\subset\mathbb{R}^{N}$ be a bounded domain
with the boundary $\partial\Omega\in C^{l+2}$ and $0<T<\infty$. We suppose
that the following hypotheses are satisfied:

\begin{itemize}
\item the coefficients $a_{i}(x,t)$ ($1\leqslant i\leqslant N$), $a(x,t)$
belong to the space $C^{l,l/2}(\overline{\Omega}_{T})$;

\item $F(x,t)\in C^{l,l/2}(\overline{\Omega}_{T})$, $G(x,t)\in C^{l+1,l/2+1/2}%
(\overline{\partial\Omega}_{T})$, $g(x,t)\in C^{l+2,l/2+1}(\overline
{\partial\Omega}_{T})$ and $u_{0}(x)\in C^{l+2}(\overline{\Omega})$;

\item the compatibility condition $\frac{\partial u_{0}}{\partial\eta}%
-u_{0}\frac{\partial g}{\partial\eta}(x,0)=G(x,0)$ is satisfied for every
$x\in\partial\Omega$.
\end{itemize}

Then the problem (\ref{Lu})-(\ref{Lu0}) has a unique solution $u(x,t)\in
C^{l+2,l/2+1}(\overline{\Omega}_{T})$ which verifies%
\begin{equation}
\left\vert u\right\vert _{\Omega_{T}}^{(l+2)}\leqslant\Psi\left(  \left\vert
F\right\vert _{\Omega_{T}}^{(l)}+\left\vert G\right\vert _{\partial\Omega_{T}%
}^{(l+1)}+\left\vert u_{0}\right\vert _{\Omega}^{(l+2)}\right)
\label{depend_u}%
\end{equation}
where $\Psi=\Psi\left(  T,\left\vert g\right\vert _{\Omega_{T}}^{(l+2)}%
,\mu(T)\right)  $ is an increasing function in $T,$ in $\left\vert
g\right\vert _{\Omega_{T}}^{(l+2)}$ and in the quantity
\[
\mu(T)=\sum_{i=1}^{N}\left\vert b_{i}(x,t)\right\vert _{\Omega_{T}}%
^{(l)}+\left\vert b(x,t)\right\vert _{\Omega_{T}}^{(l)}%
\]
where $b_{i}(x,t)$ ($1\leqslant i\leqslant N$), $b(x,t)$ are given by
(\ref{bx1}), (\ref{bx2}).
\end{theorem}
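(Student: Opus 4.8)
The plan is to reduce Theorem~\ref{lady} to Theorem~\ref{rascle} by means of the substitution $v(x,t)=u(x,t)e^{-g(x,t)}$ already recorded in (\ref{Lv})--(\ref{vx}). First I would check that this change of variables is a bijection between the two problems: if $u\in C^{l+2,l/2+1}(\overline{\Omega}_T)$ solves (\ref{Lu})--(\ref{Lu0}), then a direct computation (differentiating $v=ue^{-g}$ in $t$ and twice in $x$) shows $v$ solves (\ref{Lv})--(\ref{Lv0}) with the coefficients and data given by (\ref{bx1})--(\ref{vx}); conversely, $u=ve^{g}$ recovers a solution of the original problem. In particular existence and uniqueness for (\ref{Lu})--(\ref{Lu0}) is equivalent to existence and uniqueness for (\ref{Lv})--(\ref{Lv0}), which is exactly what Theorem~\ref{rascle} supplies, provided I verify that its hypotheses hold for the transformed data.

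The bulk of the work is checking those hypotheses. By Lemma~\ref{lemma1}, since $g\in C^{l+2,l/2+1}(\overline{\Omega}_T)$ we have $\partial g/\partial x_i\in C^{l+1,l/2+1/2}\hookrightarrow C^{l,l/2}$, $\partial g/\partial t\in C^{l,l/2}$ and $\Delta g\in C^{l,l/2}$; combined with $a_i,a\in C^{l,l/2}$ and the product Lemma~\ref{lemma11}, the formulas (\ref{bx1}), (\ref{bx2}) give $b_i,b\in C^{l,l/2}(\overline{\Omega}_T)$. Since $e^{-g}\in C^{l+2,l/2+1}(\overline{\Omega}_T)$ (apply Lemma~\ref{lemmaR} with the $C^3$ function $s\mapsto e^{-s}$ on a compact interval containing the range of $g$, or directly), Lemma~\ref{lemma11} gives $\widetilde F=Fe^{-g}\in C^{l,l/2}$, $\widetilde G=Ge^{-g}\in C^{l+1,l/2+1/2}$ on $\partial\Omega_T$, and $v_0=u_0e^{-g(\cdot,0)}\in C^{l+2}(\overline{\Omega})$. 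Finally the compatibility condition $\partial v/\partial\eta(x,0)=\widetilde G(x,0)$ is equivalent, after expanding $\partial v/\partial\eta=e^{-g}(\partial u/\partial\eta-u\,\partial g/\partial\eta)$ at $t=0$, to the assumed condition $\partial u_0/\partial\eta-u_0\,\partial g/\partial\eta(x,0)=G(x,0)$. Thus Theorem~\ref{rascle} applies and yields a unique $v\in C^{l+2,l/2+1}(\overline{\Omega}_T)$ with the bound (\ref{depend_lady}); setting $u=ve^{g}$ gives the desired unique solution.

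It remains to derive the estimate (\ref{depend_u}) from (\ref{depend_lady}). Here I would use Lemma~\ref{lemma11} to pass the exponential factors through the Hölder norms: $|u|_{\Omega_T}^{(l+2)}=|ve^{g}|_{\Omega_T}^{(l+2)}\leqslant C\,|e^{g}|_{\Omega_T}^{(l+2)}\,|v|_{\Omega_T}^{(l+2)}$, and on the right-hand side $|\widetilde F|^{(l)}\leqslant C|e^{-g}|^{(l)}|F|^{(l)}$, and similarly for $\widetilde G$ and $v_0$. Since $|e^{\pm g}|_{\Omega_T}^{(l+2)}$ is controlled by an increasing function of $|g|_{\Omega_T}^{(l+2)}$ (again via Lemma~\ref{lemmaR}), and $\mu(T)$ is itself an increasing function of $T$, $|g|_{\Omega_T}^{(l+2)}$ and the $C^{l,l/2}$ norms of $a_i,a$, absorbing all these constants into a single increasing function $\Psi=\Psi(T,|g|_{\Omega_T}^{(l+2)},\mu(T))$ gives (\ref{depend_u}). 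The only mildly delicate point is bookkeeping: one must make sure every constant that appears genuinely depends only on the allowed quantities and, crucially, that the time-dependence enters only through $T$ and $\mu(T)$ in the stated monotone way, which follows because all the Hölder-norm manipulations above introduce constants depending at worst on $T$ (through Lemma~\ref{lemma6}-type estimates) and on $|g|_{\Omega_T}^{(l+2)}$. I do not expect any real obstacle here; the proof is essentially a transcription of the change-of-variables computation plus routine applications of Lemmas~\ref{lemma1}, \ref{lemma11} and \ref{lemmaR}.
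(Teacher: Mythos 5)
Your proposal is correct and follows essentially the same route as the paper: the authors also pass to $v=ue^{-g}$, invoke the linear theory (they cite Lady\v{z}enskaja, Ch.~IV, Thm.~5.3, for existence and uniqueness, which is the content of Theorem~\ref{rascle} for the transformed problem), and obtain (\ref{depend_u}) exactly as you do, by writing $\left\vert u\right\vert _{\Omega_{T}}^{(l+2)}\leqslant C\left\vert v\right\vert _{\Omega_{T}}^{(l+2)}\left\vert e^{g}\right\vert _{\Omega_{T}}^{(l+2)}$ and combining (\ref{depend_lady}) with Lemmas~\ref{lemma11} and \ref{lemmaR}. Your verification of the hypotheses of Theorem~\ref{rascle} (coefficients, data, compatibility) is the routine part the paper leaves implicit, and it is carried out correctly.
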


\begin{proof}
The existence and the uniqueness of the solution is proved in \cite{lady},
Chapter IV, Theorem 5.3. The only thing that we want to point out is the
increasing dependence of the function $\Psi$\ on its arguments.

From Lemma \ref{lemma11} we obtain%
\[
\left\vert u(x,t)\right\vert _{\Omega_{T}}^{(l+2)}=\left\vert v(x,t)e^{g(x,t)}%
\right\vert _{\Omega_{T}}^{(l+2)}\leqslant C\left\vert v(x,t)\right\vert
_{\Omega_{T}}^{(l+2)}\left\vert e^{g(x,t)}\right\vert _{\Omega_{T}}^{(l+2)}.
\]
Now, taking into account (\ref{depend_lady}) and Lemma \ref{lemmaR}, we obtain
immediately the relation (\ref{depend_u}).
\end{proof}

\section{Local existence in time and uniqueness of classical solutions}

\label{local} \ 

\setcounter{equation}{0} \setcounter{figure}{0}

As we have already mentioned in Introduction, in order to simplify the
presentation of the results, we consider the system (\ref{eq1_gen}%
)-(\ref{bc_gen}) when $\alpha=0$. We consider, without loss of generality, the
normalized system, which means $a=b=k=1$, with the growing source term, more
precisely%
\begin{align}
&  \frac{\partial u}{\partial t}=\Delta u-\nabla\cdot(u\nabla w)+\delta u(1-u)
& x  &  \in\Omega,\quad t\in\mathbb{R}_{+}\label{eq1}\\
&  \frac{\partial w}{\partial t}=-w^{\beta}u & x  &  \in\Omega,\quad
t\in\mathbb{R}_{+}\label{eq2}\\
&  \frac{\partial u}{\partial\eta}-u\frac{\partial w}{\partial\eta}=0 & x  &
\in\partial\Omega,\quad t\in\mathbb{R}_{+}\label{bc}\\
&  u(x,0)=u_{0}(x)\geqslant0 & x  &  \in\Omega\label{ic1}\\
&  w(x,0)=w_{0}(x)>0 & x  &  \in\Omega\label{ic2}%
\end{align}
where $\delta\geqslant0$ and $\beta\geqslant1$.

\begin{remark}
In what follows the computations are made for $\beta>1$. The same results are
true also for $\beta=1$, but the estimates will be different. We have
considered the growing source term, nevertheless the results are valid also in
the case of more general functions satisfying appropriate conditions.
\end{remark}

The arguments given in this Section are similar to those of Rascle
\cite{rascle1}, \cite{rascle2}. Because in our case the boundary condition is
different and the function $f$ does not satisfy the condition (\ref{f}), we
briefly give the proof for the local existence for the sake of completeness.

Let us remark that, if $\beta>1$, we can rewrite the initial problem
(\ref{eq1})-(\ref{ic2}):%
\begin{align}
&  \frac{\partial u}{\partial t}=\nabla\left(  \nabla u-u\cdot\nabla\left[
\left(  w_{0}^{1-\beta}+U\right)  ^{\frac{1}{1-\beta}}\right]  \right)
+\delta u\left(  1-\frac{1}{\beta-1}\frac{\partial U}{\partial t}\right)  & x
&  \in\Omega,\quad t\in\mathbb{R}_{+}\label{u1}\\
&  \frac{\partial u}{\partial\eta}=u\frac{\partial}{\partial\eta}\left[
\left(  w_{0}^{1-\beta}+U\right)  ^{\frac{1}{1-\beta}}\right]  & x  &
\in\partial\Omega,\quad t\in\mathbb{R}_{+}\label{u2}\\
&  u(x,0) =u_{0}(x) & x  &  \in\Omega\label{u3}\\
&  U=\left(  \beta-1\right)  \int\limits_{0}^{t}u(x,s)ds & x  &  \in
\Omega,\quad t\in\mathbb{R}_{+}\label{U1}\\
&  U(x,0)=0. & x  &  \in\Omega\label{U2}%
\end{align}

We consider now the following linear problem in the variable $u$%
\begin{align}
&  \frac{\partial u}{\partial t}=\Delta u-\sum\limits_{i=1}^{N}a_{i}%
(x,t)\frac{\partial u}{\partial x_{i}}-a(x,t)u & x  &  \in\Omega,\quad
t\in\mathbb{R}_{+}\label{p1}\\
&  \frac{\partial u}{\partial\eta}=u\frac{\partial g}{\partial\eta} & x  &
\in\partial\Omega,\quad t\in\mathbb{R}_{+}\label{p2}\\
&  u(x,0)=u_{0}(x)\geqslant0 & x  &  \in\Omega\label{p3}%
\end{align}
where the coefficients are given by%
\begin{equation}
g(x,t)=\left(  w_{0}^{1-\beta}(x)+\phi(x,t)\right)  ^{\frac{1}{1-\beta}%
},\qquad a_{i}(x,t)=\frac{\partial g}{\partial x_{i}},\qquad a(x,t)=\Delta
g-\delta\left(  1+g^{-\beta}\frac{\partial g}{\partial t}\right)  .
\label{coef_g}%
\end{equation}

\begin{theorem}
\label{local existence}Let $0<l<1$, $\Omega\subset\mathbb{R}^{N}$ be a domain
with $C^{l+2}$ boundary $\partial\Omega$ and $0<T<\infty$. We suppose that the
following hypotheses are satisfied:

\begin{itemize}
\item $\phi\in C^{l+2,l/2+1}(\overline{\Omega}_{T})$, $w_{0}\in C^{l+2}%
(\overline{\Omega})$, $u_{0}\in C^{l+2}(\overline{\Omega})$;

\item the compatibility condition $\frac{\partial u_{0}}{\partial\eta
}(x)=u_{0}(x)\frac{\partial g}{\partial\eta}(x,0)$ is satisfied for every
$x\in\partial\Omega$.
\end{itemize}

Then the problem (\ref{p1})-(\ref{p3}) has a unique nonnegative solution
$u(x,t)\in C^{l+2,l/2+1}(\overline{\Omega}_{T})$ which verifies%
\begin{equation}
\left\vert u\right\vert _{\Omega_{T}}^{(l+2)}\leqslant\Psi\left\vert
u_{0}\right\vert _{\Omega}^{(l+2)} \label{local_u}%
\end{equation}
where $\Psi=\Psi\left(  T,\left\vert g\right\vert _{\Omega_{T}}^{(l+2)}%
,\mu(T)\right)  $ is an increasing function in $T,$ in $\left\vert
g\right\vert _{\Omega_{T}}^{(l+2)}$ and in the quantity%
\begin{equation}
\mu(T)=\sum\limits_{i=1}^{N}\left\vert a_{i}\right\vert _{\Omega_{T}}%
^{(l)}+\left\vert \frac{\partial g}{\partial t}-\delta\left(  1+g^{-\beta
}\frac{\partial g}{\partial t}\right)  \right\vert _{\Omega_{T}}^{(l)}.
\label{th}%
\end{equation}

\end{theorem}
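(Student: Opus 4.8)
The plan is to recognize that the problem \eqref{p1}--\eqref{p3} is precisely of the form \eqref{Lu}--\eqref{Lu0} covered by Theorem \ref{lady}, so the bulk of the work is checking that the coefficients defined in \eqref{coef_g} satisfy the required regularity, and then obtaining nonnegativity of the solution by a maximum-principle argument. First I would verify the hypotheses of Theorem \ref{lady}. Since $\phi\in C^{l+2,l/2+1}(\overline{\Omega}_T)$ and $w_0\in C^{l+2}(\overline{\Omega})$, the function $w_0^{1-\beta}+\phi$ lies in $C^{l+2,l/2+1}(\overline{\Omega}_T)$; provided it stays in a compact subset of $(0,\infty)$ (which holds on a possibly shorter time interval since $w_0>0$ and $\phi(x,0)=0$ when $\phi$ comes from \eqref{U1}, or more simply is part of the hypotheses via $\phi$ being given), composition with the $C^3$ map $s\mapsto s^{1/(1-\beta)}$ and Lemma \ref{lemmaR} give $g\in C^{l+2,l/2+1}(\overline{\Omega}_T)$. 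Then Lemma \ref{lemma1} yields $a_i=\partial g/\partial x_i\in C^{l+1,l/2+1/2}\subset C^{l,l/2}$, $\Delta g\in C^{l,l/2}$, and $\partial g/\partial t\in C^{l,l/2}$; since $g$ is bounded away from zero, $g^{-\beta}$ is again a $C^3$-composition, so by Lemmas \ref{lemmaR} and \ref{lemma11} the product $g^{-\beta}\,\partial g/\partial t\in C^{l,l/2}$ and hence $a=\Delta g-\delta(1+g^{-\beta}\partial g/\partial t)\in C^{l,l/2}(\overline{\Omega}_T)$. Here $F\equiv 0$, $G\equiv 0$, and the compatibility condition in Theorem \ref{lady} becomes exactly the assumed $\partial u_0/\partial\eta = u_0\,\partial g/\partial\eta(x,0)$. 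Theorem \ref{lady} then delivers a unique $u\in C^{l+2,l/2+1}(\overline{\Omega}_T)$ satisfying \eqref{depend_u} with $F=G=0$, which is \eqref{local_u}, and the stated monotone dependence of $\Psi$ is inherited directly from Theorem \ref{lady}; the quantity $\mu(T)$ in \eqref{th} is just $\mu(T)$ from Theorem \ref{lady} rewritten using $b_i = a_i - 2\partial g/\partial x_i$ and the explicit form of $b$, with the $\Delta g$ and first-order terms absorbed into $\left\vert g\right\vert^{(l+2)}_{\Omega_T}$-dependence.

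The remaining point is nonnegativity. I would argue that $u$ satisfies a linear parabolic equation $\partial_t u - \Delta u + \sum_i a_i \partial_{x_i} u + a u = 0$ with the oblique boundary condition $\partial u/\partial\eta - u\,\partial g/\partial\eta = 0$ and nonnegative initial data $u_0\ge 0$. Passing to $v = u e^{-g}$ as in \eqref{Lv}--\eqref{Lv0} converts this to $\partial_t v - \Delta v + \sum_i b_i \partial_{x_i} v + b v = 0$ with homogeneous Neumann condition $\partial v/\partial\eta = 0$ and $v_0 = u_0 e^{-g(\cdot,0)} \ge 0$. The weak maximum principle for this uniformly parabolic operator with Hölder-continuous (hence bounded) coefficients and Neumann boundary condition forces $v\ge 0$ on $\overline{\Omega}_T$ — e.g. test against $v^-$, or invoke the standard parabolic comparison principle, noting that the zeroth-order coefficient $b$ being bounded is harmless after the substitution $v\mapsto v e^{-\lambda t}$ with $\lambda$ large. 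Since $e^{-g}>0$, this gives $u = v e^{g}\ge 0$, completing the proof.

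The main obstacle I expect is the bookkeeping needed to ensure $g$ and $g^{-\beta}$ (and $\partial g/\partial t$) genuinely land in the right Hölder classes with the right monotone dependence: one must keep track that the bound on $g$ from below (so that the $C^3$ functions $s\mapsto s^{1/(1-\beta)}$ and $s\mapsto s^{-\beta}$ are applied on a fixed compact away from $0$) and from above are available, which is why this is a \emph{local} statement and why $\Psi$ must be allowed to depend on $\left\vert g\right\vert^{(l+2)}_{\Omega_T}$; the precise form of $\mu(T)$ in \eqref{th} then requires matching the generic $b_i,b$ of Theorem \ref{lady} against \eqref{coef_g}, but since all the discarded terms ($\Delta g$, the $(\partial g/\partial x_i)^2$ terms, the first-order terms) are controlled by $\left\vert g\right\vert^{(l+2)}_{\Omega_T}$ via Lemmas \ref{lemma1} and \ref{lemma11}, they can be moved into that argument of $\Psi$. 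Everything else is a routine application of the cited results.
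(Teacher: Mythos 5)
Your proposal is correct and follows essentially the same route as the paper: check via Lemmas \ref{lemma1}, \ref{lemma11}, \ref{lemmaR} that $g$, $a_i$, $a$ from (\ref{coef_g}) lie in the required H\"older classes, apply Theorem \ref{lady} (with $F\equiv G\equiv 0$) to get existence, uniqueness and the estimate (\ref{local_u}), and obtain nonnegativity from the maximum principle. The paper's proof is just a terser version of this; your extra care about $w_0^{1-\beta}+\phi$ staying away from zero and the explicit reduction to the Neumann problem for $v=ue^{-g}$ are sound elaborations of steps the paper leaves implicit.
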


\begin{proof}
Taking into account the properties of the norm in H\"{o}lder spaces (see Lemma
\ref{lemma1}, Lemma \ref{lemmaR}), we have%
\[
a_{i}(x,t)\in C^{l,l/2}(\overline{\Omega}_{T}),\quad a(x,t)\in C^{l,l/2}%
(\overline{\Omega}_{T}),\quad g(x,t)\in C^{l+2,l/2+1}(\overline{\Omega}%
_{T}),\qquad i=1,...,N
\]
so by Theorem \ref{lady} we obtain that the problem (\ref{p1})-(\ref{p3}) has
a unique solution $u(x,t)\in C^{l+2,l/2+1}(\overline{\Omega}_{T})$. Moreover,
taking into account (\ref{coef_g}), this solution verifies (\ref{local_u}).

The nonnegativity of the solution follows from the maximum principle.
\end{proof}

We shall prove now the local existence of the solution for the problem
(3.1)-(3.5) using a fixed point argument. We consider the set%
\begin{equation}
X(T,\sigma)=\left\{  \phi\in C^{l+2,l/2+1}(\overline{\Omega_{T}}%
);\quad\left\vert \phi\right\vert _{\Omega_{T}}^{(l+2)}\leqslant\sigma
,\quad\phi(.,0)=0,\text{\quad}\phi\geqslant0\text{ and }\frac{\partial\phi
}{\partial t}\geqslant0\text{ in }\Omega_{T}\right\}  \label{X}%
\end{equation}
where $\sigma$ is a positive constant. We define the following operators%
\[
S:X\rightarrow C^{l+2,l/2+1}(\overline{\Omega}_{T}),\qquad S(\phi)=u,
\]
where $u$ is the unique solution of the problem (\ref{p1})-(\ref{p3}), and%
\[
R:C^{l+2,l/2+1}(\overline{\Omega_{T}})\rightarrow C^{l+2,l/2+1}(\overline
{\Omega}_{T}),\qquad R(u)=U,
\]
where $U$ is given by the relation (\ref{U1}).

Let us observe that, in order to find a solution of the problem (\ref{u1}%
)-(\ref{U2}), it is enough to find a fixed point for the application
\[
R\circ S:X\rightarrow C^{l+2,l/2+1}(\overline{\Omega}_{T}),\qquad\left(
R\circ S\right)  (\phi)=R(u)=U.
\]

\begin{theorem}
\label{contraction}Let $0<l<1$, $\Omega\subset\mathbb{R}^{N}$ be a domain with
$C^{l+2}$ boundary $\partial\Omega$. We assume that the hypotheses of Theorem
\ref{local existence}\ are satisfied and, moreover, we suppose that
$\left\vert u_{0}\right\vert _{\Omega}^{(l)}<\sigma/\left(  2\left(
\beta-1\right)  \right)  $. Then for every $\zeta>0$ there exists $T_{1}>0$
such that, for all $\tau\in(0,T_{1}]$ the following properties are true:

\begin{itemize}
\item[(i)] the closed convex set $X(\tau,\sigma)$ is invariant by $R\circ S$;

\item[(ii)] the operator $R\circ S$ satisfies the following inequality in
$X(\tau,\sigma)$ with respect to the norm $\left\vert \cdot\right\vert
_{\Omega_{\tau}}^{(l+2)}$:%
\begin{equation}
\left\vert \left(  R\circ S\right)  (\phi)-\left(  R\circ S\right)
(\psi)\right\vert _{\Omega_{\tau}}^{(l+2)}\leqslant\zeta\left(  \left\vert
\phi-\psi\right\vert _{\Omega_{\tau}}^{(l+2)}\right)  ^{\gamma}
\label{U_ineqcontr}%
\end{equation}
where $\gamma=\min\left\{  l/2,1-l\right\}  $. Therefore, $R\circ S$ has a
unique fixed point $\phi$ in $X(\tau,\sigma)$.
\end{itemize}
\end{theorem}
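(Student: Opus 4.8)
\textbf{Proof plan for Theorem~\ref{contraction}.}
The plan is to run a standard Banach-type fixed-point scheme on the map $R\circ S$, where the only real subtlety is that the self-map property and the contraction-type estimate~\eqref{U_ineqcontr} must be made to hold on a \emph{small} time interval $(0,\tau]$, with all constants controlled uniformly. First I would establish (i): given $\phi\in X(\tau,\sigma)$, Theorem~\ref{local existence} produces $u=S(\phi)\ge 0$ with
$\left\vert u\right\vert_{\Omega_\tau}^{(l+2)}\leqslant \Psi\left\vert u_0\right\vert_\Omega^{(l+2)}$, where $\Psi=\Psi(\tau,\left\vert g\right\vert_{\Omega_\tau}^{(l+2)},\mu(\tau))$. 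Since $\phi\in X(\tau,\sigma)$ bounds $\left\vert\phi\right\vert_{\Omega_\tau}^{(l+2)}$ by $\sigma$, Lemma~\ref{lemmaR} applied to the nonlinearity $z\mapsto (w_0^{1-\beta}+z)^{1/(1-\beta)}$ on a suitable compact $K$ (note $\phi\ge 0$ keeps the argument away from the singularity, using $w_0>0$ on $\overline\Omega$) bounds $\left\vert g\right\vert_{\Omega_\tau}^{(l+2)}$ and hence $\mu(\tau)$ by a constant depending only on $\sigma$, $w_0$, $\delta$, $\beta$; together with $\tau\le 1$ this yields $\Psi\le \Psi_0$, a constant independent of $\tau\in(0,1]$. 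Then $U=R(u)=(\beta-1)\int_0^t u(x,s)\,ds$, and Lemma~\ref{lemma6} gives $\left\vert U\right\vert_{\Omega_\tau}^{(l+2)}\leqslant C(\beta-1)\max\{\tau,\tau^{(1-l)/2}\}\Psi_0\left\vert u_0\right\vert_\Omega^{(l+2)} + (\beta-1)\left\vert u_0\right\vert_\Omega^{(l)}$; the hypothesis $\left\vert u_0\right\vert_\Omega^{(l)}<\sigma/(2(\beta-1))$ makes the last term $<\sigma/2$, and choosing $\tau$ small forces the first term $<\sigma/2$, so $\left\vert U\right\vert_{\Omega_\tau}^{(l+2)}\le\sigma$. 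The structural constraints $U(\cdot,0)=0$, $U\ge 0$, $\partial U/\partial t=(\beta-1)u\ge 0$ hold because $u\ge 0$, so $U\in X(\tau,\sigma)$.

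For (ii) I would estimate $U_1-U_2=R(S(\phi_1))-R(S(\phi_2))$. Write $u_i=S(\phi_i)$ and $g_i$ for the corresponding coefficient in~\eqref{coef_g}. The difference $u_1-u_2$ solves a linear parabolic problem of the type~\eqref{Lu}--\eqref{Lu0} with zero initial data, whose source and boundary terms are built from the differences of the coefficients $a_i,a,g$ associated with $\phi_1$ and $\phi_2$; by Theorem~\ref{lady} (or Theorem~\ref{rascle}) one gets
$\left\vert u_1-u_2\right\vert_{\Omega_\tau}^{(l+2)}\le \Psi_0\left(\left\vert a_i^{(1)}-a_i^{(2)}\right\vert_{\Omega_\tau}^{(l)}+\cdots\right)$, and each coefficient difference is controlled by $\left\vert g_1-g_2\right\vert_{\Omega_\tau}^{(l+2)}$ via Lemma~\ref{lemma1}, Lemma~\ref{lemma11}. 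Lemma~\ref{lemmaR} applied to the same map $z\mapsto (w_0^{1-\beta}+z)^{1/(1-\beta)}$ gives $\left\vert g_1-g_2\right\vert_{\Omega_\tau}^{(l+2)}\le \Phi_0\left(\left\vert\phi_1-\phi_2\right\vert_{\Omega_\tau}^{(l+2)}\right)^{\gamma}$ with $\gamma=\min\{l/2,1-l\}$ and $\Phi_0$ depending only on $\sigma$. Finally Lemma~\ref{lemma6} applied to $u_1-u_2$ contributes another factor $\max\{\tau,\tau^{(1-l)/2}\}$, so altogether
$\left\vert U_1-U_2\right\vert_{\Omega_\tau}^{(l+2)}\le C\max\{\tau,\tau^{(1-l)/2}\}\left(\left\vert\phi_1-\phi_2\right\vert_{\Omega_\tau}^{(l+2)}\right)^{\gamma}$, and shrinking $\tau$ makes the prefactor $\le\zeta$, giving~\eqref{U_ineqcontr}.

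To conclude the existence and uniqueness of a fixed point, I would note that $X(\tau,\sigma)$ is a closed convex (indeed bounded) subset of the Banach space $C^{l+2,l/2+1}(\overline\Omega_\tau)$, and $R\circ S$ maps it into itself by (i). Since $\gamma<1$ in general, \eqref{U_ineqcontr} is not a genuine Lipschitz contraction in the $\left\vert\cdot\right\vert^{(l+2)}$ norm, so a direct appeal to Banach's theorem is not available; instead I would argue as in Rascle~\cite{rascle2}: iterate, showing that the sequence $\phi_{n+1}=(R\circ S)(\phi_n)$ is Cauchy because $\left(\left\vert\phi_{n+1}-\phi_n\right\vert^{(l+2)}\right)$ decays super-geometrically (each step raises the previous difference to the power $\gamma<1$ but multiplies by a prefactor that can be taken $<1$, and on a bounded set the iterates stay in a region where the exponent $\gamma$ still produces contraction of the differences after finitely many steps), or alternatively compose $R\circ S$ with itself enough times to get a genuine contraction; uniqueness then also follows from~\eqref{U_ineqcontr}. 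The main obstacle, and the step deserving the most care, is the uniformity in $\tau$ of all the constants ($\Psi_0$, $\Phi_0$, and the compact set $K$ on which Lemma~\ref{lemmaR} is applied): one must check that shrinking $\tau$ never inflates these, which rests on the a priori bound $\left\vert\phi\right\vert^{(l+2)}_{\Omega_\tau}\le\sigma$ together with $w_0>0$ keeping $(w_0^{1-\beta}+\phi)$ bounded below away from $0$.
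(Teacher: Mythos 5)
Your proposal follows essentially the same route as the paper: invariance of $X(\tau,\sigma)$ via Theorem \ref{local existence}, Lemma \ref{lemma6} and Lemma \ref{lemmaR} (with the hypothesis on $\left\vert u_{0}\right\vert _{\Omega}^{(l)}$ absorbing the $\sigma/2$ term and small $\tau$ the rest), then the difference estimate for $u-\overline{u}$ through the linear theory of Theorem \ref{lady} with the coefficient differences controlled by Lemma \ref{lemmaR}, giving the $\gamma$-power bound with a prefactor $\max\{\tau,\tau^{(1-l)/2}\}$ made $\leqslant\zeta$, and finally the iteration $U_{n+1}=(R\circ S)(U_{n})$ exactly as in the paper. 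One minor caveat: your fallback idea of composing $R\circ S$ with itself cannot produce a genuine contraction (it only lowers the H\"{o}lder exponent to $\gamma^{2}$), but your primary argument --- iterating and invoking the Cauchy property as in Rascle --- is precisely what the paper does.
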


\begin{proof}
$(i)$ Because $u(x,t)$ is the unique solution of the problem (\ref{p1}%
)-(\ref{p3}) and taking into account Lemma \ref{lemma6}, Theorem
\ref{local existence} and the relation (\ref{local_u}), for every
$0<\tau\leqslant T$, we obtain%
\begin{align}
\left\vert \left(  R\circ S\right)  (\phi)\right\vert _{\Omega_{\tau}%
}^{(l+2)}  &  \leqslant C\left(  \beta-1\right)  \max\left\{  \tau^{\left(
1-l\right)  /2},\tau\right\}  \Psi\left(  \tau,\left\vert g(x,t)\right\vert
_{\Omega_{\tau}}^{(l+2)},\mu(\tau)\right)  \left\vert u_{0}\right\vert
_{\Omega}^{(l+2)}+\nonumber\\
&  +\left(  \beta-1\right)  \left\vert u_{0}\right\vert _{\Omega_{\tau}}%
^{(l)}. \label{U_second}%
\end{align}
where $C$ is a constant independent on $\tau$. Now, in order to estimate the
function $\Psi\left(  \tau,\left\vert g\right\vert _{\Omega_{\tau}}%
^{(l+2)},\mu(\tau)\right)  $ which appears in (\ref{U_second}), we estimate
first the norm $\left\vert g\right\vert _{\Omega_{\tau}}^{(l+2)}$ using Lemma
\ref{lemmaR}
\begin{equation}
\left\vert g\right\vert _{\Omega_{\tau}}^{(l+2)}\leqslant C_{1}\left(
\left\vert w_{0}^{1-\beta}(x)\right\vert _{\Omega_{\tau}}^{(l+2)}%
+\sigma\right)  ^{\gamma} \label{g}%
\end{equation}
where $C_{1}=C_{1}(\left\vert w_{0}(x)\right\vert _{\Omega_{\tau}}%
^{(l+2)},\sigma)$ and $\gamma=\min\left\{  l/2,1-l\right\}  $. Taking into
account Lemma \ref{lemma1} and (\ref{g}) we obtain
\begin{equation}
\mu(\tau)\leqslant C_{2}+\frac{\delta}{\beta-1}\sigma+\delta\label{iu}%
\end{equation}
where $C_{2}=C_{2}\left(  \left\vert w_{0}(x)\right\vert _{\Omega_{\tau}%
}^{(l+2)},\sigma\right)  $.

From Theorem \ref{local existence} we know that the function $\Psi$ is
increasing on $\tau$, $\left\vert g\right\vert _{\Omega_{\tau}}^{(l+2)}$ and
$\mu(\tau)$, so we obtain\ from (\ref{g}) and (\ref{iu}) for $0<\tau\leqslant
T$
\begin{equation}
\Psi\left(  \tau,\left\vert g(x,t)\right\vert _{\Omega_{\tau}}^{(l+2)}%
,\mu(\tau)\right)  \leqslant\Psi\left(  \tau,C_{1}\left(  \left\vert
w_{0}^{1-\beta}(x)\right\vert _{\Omega_{\tau}}^{(l+2)}+\sigma\right)
^{\gamma},C_{2}+\frac{\delta}{\beta-1}\sigma+\delta\right)  =:\Lambda(\sigma).
\label{second}%
\end{equation}
Finally, from (\ref{U_second}), we obtain
\[
\left\vert \left(  R\circ S\right)  (\phi)\right\vert _{\Omega_{\tau}}%
^{(l+2)}<C\left(  \beta-1\right)  \max\left\{  \tau^{\left(  1-l\right)
/2},\tau\right\}  \Lambda(\sigma)\left\vert u_{0}\right\vert _{\Omega}%
^{(l+2)}+\frac{1}{2}\sigma.
\]
It follows that for $\tau>0$ sufficiently small $X(\tau,\sigma)$ is invariant
by $R\circ S$. Let $T_{2}>0$ be sufficiently small, such that, for all
$0<\tau\leqslant T_{2}$, $X(\tau,\sigma)$ is invariant by $R\circ S$.

$(ii)$ Let $\phi,\overline{\phi}\in X(T_{2},\sigma)$ and%
\[
U=R\left(  u\right)  =\left(  R\circ S\right)  \left(  \phi\right)
,\qquad\overline{U}=R\left(  \overline{u}\right)  =\left(  R\circ S\right)
\left(  \overline{\phi}\right)  .
\]
It is easy to see that the function $z=u-\overline{u}$ satisfies the problem%
\begin{align*}
&  \frac{\partial z}{\partial t}=\Delta z-\sum\limits_{i=1}^{N}a_{i}%
(x,t)\frac{\partial z}{\partial x_{i}}-a(x,t)z+\overline{F}(x,t) & x  &
\in\Omega,\quad t\in\mathbb{R}_{+}\\
&  \frac{\partial z}{\partial\eta}=z\frac{\partial g}{\partial\eta}%
+\overline{G}(x,t) & x  &  \in\partial\Omega,\quad t\in\mathbb{R}_{+}\\
&  z(x,0)=0 & x  &  \in\Omega
\end{align*}
where $g(x,t)$ is given by (\ref{coef_g}), $\overline{g}(x,t)=\left(
w_{0}^{1-\beta}(x)+\overline{\phi}(x,t)\right)  ^{\frac{1}{1-\beta}}$ and
\[
\overline{F}(x,t)=\nabla\left(  \overline{u}\cdot\nabla\left(  g-\overline
{g}\right)  \right)  +\frac{\delta}{\beta-1}\overline{u}\frac{\partial
}{\partial t}\left(  \phi-\overline{\phi}\right)  ,\qquad\overline
{G}(x,t)=\overline{u}\frac{\partial}{\partial\eta}\left[  g-\overline
{g}\right]  .
\]
Let us notice that $\overline{G}(x,0)=0$, so the function $z(x,t)=\left(
u-\overline{u}\right)  (x,t)$ satisfies the compatibility condition
$\frac{\partial z}{\partial\eta}(x,0)-z(x,0)\frac{\partial g}{\partial\eta
}=G(x,0)$. We obtain, taking into account Theorem \ref{lady}%
\[
\left\vert \left(  R\circ S\right)  (\phi)-\left(  R\circ S\right)
(\overline{\phi})\right\vert _{\Omega_{\tau}}^{(l+2)}\leqslant C_{3}\left(
\beta-1\right)  \max\left\{  \tau^{\left(  1-l\right)  /2},\tau\right\}
\Psi\left(  \sigma\right)  \left\vert u_{0}\right\vert _{\Omega_{\tau}%
}^{(l+2)}\left(  \left\vert \phi-\overline{\phi}\right\vert _{\Omega_{\tau}%
}^{(l+2)}\right)  ^{\gamma}%
\]
where $\gamma=\min\left\{  l/2,1-l\right\}  $ and $C_{3}=C_{3}(\sigma)$. By
taking $\tau$ sufficiently small the inequality (\ref{U_ineqcontr}) follows.
We choose now $T_{1}<T_{2}$ such that $(i)$ and $(ii)$ are fulfilled for all
$\tau\in(0,T_{1}]$.

We define now the following two sequences%
\[
u_{n}=S(U_{n}),\qquad U_{n+1}=R(u_{n})=(R\circ S)(U_{n})
\]
where $U_{0}=0$. It follows from the above considerations that $(U_{n}%
)_{n\in\mathbb{N}}$ is a Cauchy sequence, so it converges to an element $U$,
which is a fixed point of $R\circ S$. The inequality (\ref{U_ineqcontr})
implies the uniqueness of this fixed point.
\end{proof}

The continuity of the application $S$ implies that the sequence $(u_{n}%
)_{n\in\mathbb{N}}$ converges to $u=S(U)$. It is easy to see that $(u,U)$ is
the unique solution of the problem (\ref{u1})-(\ref{U2}) on the interval
$[0,T_{0}]$.

\begin{theorem}
Let $0<l<1$, $\Omega\subset\mathbb{R}^{N}$ be a domain with $C^{l+2}$ boundary
$\partial\Omega$. Given an initial value $\left(  u_{0},w_{0}\right)
\in\left(  C^{l+2}(\overline{\Omega})\right)  ^{2}$, $u_{0}\geqslant0$,
$w_{0}>0$ and if the compatibility condition $\frac{\partial u_{0}}%
{\partial\eta}=u_{0}\frac{\partial w_{0}}{\partial\eta}$ is satisfied for
every $x\in\partial\Omega$, then the problem (\ref{eq1})-(\ref{ic2}) has a
unique nonnegative solution $\left(  u,w\right)  $ defined on an interval
$[0,T)\subset\mathbb{R}$ and $\left(  u,w\right)  \in\left(  C^{l+2,l/2+1}%
(\overline{\Omega}_{t})\right)  ^{2}$, for all $t\in\lbrack0,T)$.
\end{theorem}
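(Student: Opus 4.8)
The plan is to assemble the local existence theorem for the full coupled system $(\ref{eq1})$--$(\ref{ic2})$ out of the fixed-point machinery already established, distinguishing the two cases $\beta>1$ and $\beta=1$. For $\beta>1$: Theorem \ref{contraction} provides a unique fixed point $U\in X(T_{1},\sigma)$ of $R\circ S$ on a short time interval $[0,T_{1}]$, provided the data satisfy the hypotheses of Theorem \ref{local existence} with $\phi$ taken from $X(T_{1},\sigma)$ and the smallness condition $|u_{0}|^{(l)}_{\Omega}<\sigma/(2(\beta-1))$. The first step is therefore to fix $\sigma$ large enough that this smallness condition holds for the given $u_{0}$; this is possible since $u_{0}\in C^{l+2}(\overline{\Omega})$ is a fixed function and $\sigma$ is free. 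One also needs the compatibility condition of Theorem \ref{local existence}, namely $\partial u_{0}/\partial\eta = u_{0}\,\partial g/\partial\eta(\cdot,0)$; but since $\phi(\cdot,0)=0$ for $\phi\in X$, formula $(\ref{coef_g})$ gives $g(x,0)=(w_{0}^{1-\beta}(x))^{1/(1-\beta)}=w_{0}(x)$, so this reduces exactly to the hypothesis $\partial u_{0}/\partial\eta=u_{0}\,\partial w_{0}/\partial\eta$ assumed in the statement. Thus the hypotheses of Theorem \ref{contraction} are met and we obtain $U$, then set $u=S(U)$ and recover $w$ from $U$ via $w=\bigl(w_{0}^{1-\beta}+U\bigr)^{1/(1-\beta)}$, which is exactly the substitution used to pass from $(\ref{eq1})$--$(\ref{ic2})$ to $(\ref{u1})$--$(\ref{U2})$.

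The second step is to verify that $(u,w)$ so constructed genuinely solves $(\ref{eq1})$--$(\ref{ic2})$ with the claimed regularity, and is unique. Regularity of $u$ is immediate: $u=S(U)\in C^{l+2,l/2+1}(\overline{\Omega}_{T_{1}})$ by Theorem \ref{local existence}. For $w$: since $U\in C^{l+2,l/2+1}$ and $w_{0}^{1-\beta}\in C^{l+2}$, and since $w_{0}>0$ on the compact $\overline{\Omega}$ means $w_{0}^{1-\beta}+U$ stays in a compact subset of $(0,\infty)$ on a possibly further-shortened interval (here we use $U(\cdot,0)=0$ and continuity to keep $w_{0}^{1-\beta}+U$ bounded away from $0$ for small $t$), Lemma \ref{lemmaR} applied with $f(s)=s^{1/(1-\beta)}\in C^{3}$ on that compact gives $w\in C^{l+2,l/2+1}(\overline{\Omega}_{T})$. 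Differentiating the algebraic relation and using $(\ref{U1})$, $\partial_t U=(\beta-1)u$, one checks directly that $\partial_t w=-w^{\beta}u$, recovering $(\ref{eq2})$ and $(\ref{ic2})$; substituting back into $(\ref{u1})$--$(\ref{u2})$ recovers $(\ref{eq1})$ and $(\ref{bc})$. Uniqueness: any classical solution $(u,w)$ of $(\ref{eq1})$--$(\ref{ic2})$ yields, by the same change of variables, a fixed point $U=(\beta-1)\int_{0}^{t}u\,ds$ of $R\circ S$ lying in $X(\tau,\sigma)$ for suitable $\sigma$ and small $\tau$ (nonnegativity of $u$ from the maximum principle gives $U\geq0$ and $\partial_t U\geq0$), so the uniqueness clause of Theorem \ref{contraction} forces agreement on a short interval; a standard connectedness/continuation argument extends this to the whole common interval of existence, and one takes $T$ to be the maximal such existence time.

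For the case $\beta=1$ the substitution $(\ref{u1})$--$(\ref{U2})$ degenerates, but as noted in the Remark after $(\ref{ic2})$ the analogue works with $w=w_{0}\exp(-U)$, $U=\int_0^t u\,ds$ (so that $\partial_t w=-wu$), and equation $(\ref{eq1})$ becomes a linear problem for $u$ with drift coefficient $\nabla(\log w_0 - U)$ and the logistic term now contributing $\delta u(1-u)=\delta u(1-\partial_t U)$ — the zeroth-order coefficient $a(x,t)$ in $(\ref{p1})$ being modified accordingly. The operators $S$ and $R$ are redefined with this $g(x,t)=-\log w_0(x)+\phi(x,t)$ (note $g(\cdot,0)=-\log w_0$, still in $C^{l+2}$), and the proofs of Theorems \ref{local existence} and \ref{contraction} go through verbatim with the same estimates from Lemmas \ref{lemma1}, \ref{lemma6}, \ref{lemma11}, \ref{lemmaR} and Theorem \ref{lady}; the compatibility condition again reduces to $\partial u_0/\partial\eta=u_0\,\partial w_0/\partial\eta$ since $\partial g/\partial\eta(\cdot,0)=-\partial(\log w_0)/\partial\eta$ and one rewrites the boundary condition $(\ref{bc})$ using $\partial w/\partial\eta = w\,\partial(\log w)/\partial\eta$ evaluated at $t=0$ where $w=w_0$. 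I expect the main obstacle to be the bookkeeping around keeping $w_0^{1-\beta}+U$ (respectively $w_0 e^{-U}$) in a fixed compact subset of $(0,\infty)$ so that Lemma \ref{lemmaR} applies with a $C^3$ nonlinearity — this is where one may need to shrink the time interval beyond $T_1$, and it is also the point where the hypothesis $w_0>0$ on all of $\overline{\Omega}$ (hence $\min_{\overline\Omega} w_0>0$ by compactness) is essential — but all the analytic content is already packaged in the cited lemmas, so this amounts to an elementary continuity argument rather than new estimates.
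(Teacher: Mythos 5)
For $\beta>1$ your argument is essentially the paper's own proof: existence and uniqueness on a short interval come from the fixed point of $R\circ S$ supplied by Theorem \ref{contraction} (with $\sigma$ chosen large relative to $\left\vert u_{0}\right\vert _{\Omega}^{(l)}$, and the compatibility condition reducing to $\partial u_{0}/\partial\eta=u_{0}\,\partial w_{0}/\partial\eta$ because $g(\cdot,0)=w_{0}$), $w$ is recovered from $U$ through $w=\bigl(w_{0}^{1-\beta}+U\bigr)^{1/(1-\beta)}$ (regularity by Lemma \ref{lemmaR}; here your worry about shrinking the interval is superfluous, since $U\geqslant0$ keeps $w_{0}^{1-\beta}+U$ bounded below by $\min_{\overline\Omega}w_{0}^{1-\beta}>0$), nonnegativity follows from the maximum principle through $u=S(U)$, and uniqueness plus the extension to a maximal interval $[0,T)$ follow by viewing any classical solution locally as a fixed point of a map analogous to $R\circ S$ and iterating, exactly as in the paper.

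The one genuine defect is in your treatment of $\beta=1$, which the paper only dispatches in a remark but which is part of the statement. You take $g(x,t)=-\log w_{0}(x)+\phi(x,t)$, i.e.\ $g=-\log w$. With that choice the frozen linear problem (\ref{p1})--(\ref{p3}) has drift $\nabla g=-\nabla w/w$ and boundary condition $\partial u/\partial\eta=u\,\partial g/\partial\eta=-(u/w)\,\partial w/\partial\eta$, neither of which is what the system (\ref{eq1})--(\ref{bc}) requires: since $\alpha=0$, the chemotactic potential entering both the divergence term $-\nabla\cdot(u\nabla w)$ and the boundary condition $\partial u/\partial\eta=u\,\partial w/\partial\eta$ is $w$ itself, not $-\log w$. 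In particular your claimed reduction of the compatibility condition fails (at $t=0$ one gets $\partial u_{0}/\partial\eta=-(u_{0}/w_{0})\,\partial w_{0}/\partial\eta$, not $u_{0}\,\partial w_{0}/\partial\eta$), and a fixed point of the scheme built on this $g$ would solve a different PDE, so the $\beta=1$ branch as written does not prove the theorem. The repair is straightforward and restores the exact analogy with (\ref{coef_g}): set $g(x,t)=w_{0}(x)e^{-\phi(x,t)}$, so that $g=w$, $g(\cdot,0)=w_{0}$ (whence the stated compatibility condition), $\nabla g=\nabla w$, $\partial u/\partial\eta=u\,\partial g/\partial\eta$ coincides with (\ref{bc}), $g\in C^{l+2,l/2+1}(\overline{\Omega}_{T})$ by Lemma \ref{lemmaR}, and the zeroth-order coefficient becomes $\Delta g-\delta\bigl(1+g^{-1}\partial_{t}g\bigr)$; with this choice the fixed-point argument of Theorems \ref{local existence} and \ref{contraction} indeed carries over.
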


\begin{proof}
Theorem \ref{contraction} implies the existence and the uniqueness of the
solution of the problem (\ref{eq1})-(\ref{ic2}) on $\overline{\Omega}%
\times\left[  0,\tau_{1}\right]  $ with $\tau_{1}$ sufficiently small. By
iterating the argument above, we can extend this solution on an interval
$\left[  \tau_{1},\tau_{2}\right]  $ and so on. At each step the conditions
$(i)$ and $(ii)$ in Theorem \ref{contraction} must be fulfilled and this
imposes restrictions on the length of the interval of existence. We emphasize
that this length depends continuously on the initial data, fact that will be
used in the next section for proving the global existence in time of the
solution. We obtain in such a way a solution defined in an interval
$[0,T)\subset\mathbb{R}$, $0<T\leqslant\infty$. The nonnegativity of the
solution results from the maximum principle.

In order to prove the uniqueness of the solution, it is enough to notice that
each classical solution of the problem (\ref{eq1})-(\ref{ic2}) can be
regarded, locally, as a fixed point of a map analogue to $R\circ S$. The
uniqueness of such a fixed point implies the uniqueness of the solution.
\end{proof}

\section{Global existence in time}

\label{global_existence} \ 

\setcounter{equation}{0} \setcounter{figure}{0}

In this Section we prove that the smooth solution of the problem
(\ref{eq1})-(\ref{ic2}) considered in a bounded domain $\Omega\subset
\mathbb{R}^{2}$ is globally defined in time. In order to do this, first we
derive some a priori estimates which then enable us to prove uniform
upper-bound for $\left\vert u\right\vert _{\Omega_{T}}^{(l+2)}$. Hereafter,
$T$ denotes the maximal existence time of the classical nonnegative solution
$(u,w)$ to (\ref{eq1})-(\ref{ic2}) obtained in Section \ref{local}%
\ corresponding to initial value $\left(  u_{0},w_{0}\right)  \in\left(
C^{l+2}(\overline{\Omega})\right)  ^{2}$.

The main result of this Section is:

\begin{theorem}
\label{globalex}Let $0<l<1$, and $\Omega\subset\mathbb{R}^{2}$ be a domain
with $C^{l+2}$ boundary $\partial\Omega$. Given an initial pair of functions
$\left(  u_{0},w_{0}\right)  \in\left(  C^{l+2}(\overline{\Omega})\right)
^{2}$, there exists a global in time nonnegative solution $(u,w)\in\left(
C^{l+2,l/2+1}(\overline{\Omega\times\lbrack0,\infty\lbrack})\right)  ^{2}$ to
the problem (\ref{eq1})-(\ref{ic2}).
\end{theorem}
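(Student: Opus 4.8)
The plan is to extend the local solution obtained in Section \ref{local} to a global one by establishing, on any finite time interval $[0,T)$ on which the solution exists, an a priori bound on $\left\vert u\right\vert_{\Omega_T}^{(l+2)}$ that does not blow up as $T$ approaches the maximal existence time. Since Theorem \ref{contraction} shows that the length of the interval of existence depends continuously on the initial data (through the norms $\left\vert u_0\right\vert_\Omega^{(l+2)}$ and $\left\vert w_0\right\vert_\Omega^{(l+2)}$), such a bound will preclude finite-time breakdown by a standard continuation argument: if $T<\infty$ were maximal, one could restart the solution at a time close to $T$ with uniformly controlled data and extend past $T$, a contradiction.

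First I would record the elementary structural facts. From equation (\ref{eq2}), $w$ is nonincreasing in $t$ and stays positive, so $0<w(x,t)\leqslant w_0(x)\leqslant\|w_0\|_{L^\infty}$; moreover integrating (\ref{eq2}) gives pointwise control of $\int_0^t w^\beta u\,ds$ in terms of $w_0$. Next I would derive the energy/Lyapunov estimates announced in the introduction (Subsections \ref{lyap}, \ref{estimat2}): the total mass $\int_\Omega u\,dx$ is controlled by the logistic term, and multiplying (\ref{eq1}) by suitable functions (e.g. $\log u$, or powers of $u$, together with the coupling to $w$ via the substitution $\chi(w)\nabla w = \nabla(\log w)$ when $\alpha=0$) produces a functional of the form $\int_\Omega u\log u\,dx + (\text{terms in }w)$ whose time derivative is sign-definite up to lower-order terms. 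This yields time-uniform $L^p(\Omega)$ bounds on $u$ for all finite $p$, in dimension $N=2$ — this is exactly where the two-dimensionality is used, through the logarithmic Sobolev / Trudinger–Moser inequality, just as in the classical Keller–Segel analysis. From the $L^p$ bounds one bootstraps to a uniform $L^\infty(\Omega\times[0,T))$ bound on $u$ (Subsection \ref{estimat3}), using parabolic regularity for (\ref{eq1}) viewed as a linear equation with the drift $\nabla w/w^{?}$ controlled via the explicit formula $w = (w_0^{1-\beta}+U)^{1/(1-\beta)}$ and the bound on $U=(\beta-1)\int_0^t u$.

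With $\|u\|_{L^\infty(\Omega_T)}$ and $\|w\|_{L^\infty}$, $\inf w$ under control uniformly in $T$, I would then upgrade to H\"older bounds: apply De Giorgi–Nash–Moser type results to get $u\in C^{l,l/2}$ uniformly, which via (\ref{eq2}) and Lemma \ref{lemma6} forces $w$, hence the coefficients $g$, $a_i$, $a$ in (\ref{coef_g}), into the requisite H\"older classes with norms bounded uniformly in $T$; then Theorem \ref{lady} (or Theorem \ref{local existence}) promotes $u$ to a uniform $C^{l+2,l/2+1}$ bound, and one more application of Lemma \ref{lemmaR}/Lemma \ref{lemma6} does the same for $w$. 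The main obstacle is the step from mass and entropy control to the time-uniform $L^p$ estimates: one must handle the chemotactic drift $\nabla\cdot(u\nabla w)$ carefully, exploiting the degeneracy (that $w$ only decreases and the drift is a gradient of a function computable from $U$) together with the logistic absorption term $-\delta u^2$, and invoke the sharp two-dimensional functional inequality — getting the constants to close, uniformly in $T$ rather than merely locally, is the delicate part and the reason the argument is restricted to $N=2$.
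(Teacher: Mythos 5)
Your overall skeleton (continuation argument plus a priori estimates: mass, entropy/Lyapunov functional, time-uniform $L^{p}$ bounds via a two-dimensional functional inequality, then $L^{\infty}$, then H\"older regularity) is the same as the paper's, but two of the key steps are asserted by mechanisms that do not work as stated. First, for the passage from $L^{p}$ to $L^{\infty}$ you propose to treat (\ref{eq1}) as a linear parabolic equation with drift ``controlled via the explicit formula $w=(w_{0}^{1-\beta}+U)^{1/(1-\beta)}$ and the bound on $U=(\beta-1)\int_{0}^{t}u$''. That formula controls $w$ itself, but the drift in (\ref{eq1}) is $\nabla w$, and $\nabla U=(\beta-1)\int_{0}^{t}\nabla u\,ds$ is not controlled by any $L^{p}$ or $L^{\infty}$ bound on $u$; no gradient estimate is available at that stage. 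The paper avoids this entirely: after the change of variables $v=ue^{-w}$ (which you do not use, and which also removes the coupling from the boundary condition), the $L^{p}$ and $L^{\infty}$ bounds are obtained by testing with $pv_{k}^{p-1}e^{w}$ and $e^{w}(v-k)_{+}$, where the $e^{w}$ weight makes the $\nabla v\cdot\nabla w$ term harmless, the coefficient of the gradient term is made negative by choosing $k$ large (Proposition \ref{P1}, i.e.\ the entropy bound), and the $L^{\infty}$ bound follows from a Stampacchia-type level-set argument (Lemma 4.1 of \cite{fang}), not from linear parabolic regularity with a drift.

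Second, and more seriously, the step ``apply De Giorgi--Nash--Moser to get $u\in C^{l,l/2}$ uniformly, which via (\ref{eq2}) and Lemma \ref{lemma6} forces $w$ and the coefficients into the requisite H\"older classes'' has a circularity you do not break: H\"older (or even $W^{2,1}_{p}$) regularity for (\ref{eq1}) requires the flux coefficient $\nabla w$ in a strong enough space, while regularity of $\nabla w$ and $D^{2}w$ requires, through (\ref{eq2}), space-time integrability of $\nabla u$ and $\Delta u$. This is exactly what Subsection \ref{estimat4} is for: energy estimates with test function $e^{w}v_{t}$ give $\|v_{t}\|_{L^{2}(\Omega_{T})}$ and $\|\nabla v\|_{L^{2}(\Omega)}$, then an iteration over time intervals of length $T_{0}$ gives $\|\Delta v\|_{L^{1}(0,t;L^{2})}$, hence $\|\nabla v\|_{L^{1}(0,t;L^{p})}$ and $\sup_{t}\|\nabla w\|_{L^{p}}$, and only then can the linear $L^{p}$ parabolic theory (\cite[Ch.~IV, Thm.~9.1]{lady}) be applied to get $v\in W_{p}^{2,1}\hookrightarrow C^{2-4/p,1-2/p}$, with the final lift to $C^{l+2,l/2+1}$ done by the successive-amelioration Lemma \ref{Rasclelem} of Rascle. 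Your proposal assumes this whole chain away. (Two smaller points: for $\alpha=0$ one has $\chi(w)\nabla w=\nabla w$, not $\nabla(\log w)$; and the higher-order bounds need not, and in the paper do not, hold uniformly in $T$ --- finiteness on finite intervals suffices for the continuation argument, which is fortunate since the bound in Lemma \ref{p3.} grows like $e(n+1)!\,k(T_{0})$.)
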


We start by calculating a priori bounds that will be used for proving that the
solution $\left(  u,w\right)  $ to the system (\ref{eq1})-(\ref{ic2}) belongs
to a suitable H\"{o}lder space.

The regularity is then successively ameliorated until obtaining a uniform
bound of $\left\vert u(\cdot,t)\right\vert _{\Omega}^{(l+2)}$ by respect to
$t$. As the length of the existence interval obtained in Theorem
\ref{local existence} depends uniformly on $\left\vert u_{0}\right\vert
_{\Omega}^{(l+2)}$, this bound will imply that the maximal interval of
definition of the solution is $[0,\infty)$.

In what follows, sometimes the function arguments are omitted and for
simplicity we denote with $f_{t}$ the $t$-derivative of the function $f$.
Also, the variable $t$ belongs to the maximal time interval of existence of
the classical solution $(u,v)$ of the problem (\ref{eq1})-(\ref{ic2}).

\subsection{A Lyapunov function for the system}

\label{lyap}

The results obtained in this Subsection do not depend on the dimension of the
space, they are valid in a bounded domain $\Omega\subset\mathbb{R}^{N},$
$N\geqslant1$.

\begin{proposition}
\label{mass}Suppose that $\left\Vert u_{0}\right\Vert _{L^{1}(\Omega)}<\infty
$. Then the total mass of the solution $u$ is bounded%
\begin{equation}
\int\limits_{\Omega}u(x,t)dx\leqslant\left\vert \Omega\right\vert \max\left\{
1,M_{0}\right\}  \label{mass_estimat}%
\end{equation}
for all $t>0$, where $M_{0}=\left\vert \Omega\right\vert ^{-1}\left\Vert
u_{0}\right\Vert _{L^{1}\left(  \Omega\right)  }$ represents the initial mass
and $\left\vert \Omega\right\vert $ denotes the volume of $\Omega$.
\end{proposition}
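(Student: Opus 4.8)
The statement concerns a conservation (in fact dissipation) law for the first equation. The idea is to integrate \eqref{eq1} over $\Omega$ and use the no-flux boundary condition \eqref{bc} to kill the divergence term, leaving an ODE inequality for the total mass that can be solved explicitly.

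Concretely, I would first set $m(t):=\int_\Omega u(x,t)\,dx$. Integrating \eqref{eq1} over $\Omega$ and applying the divergence theorem,
\begin{equation*}
\frac{d}{dt}\int_\Omega u\,dx=\int_\Omega \Delta u\,dx-\int_\Omega\nabla\cdot(u\nabla w)\,dx+\delta\int_\Omega u(1-u)\,dx
=\int_{\partial\Omega}\Bigl(\frac{\partial u}{\partial\eta}-u\frac{\partial w}{\partial\eta}\Bigr)d\sigma+\delta\int_\Omega u(1-u)\,dx,
\end{equation*}
and the boundary integral vanishes by \eqref{bc}. Hence $m'(t)=\delta\int_\Omega u(1-u)\,dx$. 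Since $u\geqslant0$ (nonnegativity from the maximum principle, as established in Section \ref{local}), Jensen's inequality (Cauchy--Schwarz) gives $\int_\Omega u^2\,dx\geqslant|\Omega|^{-1}m(t)^2$, so
\begin{equation*}
m'(t)\leqslant\delta\,m(t)-\frac{\delta}{|\Omega|}m(t)^2=\frac{\delta}{|\Omega|}\,m(t)\bigl(|\Omega|-m(t)\bigr).
\end{equation*}
This is a logistic differential inequality with carrying capacity $|\Omega|$. A standard comparison argument then shows that $m(t)\leqslant\max\{|\Omega|,m(0)\}$ for all $t>0$: if $m(0)\leqslant|\Omega|$ the right-hand side keeps $m$ below $|\Omega|$, while if $m(0)>|\Omega|$ the right-hand side is negative so $m$ decreases toward $|\Omega|$. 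Rewriting in terms of $M_0=|\Omega|^{-1}\|u_0\|_{L^1(\Omega)}=|\Omega|^{-1}m(0)$, i.e. $m(0)=|\Omega|M_0$, gives exactly $m(t)\leqslant|\Omega|\max\{1,M_0\}$, which is \eqref{mass_estimat}. (When $\delta=0$ the inequality is even simpler: $m$ is constant, so $m(t)=m(0)=|\Omega|M_0\leqslant|\Omega|\max\{1,M_0\}$.)

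The routine justifications are: differentiation under the integral sign and the divergence theorem are legitimate because $(u,w)\in(C^{l+2,l/2+1}(\overline\Omega_t))^2$ on the existence interval, and the boundary term is literally the expression in \eqref{bc}. The only mild subtlety — not really an obstacle — is invoking the logistic comparison correctly at the initial time when $m(0)=|\Omega|M_0$ could be either above or below $|\Omega|$; handling both cases via the sign of the quadratic right-hand side at the relevant threshold is what produces the $\max\{1,M_0\}$. No genuinely hard step is expected here; the proposition is a direct a priori estimate whose role is to seed the $L^p$ and $L^\infty$ bounds derived in the later subsections.
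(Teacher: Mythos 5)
Your proposal is correct and follows essentially the same route as the paper: integrate \eqref{eq1} over $\Omega$, kill the flux term with \eqref{bc}, bound $\int_\Omega u^2$ from below by Jensen (Cauchy--Schwarz), and close with an ODE comparison for the mass. The paper compresses your explicit logistic comparison into the phrase ``Jensen's inequality and Gronwall lemma,'' but the underlying argument and the resulting bound $\left\vert\Omega\right\vert\max\{1,M_0\}$ are identical.
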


\begin{proof}
Taking into account the boundary condition (\ref{bc}) and integrating the
equation (\ref{eq1}) over $\Omega$, we can easily deduce
\begin{equation}
\int\limits_{\Omega}u_{t}(x,t)dx=\delta\int\limits_{\Omega}u(x,t)dx-\delta
\int\limits_{\Omega}u^{2}(x,t)dx. \label{mass0}%
\end{equation}
Applying Jensen's inequality and Gronwall lemma we obtain the estimation
(\ref{mass_estimat}).
\end{proof}

\begin{remark}
1. Since the solution $u$ is nonnegative, a\ consequence of the property
(\ref{mass_estimat}) is that $u$ satisfies an a priori $L^{1}$ estimate
uniform in time
\[
\left\Vert u\right\Vert _{L^{\infty}(0,t;L^{1}\left(  \Omega\right)
)}=\left(  \left\Vert u^{1/2}\right\Vert _{L^{\infty}(0,t;L^{2}\left(
\Omega\right)  )}\right)  ^{2}\leqslant\left\vert \Omega\right\vert
\max\left\{  1,M_{0}\right\}
\]
for all $t>0$.

2. Let us observe that, from (\ref{eq2}), we have%
\begin{equation}
w=w_{0}e^{-\int\limits_{0}^{t}u\left[  w_{0}^{1-\beta}+\left(  \beta-1\right)
\int\limits_{0}^{s}u\right]  ^{-1}ds}. \label{wexpr}%
\end{equation}
For $w_{0}(x)>0$, $x\in\Omega$, we obtain $0<w(x,t)\leqslant w_{0}(x)$ for all
$t>0$, which implies%
\begin{equation}
\left\Vert w\right\Vert _{L^{\infty}(0,t;L^{\infty}(\Omega))}\leqslant
\left\Vert w_{0}\right\Vert _{L^{\infty}(\Omega)}. \label{winf}%
\end{equation}

\end{remark}

We introduce the following two functionals%
\begin{align}
F(u,w)  &  =\int\limits_{\Omega}u[\ln u-1]dx+\frac{1}{2}\int\limits_{\Omega
}w^{-\beta}\left\vert \nabla w\right\vert ^{2}dx,\label{Fly}\\
D(u,w)  &  =4\int\limits_{\Omega}\left\vert \nabla u^{1/2}\right\vert
^{2}dx+\frac{\beta}{2}\int\limits_{\Omega}uw^{-1}\left\vert \nabla
w\right\vert ^{2}dx+\delta\int\limits_{\Omega}u\left(  u-1\right)  \ln udx
\label{Dly}%
\end{align}
and we show that $F(u,w)$ is a Lyapunov functional to the system
(\ref{eq1})-(\ref{ic2}).

\begin{lemma}
\label{FD}If $(u,w)$ is a solutions to the system (\ref{eq1})-(\ref{ic2}),
then we have
\begin{equation}
\frac{d}{dt}F(u,w)=-D(u,w)\leqslant0. \label{Lyap1}%
\end{equation}

\end{lemma}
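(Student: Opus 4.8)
The plan is to differentiate $F(u,w)$ along the flow, integrate by parts using the no-flux boundary condition, and use the second equation to handle the $w$-terms, ultimately recognizing the resulting sum of squares plus the logistic contribution as exactly $D(u,w)$.

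First I would write $\frac{d}{dt}F = I_1 + I_2$ where $I_1 = \int_\Omega u_t(\ln u)\,dx$ (since $\frac{d}{dt}(u[\ln u - 1]) = u_t\ln u$) and $I_2 = \frac{d}{dt}\left(\frac12\int_\Omega w^{-\beta}|\nabla w|^2\,dx\right)$. For $I_1$, substitute $u_t = \Delta u - \nabla\cdot(u\nabla w) + \delta u(1-u)$ and integrate by parts; the boundary term $\int_{\partial\Omega}(\partial_\eta u - u\,\partial_\eta w)\ln u\,dS$ vanishes by (\ref{bc}). This yields $I_1 = -\int_\Omega \nabla u\cdot\nabla(\ln u)\,dx + \int_\Omega u\nabla w\cdot\nabla(\ln u)\,dx + \delta\int_\Omega u(1-u)\ln u\,dx$. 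The first term is $-\int_\Omega \frac{|\nabla u|^2}{u}\,dx = -4\int_\Omega|\nabla u^{1/2}|^2\,dx$, and the middle term simplifies to $\int_\Omega \nabla w\cdot\nabla u\,dx$ because $u\nabla(\ln u) = \nabla u$.

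Next, for $I_2$: expand $\frac{d}{dt}(w^{-\beta}|\nabla w|^2) = -\beta w^{-\beta-1}w_t|\nabla w|^2 + 2w^{-\beta}\nabla w\cdot\nabla w_t$. Using $w_t = -w^\beta u$, the first piece gives $+\beta w^{-1}u|\nabla w|^2$, so it contributes $\frac{\beta}{2}\int_\Omega uw^{-1}|\nabla w|^2\,dx$, matching the second term of $D$. For the second piece, $2w^{-\beta}\nabla w\cdot\nabla w_t$; one option is to write $w^{-\beta}\nabla w = \frac{1}{1-\beta}\nabla(w^{1-\beta})$ (for $\beta>1$; the $\beta=1$ case uses $\nabla(\ln w)$ as noted in the Remark) and integrate by parts in space, moving the divergence onto $w^{-\beta}\nabla w$; alternatively, just integrate by parts directly: $\frac12\cdot 2\int_\Omega w^{-\beta}\nabla w\cdot\nabla w_t\,dx = -\int_\Omega \nabla\cdot(w^{-\beta}\nabla w)\,w_t\,dx + \text{(boundary)}$. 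The boundary term involves $\partial_\eta w$ at $\partial\Omega$; since $w_t = -w^\beta u$ and from the structure of the problem $\partial_\eta w = 0$ on $\partial\Omega$ is inherited from $w_0$ satisfying the compatibility condition together with $\partial_\eta(-w^\beta u)$ — I should check this vanishes (it does: differentiating (\ref{eq2}) and the boundary condition shows $\partial_\eta w$ stays zero, or more simply $w_t$ has the right boundary behavior). Substituting $w_t = -w^\beta u$ converts this into $\int_\Omega \nabla\cdot(w^{-\beta}\nabla w)\,w^\beta u\,dx$, which after expanding $\nabla\cdot(w^{-\beta}\nabla w) = w^{-\beta}\Delta w - \beta w^{-\beta-1}|\nabla w|^2$ becomes $\int_\Omega u\Delta w\,dx - \beta\int_\Omega uw^{-1}|\nabla w|^2\,dx$. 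Then integrate $\int_\Omega u\Delta w\,dx = -\int_\Omega\nabla u\cdot\nabla w\,dx + \int_{\partial\Omega}u\,\partial_\eta w\,dS$, and the boundary term again vanishes.

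Assembling: the $-\int_\Omega\nabla u\cdot\nabla w\,dx$ from $I_2$ cancels the $+\int_\Omega\nabla w\cdot\nabla u\,dx$ from $I_1$; the $+\frac{\beta}{2}\int uw^{-1}|\nabla w|^2$ survives and the leftover $-\beta\int uw^{-1}|\nabla w|^2$ from the integration by parts must combine correctly — I will need to track the factor of $\frac12$ carefully so that the net coefficient of $\int uw^{-1}|\nabla w|^2$ comes out to exactly $-\frac{\beta}{2}$. Finally the logistic term: $\delta\int_\Omega u(1-u)\ln u\,dx = -\delta\int_\Omega u(u-1)\ln u\,dx$, matching the last term of $D$ up to sign. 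Thus $\frac{d}{dt}F = -D(u,w)$. Nonnegativity of $D$ follows since $4\int|\nabla u^{1/2}|^2 \geq 0$, $\frac{\beta}{2}\int uw^{-1}|\nabla w|^2 \geq 0$ (as $u\geq 0$, $w>0$), and $u(u-1)\ln u \geq 0$ pointwise because $(u-1)$ and $\ln u$ always share the same sign. The main obstacle is bookkeeping the boundary terms — verifying that $\partial_\eta w = 0$ on $\partial\Omega$ propagates from the data — and keeping the constants in the $w^{-\beta}|\nabla w|^2$ terms consistent through the two integrations by parts; everything else is routine integration by parts.
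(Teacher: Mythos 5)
Your overall strategy coincides with the paper's: differentiate $F$, test the $u$-equation with $\ln u$ and integrate by parts using the no-flux condition, and use the $w$-equation to handle the gradient term; the arithmetic of the $\tfrac{\beta}{2}$ coefficients and the sign of the logistic term does work out exactly as you hope. The genuine flaw is your disposal of the boundary terms created by integrating the term $\int_\Omega w^{-\beta}\nabla w\cdot\nabla w_t\,dx$ by parts in space. You assert that $\partial_\eta w=0$ on $\partial\Omega$ is ``inherited'' from the data, but no Neumann condition on $w$ is imposed anywhere: the boundary condition (\ref{bc}) constrains only the combined flux $\partial_\eta u-u\,\partial_\eta w$, and the compatibility condition $\partial_\eta u_0=u_0\,\partial_\eta w_0$ does not force $\partial_\eta w_0=0$. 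Indeed, differentiating (\ref{eq2}) along $\eta$ and using (\ref{bc}) gives, on $\partial\Omega$,
\begin{equation*}
\partial_t\bigl(\partial_\eta w\bigr)=-u\,w^{\beta-1}(\beta+w)\,\partial_\eta w,
\qquad\text{so}\qquad
\partial_\eta w(x,t)=\partial_\eta w_0(x)\,e^{-\int_0^t u\,w^{\beta-1}(\beta+w)\,ds},
\end{equation*}
which vanishes only when $\partial_\eta w_0$ does. So the two boundary integrals you drop, namely $-\int_{\partial\Omega}u\,\partial_\eta w\,dS$ (from integrating $w^{-\beta}\nabla w\cdot\nabla w_t$ by parts and substituting $w_t=-w^\beta u$) and $+\int_{\partial\Omega}u\,\partial_\eta w\,dS$ (from integrating $\int_\Omega u\,\Delta w$ by parts), are not individually zero; your stated justification fails.

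The result is nonetheless salvageable in two ways. Either observe that the two boundary terms above cancel each other exactly, so your chain of identities is correct even though each boundary term is nonzero; or, better, avoid the spatial integration by parts on the $w$-terms altogether, which is what the paper does: since $w_t=-w^\beta u$ pointwise, $\nabla w_t=-\beta w^{\beta-1}u\,\nabla w-w^\beta\nabla u$, hence
\begin{equation*}
\int_\Omega w^{-\beta}\nabla w\cdot\nabla w_t\,dx
=-\int_\Omega\nabla u\cdot\nabla w\,dx-\beta\int_\Omega u\,w^{-1}|\nabla w|^2\,dx
\end{equation*}
with no boundary contribution at all, which combined with the first piece of $\tfrac12\frac{d}{dt}\int_\Omega w^{-\beta}|\nabla w|^2\,dx$ gives directly the identity $\int_\Omega\nabla u\cdot\nabla w=-\tfrac12\frac{d}{dt}\int_\Omega w^{-\beta}|\nabla w|^2-\tfrac{\beta}{2}\int_\Omega u w^{-1}|\nabla w|^2$ used in the paper. (A minor further point: since $\ln u$ is singular where $u=0$, the paper performs the $\ln u$ multiplication with the regularization $\ln(u+\varepsilon)$ and lets $\varepsilon\to0$; your argument should at least record this.)
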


\begin{proof}
We formally differentiate the functional $F$ with respect to $t$:%
\[
\frac{d}{dt}F(u,w)=\int\limits_{\Omega}u_{t}[\ln u-1]dx+\int\limits_{\Omega
}u_{t}dx+\frac{1}{2}\frac{d}{dt}\int\limits_{\Omega}w^{-\beta}\left\vert
\nabla w\right\vert ^{2}dx.
\]
Multiplying the equation (\ref{eq1}) by $\ln u$ and formally integrating on
$\Omega$ (in fact we multiply by $\ln(u+\varepsilon)$, $\varepsilon>0$ and
after integration we make $\varepsilon\rightarrow0$), we get%
\[
\int\limits_{\Omega}u_{t}[\ln u-1]dx=\int\limits_{\Omega}\Delta u[\ln
u-1]dx-\int\limits_{\Omega}\nabla\cdot(u\nabla w)[\ln u-1]dx+\delta
\int\limits_{\Omega}u(1-u)[\ln u-1]dx
\]
and taking into account the equality (\ref{mass0}), we have
\begin{equation}
\int\limits_{\Omega}u_{t}[\ln u-1]dx=-4\int\limits_{\Omega}\left\vert \nabla
u^{1/2}\right\vert ^{2}dx+\int\limits_{\Omega}\nabla u\cdot\nabla
wdx+\delta\int\limits_{\Omega}u(1-u)\ln udx-\int\limits_{\Omega}u_{t}dx.
\label{esto}%
\end{equation}
Estimating the second term from the right-hand side in the last equality using
(\ref{eq2}):%
\[
\int\limits_{\Omega}\nabla u\cdot\nabla wdx=-\frac{1}{2}\frac{d}{dt}%
\int\limits_{\Omega}(w^{-\beta}\left\vert \nabla w\right\vert ^{2}%
)dx-\frac{\beta}{2}\int\limits_{\Omega}uw^{-1}\left\vert \nabla w\right\vert
^{2}dx
\]
and introducing it in (\ref{esto}), we obtain (\ref{Lyap1}).
\end{proof}

\ \ \ 

Throughout this paper we consider the following assumption on the initial data:

\ 

$(\mathcal{H})$ the functions $u_{0}(x)\geqslant0$ and $w_{0}(x)>0$ satisfy
$F(u_{0},w_{0})<+\infty$, for all $x\in\Omega$.

\ 

\begin{remark}
1. Let us observe that if the hypothesis $(\mathcal{H})$ is satisfied, then
$u_{0}\in L^{1}(\Omega)$ because%
\begin{equation}
\left\Vert u_{0}\right\Vert _{L^{1}(\Omega)}\leqslant\int\limits_{\Omega
}\left[  u_{0}\left(  \ln u_{0}-1\right)  +e\right]  dx\leqslant F(u_{0}%
,w_{0})+e\left\vert \Omega\right\vert . \label{u0}%
\end{equation}

2. In fact, because $u(\ln u-1)\geqslant-1$ for all $u>0$, the hypothesis
$(\mathcal{H})$ is equivalent with the boundedness of $\int\limits_{\Omega
}u_{0}\ln u_{0}dx$ and $w_{0}^{-\frac{\beta-2}{2}}\in H^{1}(\Omega)$ if
$\beta\neq2$ or $\ln w_{0}\in H^{1}(\Omega)$ if $\beta=2$.
\end{remark}

\begin{lemma}
\label{bounds}If the hypothesis $(\mathcal{H})$ is satisfied, then the
functional $F$ is bounded, i.e. there exists a positive constant $C_{4}$
independent on $t$, such that%
\begin{equation}
\left\vert F(u,w)\right\vert \leqslant C_{4} \label{Lyap2}%
\end{equation}
for all $t>0$. Moreover, the boundedness independently on $t$ of both terms of
the functional $F$ follows.
\end{lemma}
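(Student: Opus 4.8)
The plan is to combine the monotonicity identity from Lemma \ref{FD} with a lower bound on $F$ that holds uniformly in $t$. Since $\frac{d}{dt}F(u,w) = -D(u,w) \leqslant 0$, the functional $t \mapsto F(u(\cdot,t),w(\cdot,t))$ is nonincreasing, hence $F(u,w) \leqslant F(u_0,w_0) < +\infty$ by hypothesis $(\mathcal{H})$. This gives the upper bound for free; the work is entirely in bounding $F$ from below by a constant independent of $t$. For this I would treat the two pieces of $F$ separately. The gradient term $\frac{1}{2}\int_\Omega w^{-\beta}|\nabla w|^2\,dx$ is manifestly nonnegative, so it only remains to control the entropy term $\int_\Omega u[\ln u - 1]\,dx$ from below. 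Here I would use the elementary pointwise inequality $u[\ln u - 1] \geqslant -1$ for all $u > 0$ (the minimum of $s\mapsto s(\ln s - 1)$ is $-1$, attained at $s=1$), which is already invoked in Remark immediately preceding the statement. Integrating over $\Omega$ yields $\int_\Omega u[\ln u -1]\,dx \geqslant -|\Omega|$, and therefore
\begin{equation*}
F(u,w) \geqslant -|\Omega| + \tfrac{1}{2}\int_\Omega w^{-\beta}|\nabla w|^2\,dx \geqslant -|\Omega|.
\end{equation*}
Combining with the upper bound, $-|\Omega| \leqslant F(u,w) \leqslant F(u_0,w_0)$, so $|F(u,w)| \leqslant C_4$ with $C_4 := \max\{|\Omega|,\, F(u_0,w_0)\}$, which depends only on the data and not on $t$.

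For the last assertion — that each of the two terms of $F$ is individually bounded uniformly in $t$ — I would argue as follows. The gradient term is already pinned between $0$ and $F(u,w) + |\Omega| \leqslant C_4 + |\Omega|$, since the entropy term exceeds $-|\Omega|$; this gives $0 \leqslant \frac{1}{2}\int_\Omega w^{-\beta}|\nabla w|^2\,dx \leqslant C_4 + |\Omega|$. For the entropy term, the lower bound $\int_\Omega u[\ln u - 1]\,dx \geqslant -|\Omega|$ and the resulting upper bound $\int_\Omega u[\ln u - 1]\,dx \leqslant F(u,w) \leqslant C_4$ (using nonnegativity of the gradient term) together confine it to $[-|\Omega|, C_4]$. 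It may be worth recording explicitly, as is done in the excerpt, that this also yields a uniform-in-time bound on $\int_\Omega u\ln u\,dx$ via $\int_\Omega u\ln u\,dx = \int_\Omega u[\ln u - 1]\,dx + \int_\Omega u\,dx$, the last integral being controlled by Proposition \ref{mass}.

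The only genuine subtlety — and the step I would flag as the main point to be careful about — is the justification of the differentiation identity $\frac{d}{dt}F(u,w) = -D(u,w)$ itself, i.e. Lemma \ref{FD}, whose proof was carried out only formally (multiplying by $\ln(u+\varepsilon)$ and passing $\varepsilon \to 0$). For the present lemma, however, I would not need the identity in its sharp form: monotonicity of $F$ along the flow is enough, and that can also be obtained by integrating the differential inequality $\frac{d}{dt}F \leqslant 0$ on any subinterval where the solution is classical (which it is, by the local theory of Section \ref{local}). Since $(u,w) \in (C^{l+2,l/2+1})^2$ on every compact time subinterval, all the integrations by parts in Lemma \ref{FD} are legitimate away from $t=0$, so the only care needed is the behavior as $t \downarrow 0$, where continuity of $u$ and $w$ up to $t=0$ together with hypothesis $(\mathcal{H})$ gives $\limsup_{t\downarrow 0} F(u(\cdot,t),w(\cdot,t)) \leqslant F(u_0,w_0)$, closing the argument. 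No further estimates are required.
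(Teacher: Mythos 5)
Your proposal is correct and follows essentially the same route as the paper: integrate the Lyapunov identity of Lemma \ref{FD} to get $F(u,w)\leqslant F(u_0,w_0)$, then bound $F$ from below by $-\left\vert\Omega\right\vert$ using $u(\ln u-1)\geqslant -1$ together with the nonnegativity of the gradient term, giving $C_4=\max\{\left\vert\Omega\right\vert,\left\vert F(u_0,w_0)\right\vert\}$. Your additional spelling-out of the boundedness of each term separately and the remark on the rigor of the time-differentiation are compatible refinements, not deviations.
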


\begin{proof}
Integrating (\ref{Lyap1}) between $0$ and $t$, we obtain%
\begin{equation}
F(u,w)\leqslant F(u_{0},w_{0}). \label{fu}%
\end{equation}
Let us observe that for all $u>0$, $u\left(  \ln u-1\right)  >-1$ holds and we
have
\begin{equation}
F(u,w)=\int\limits_{\Omega}u[\ln u-1]dx+\frac{1}{2}\int\limits_{\Omega
}w^{-\beta}\left\vert \nabla w\right\vert ^{2}dx\geqslant-\left\vert
\Omega\right\vert . \label{F_delta}%
\end{equation}
From (\ref{fu}), (\ref{F_delta}) and taking into account also the hypothesis
$(\mathcal{H})$ we conclude the lemma with $C_{4}=\max\left\{  \left\vert
\Omega\right\vert ,\left\vert F(u_{0},w_{0})\right\vert \right\}  $.
\end{proof}

\begin{proposition}
If the hypothesis $(\mathcal{H})$ is satisfied, then there exists a positive
constant $C_{5}$ independent on $t$ such that%
\begin{equation}
\int\limits_{\Omega}u\ln udx<C_{5} \label{ulnu}%
\end{equation}
where $C_{5}=C_{5}(\int\limits_{\Omega}u_{0}\ln u_{0}dx,\left\Vert
w_{0}\right\Vert _{H^{1}\left(  \Omega\right)  }^{-\frac{\beta-2}{2}})$ if
$\beta\neq2$, or $C_{5}=C_{5}(\int\limits_{\Omega}u_{0}\ln u_{0},\left\Vert
\ln w_{0}\right\Vert _{H^{1}\left(  \Omega\right)  })$ if $\beta=2$.
\end{proposition}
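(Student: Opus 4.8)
The plan is to extract the bound on $\int_\Omega u\ln u\,dx$ directly from the boundedness of the Lyapunov functional $F$ established in Lemma \ref{bounds}. By Lemma \ref{bounds} we know $|F(u,w)|\leqslant C_4$ for all $t>0$, hence in particular
\[
\int\limits_\Omega u[\ln u-1]\,dx = F(u,w) - \frac12\int\limits_\Omega w^{-\beta}|\nabla w|^2\,dx \leqslant C_4,
\]
since the second term is nonnegative. Adding back $\int_\Omega u\,dx$, which is bounded by Proposition \ref{mass} (note $u_0\in L^1(\Omega)$ by the first Remark following $(\mathcal{H})$, so $M_0<\infty$), gives
\[
\int\limits_\Omega u\ln u\,dx \leqslant C_4 + |\Omega|\max\{1,M_0\}.
\]
This is an upper bound, but the statement asks for $\int_\Omega u\ln u\,dx < C_5$; since $u\ln u \geqslant -1/e$ pointwise the integral is also bounded below by $-|\Omega|/e$, so what really needs to be said is just the upper estimate, which is what the displayed computation delivers.

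The only remaining point is to trace the dependence of the constant so that it matches the claimed form $C_5 = C_5(\int_\Omega u_0\ln u_0\,dx, \|w_0\|^{-(\beta-2)/2}_{H^1(\Omega)})$ (resp. with $\|\ln w_0\|_{H^1(\Omega)}$ when $\beta=2$). Here I would recall that $C_4 = \max\{|\Omega|, |F(u_0,w_0)|\}$ from Lemma \ref{bounds}, and that $F(u_0,w_0)$ is by definition $\int_\Omega u_0[\ln u_0-1]\,dx + \frac12\int_\Omega w_0^{-\beta}|\nabla w_0|^2\,dx$. The first term depends on $\int_\Omega u_0\ln u_0\,dx$ and $\|u_0\|_{L^1}$ (which, via \eqref{u0}, is itself controlled by $\int_\Omega u_0\ln u_0\,dx$ and $|\Omega|$); the gradient term, by the second Remark after $(\mathcal{H})$, equals (up to the constant factor $(\beta-2)^2/2$) $\|\nabla(w_0^{-(\beta-2)/2})\|_{L^2(\Omega)}^2$ when $\beta\neq2$ and $\frac12\|\nabla\ln w_0\|_{L^2(\Omega)}^2$ when $\beta=2$. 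Finally $M_0 = |\Omega|^{-1}\|u_0\|_{L^1(\Omega)}$ is again controlled by $\int_\Omega u_0\ln u_0\,dx$ and $|\Omega|$. Collecting these, the constant on the right-hand side is an explicit increasing function of $\int_\Omega u_0\ln u_0\,dx$ and of $\|w_0\|_{H^1}^{-(\beta-2)/2}$ (resp. $\|\ln w_0\|_{H^1}$), as claimed.

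There is essentially no obstacle here: this proposition is a bookkeeping corollary of Lemma \ref{FD} and Lemma \ref{bounds}. The one thing to be careful about is the rigor of the differentiation of $F$ underlying Lemma \ref{FD} — it was carried out only formally (with the $\ln(u+\varepsilon)$ regularization mentioned in the proof of Lemma \ref{FD}) — but since we are invoking Lemma \ref{bounds} as a black box, that subtlety does not resurface here. I would therefore write the proof in three lines: cite Lemma \ref{bounds}, drop the nonnegative gradient term, add back the mass bound from Proposition \ref{mass}, and then spend a sentence unwinding the constant's dependence via $C_4=\max\{|\Omega|,|F(u_0,w_0)|\}$ and the equivalent form of hypothesis $(\mathcal{H})$ recorded in the Remark.
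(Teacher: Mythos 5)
Your argument is correct and essentially identical to the paper's proof: both bound $\int_\Omega u\ln u\,dx$ by combining the monotonicity/boundedness of the Lyapunov functional $F$ (the paper cites $F(u,w)\leqslant F(u_0,w_0)$ directly, you invoke Lemma \ref{bounds}, which is the same fact packaged as $C_4$) with the mass estimate of Proposition \ref{mass}, exploiting that the gradient term in $F$ is nonnegative. Your extra paragraph unwinding the dependence of the constant via the Remark after $(\mathcal{H})$ is consistent with the stated form of $C_5$ and adds nothing problematic.
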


\begin{proof}
Taking into account the estimates (\ref{mass_estimat}) and (\ref{fu}), we
have
\[
\int\limits_{\Omega}u\ln udx\leqslant\int\limits_{\Omega}u[\ln u-1]dx+\frac
{1}{2}\int\limits_{\Omega}w^{-\beta}\left\vert \nabla w\right\vert ^{2}%
dx+\int\limits_{\Omega}udx\leqslant F(u_{0},w_{0})+\left\vert \Omega
\right\vert \max\left\{  1,M_{0}\right\}  .
\]

\end{proof}

\begin{proposition}
\label{P1}If there exists a positive constant $C$, independent on $t$, such
that the positive function $u\ $satisfies
\[
\int\limits_{\Omega}u\ln udx<C
\]
then
\begin{equation}
\lim\limits_{k\rightarrow\infty}\left\Vert u_{k}\right\Vert _{L^{1}\left(
\Omega\right)  }=0 \label{uk}%
\end{equation}
uniformly by respect to $t>0$, where $u_{k}=(u-k)_{+}=\max\left\{
0,u-k\right\}  $, $k>0$.

\begin{proof}
For $k>1$ we obtain
\[
\left\Vert u_{k}\right\Vert _{L^{1}\left(  \Omega\right)  }\leqslant
\int\limits_{\substack{\Omega\\u(x)>k}}udx\leqslant\frac{1}{\ln k}%
\int\limits_{\substack{\Omega\\u(x)>k}}u\ln udx\leqslant\frac{1}{\ln k}\left(
\int\limits_{\Omega}u\ln udx-\int\limits_{\substack{\Omega\\u(x)<1}}u\ln
udx\right)  \leqslant C_{6}\frac{1}{\ln k}%
\]
where $C_{6}=\left(  C+e^{-1}\left\vert \Omega\right\vert \right)  $. The last
inequality implies (\ref{uk}).
\end{proof}
\end{proposition}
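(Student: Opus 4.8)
The plan is to bound $\|u_k\|_{L^1(\Omega)}$ directly using the finite bound on $\int_\Omega u\ln u\,dx$ by a standard truncation argument. Fix $k>1$. On the set where $u(x)>k$ we have $\ln u(x)>\ln k>0$, so $u(x)\le u(x)\ln u(x)/\ln k$ pointwise there; integrating gives
\[
\|u_k\|_{L^1(\Omega)}=\int_{\{u>k\}}(u-k)\,dx\le\int_{\{u>k\}}u\,dx\le\frac{1}{\ln k}\int_{\{u>k\}}u\ln u\,dx.
\]
The next step is to replace the integral over $\{u>k\}$ by the full integral $\int_\Omega u\ln u\,dx$. This is the only point that needs care: the integrand $u\ln u$ is negative on $\{0<u<1\}$, so enlarging the domain of integration is not automatic. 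I would split
\[
\int_{\{u>k\}}u\ln u\,dx=\int_\Omega u\ln u\,dx-\int_{\{u\le k,\,u>0\}}u\ln u\,dx,
\]
and on $\{0<u\le k\}$ the function $u\ln u$ is bounded below by its global minimum $-e^{-1}$, so $\int_{\{u\le k\}}u\ln u\,dx\ge-e^{-1}|\Omega|$. Hence $\int_{\{u>k\}}u\ln u\,dx\le\int_\Omega u\ln u\,dx+e^{-1}|\Omega|<C+e^{-1}|\Omega|=:C_6$.

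Combining the two displays yields $\|u_k\|_{L^1(\Omega)}\le C_6/\ln k$ with $C_6$ independent of $t$ (since $C$ is). Letting $k\to\infty$ gives $\|u_k\|_{L^1(\Omega)}\to0$, and because the bound $C_6/\ln k$ does not involve $t$, the convergence is uniform in $t>0$, which is exactly (\ref{uk}).

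The argument is elementary; the only genuine subtlety, and the step I would flag as the place where a careless proof goes wrong, is the sign of $u\ln u$ near $0$, handled above by the uniform lower bound $u\ln u\ge-e^{-1}$ on $(0,k]$. No compactness or regularity of $u$ beyond integrability of $u\ln u$ is needed, so the proposition is really a statement about the function $u(\cdot,t)$ for each fixed $t$ with a $t$-independent constant.
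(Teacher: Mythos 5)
Your proof is correct and follows essentially the same route as the paper: bound $u\leqslant u\ln u/\ln k$ on $\{u>k\}$, then control the negative part of $u\ln u$ by the pointwise minimum $-e^{-1}$ to pass to the full integral, arriving at the same bound $C_{6}/\ln k$ with $C_{6}=C+e^{-1}\left\vert \Omega\right\vert$. The only cosmetic difference is that the paper subtracts the integral over $\{u<1\}$ (where $u\ln u<0$) while you subtract over all of $\{u\leqslant k\}$; both give the identical estimate.
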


\subsection{$L^{p}$ a priori estimates, $1<p<\infty$}

\label{estimat2}

In order to obtain the desired $L^{p}$-bound on $u$, we make a change of
variables of the form%
\begin{equation}
v(x,t)=u(x,t)e^{-w(x,t)}. \label{uvw}%
\end{equation}
The system (\ref{eq1})-(\ref{ic2}) becomes%
\begin{align}
&  \frac{\partial v}{\partial t}=\Delta v+\nabla v\cdot\nabla w+e^{w}%
v^{2}w^{\beta}+\delta v(1-ve^{w}) & x  &  \in\Omega,\quad t\in\mathbb{R}%
_{+}\label{eq1v}\\
&  \frac{\partial w}{\partial t}=-e^{w}w^{\beta}v & x  &  \in\Omega,\quad
t\in\mathbb{R}_{+}\label{eq2v}\\
&  \frac{\partial v}{\partial\eta}=0 & x  &  \in\partial\Omega,\quad
t\in\mathbb{R}_{+}\label{bcv}\\
&  v(x,0)=u_{0}(x)e^{-w_{0}(x)}=v_{0}(x)\geqslant0 & x  &  \in\Omega
\label{ic1v}\\
&  w(x,0)=w_{0}(x)>0 & x  &  \in\Omega\label{ic2v}%
\end{align}
where $\delta\geqslant0$ and $\beta\geqslant1$.

\begin{remark}
\label{glob}We shall use this change of variables and the new
system\ (\ref{eq1v})-(\ref{ic2v}) in order to prove an uniform upper-bound for
$\left\vert v\right\vert _{\Omega_{T}}^{(l+2)}$ and subsequently to establish
an uniform upper-bound for $\left\vert u\right\vert _{\Omega_{T}}^{(l+2)}$.
\end{remark}

From now on, for simplicity of notation we shall write $v_{k}$ instead of
$\left(  v-k\right)  _{+}$, where $k>0$.

\begin{proposition}
\label{Lp}Let $\Omega\subset\mathbb{R}^{2}$ be a bounded domain with smooth
boundary. If the hypothesis $(\mathcal{H})$ is satisfied and $v_{0}\in
L^{p}(\Omega)$, $1<p<\infty$, $w_{0}\in L^{\infty}(\Omega)$, then there exists
a constant $C_{7}=C_{7}(p,\left\Vert v_{0}\right\Vert _{L^{p}(\Omega
)},\left\Vert w_{0}\right\Vert _{L^{\infty}(\Omega)})$ independent on time
such that the solution $v$ to the system (\ref{eq1v})-(\ref{ic2v}) satisfies%
\begin{equation}
\left\Vert v\right\Vert _{L^{\infty}(0,T;L^{p}(\Omega))}\leqslant C_{7}%
,\qquad\forall~1\leqslant p<+\infty. \label{lp}%
\end{equation}

\end{proposition}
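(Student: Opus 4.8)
The plan is to derive the $L^p$ bound by testing the equation \eqref{eq1v} with $v_k^{p-1}$, where $v_k=(v-k)_+$ for a threshold $k$ to be chosen large, and to exploit that on the set $\{v>k\}$ we have smallness of $\|v_k\|_{L^1(\Omega)}$ coming from Proposition \ref{P1} applied to $u$ (note $v=ue^{-w}$ and $e^{-w}$ is bounded above and below, so $u\ln u$ bounded gives $v\ln v$ bounded, hence $\|v_k\|_{L^1}\to 0$ uniformly in $t$ as $k\to\infty$). First I would multiply \eqref{eq1v} by $p v_k^{p-1}$ and integrate over $\Omega$; using the no-flux condition \eqref{bcv} and $\nabla v_k=\nabla v$ on $\{v>k\}$, the Laplacian term yields $-\tfrac{4(p-1)}{p}\int|\nabla v_k^{p/2}|^2$, which I keep as the good dissipative term. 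The transport term $\int p v_k^{p-1}\nabla v\cdot\nabla w$ and the quadratic source $\int p v_k^{p-1}e^w v^2 w^\beta$ together with the logistic contribution $\delta\int p v_k^{p-1} v(1-ve^w)$ must all be absorbed.

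The key manipulations I would carry out in order: (1) rewrite the transport term by integrating by parts again, using \eqref{eq2v} to replace $\Delta w$ or $\nabla w$ contributions — more directly, note $\nabla v\cdot\nabla w$ can be estimated after writing $\int p v_k^{p-1}\nabla v_k\cdot\nabla w=\int \nabla(v_k^p)\cdot\nabla w=-\int v_k^p\,\Delta w$, and $\Delta w$ is controlled because $w_t=-e^ww^\beta v$ and $w$ stays in a fixed compact interval by \eqref{winf} (and $w>0$); actually the cleanest route is to bound $\int v_k^p|\nabla w|\,|\nabla v_k|/v_k$-type terms via Young's inequality against the good term. (2) For the dominant quadratic term $\int v_k^{p-1}v^2\le C\int v_k^{p+1}+ (\text{lower order in }k)$, I would invoke the two-dimensional Gagliardo–Nirenberg inequality $\|f\|_{L^{p+1}}^{p+1}\le C\|\nabla f\|_{L^2}^{?}\|f\|_{L^1}^{?}$ with $f=v_k^{p/2}$, or more precisely the interpolation $\|v_k\|_{L^{p+1}}^{p+1}\le C\|\nabla v_k^{p/2}\|_{L^2}^{2}\|v_k\|_{L^1}^{2/p}+C\|v_k\|_{L^1}^{p+1}$, which is exactly where $N=2$ is essential. (3) Choose $k$ so large that the constant multiplying $\|\nabla v_k^{p/2}\|_{L^2}^2$, namely $C\|v_k\|_{L^1}^{2/p}$, is smaller than $\tfrac{2(p-1)}{p}$; this absorbs the bad term into half of the good term. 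What remains is a differential inequality of the form $\tfrac{d}{dt}\int v_k^p \le C_1 - C_2\int v_k^p$ (using Poincaré or the dissipation to get the negative feedback, together with a uniform bound for the part of the integral where $v\le k$), and Gronwall's lemma then yields $\|v_k(\cdot,t)\|_{L^p}\le C$ uniformly in $t$. Finally $\|v\|_{L^p}\le \|v_k\|_{L^p}+k|\Omega|^{1/p}$ gives \eqref{lp}, and the case $p=1$ follows from Remark after Proposition \ref{mass} together with $v\le u$.

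I expect the main obstacle to be step (2)–(3): making the Gagliardo–Nirenberg/Moser–Trudinger-type interpolation in two dimensions precise enough that the coefficient in front of $\|\nabla v_k^{p/2}\|_{L^2}^2$ genuinely becomes arbitrarily small as $k\to\infty$, rather than merely bounded. This is the standard subtlety in Keller–Segel $L^p$ estimates: in dimension $2$ the critical exponent makes the quadratic term borderline, and one needs the uniform-in-time smallness $\|v_k\|_{L^1}\to 0$ (supplied by Proposition \ref{P1} via the entropy bound $\int u\ln u<C_5$) precisely to win the competition. A secondary technical point is handling the logistic term $-\delta\int p v_k^{p-1}v^2e^w$: since $\delta\ge 0$ and $e^w>0$, this term has a favorable sign (it is $\le 0$ on $\{v>k\}$ modulo the $v$ versus $v_k$ discrepancy), so it only helps, but one must track the cross terms $v_k^{p-1}(v-v_k)$ carefully; these are bounded by $k\int v_k^{p-1}v \le \varepsilon\int v_k^p+C_\varepsilon$ and cause no trouble. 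The bounds on $w$ from \eqref{winf} and the positivity $0<w\le w_0$ are used throughout to keep $e^w$ and $w^\beta$ between fixed positive constants depending only on $\|w_0\|_{L^\infty(\Omega)}$.
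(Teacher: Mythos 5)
Your overall strategy (level sets $v_k=(v-k)_+$, two--dimensional Gagliardo--Nirenberg, smallness of $\left\Vert v_k\right\Vert _{L^{1}(\Omega)}$ from Proposition \ref{P1} to absorb the critical quadratic term, then Gronwall) is indeed the paper's strategy, but there is a genuine gap in your treatment of the drift term, which is precisely the point where the paper's choice of test function matters. You test with $pv_k^{p-1}$ and must then handle $\int_\Omega pv_k^{p-1}\nabla v\cdot\nabla w$. Your first route, integrating by parts to get $-\int_\Omega v_k^{p}\Delta w$ and asserting that ``$\Delta w$ is controlled because $w_t=-e^{w}w^{\beta}v$ and $0<w\leqslant w_0$'', does not work: the pointwise bounds on $w$ give no control whatsoever of $\Delta w$ (which involves second space derivatives of $w_0$ and of $\int_0^t v$); such bounds are only obtained much later (Subsection \ref{estimat4}) and even there only in the form $\Psi_i(t)$, increasing in $t$, which would destroy the uniform-in-time character of (\ref{lp}). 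Moreover the integration by parts leaves a boundary term $\int_{\partial\Omega}v_k^{p}\,\partial w/\partial\eta$, which does not vanish: the transformed boundary condition (\ref{bcv}) is $\partial v/\partial\eta=0$, not $\partial w/\partial\eta=0$. Your fallback, Young's inequality against the dissipation, leads to $\int_\Omega v_k^{p}|\nabla w|^{2}$ and thus requires a uniform-in-time bound on $\nabla w$ in some $L^{q}$ with $q>2$, which is likewise unavailable at this stage (the Lyapunov functional only gives $\nabla w\in L^{\infty}(0,T;L^{2}(\Omega))$, and that is not enough here). The paper's device is to test with $pv_k^{p-1}e^{w}$: since $e^{w}\left(  \Delta v+\nabla v\cdot\nabla w\right)  =\nabla\cdot\left(  e^{w}\nabla v\right)$, the drift term is absorbed into the divergence structure, no derivative of $w$ is ever estimated, and only $0<w\leqslant\left\Vert w_0\right\Vert _{L^{\infty}(\Omega)}$ is used; the extra terms produced by $\partial_t e^{w}=-e^{2w}w^{\beta}v$ are exactly the $C_8,C_9,C_{10}$ terms in (\ref{ess2})--(\ref{ess3}). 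Without this (or an equivalent substitute) your energy identity cannot be closed.

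Two smaller points. After absorption, the right-hand side of your differential inequality still contains $\left\Vert v_k^{p/2}\right\Vert _{L^{1}(\Omega)}^{2}=\left\Vert v_k\right\Vert _{L^{p/2}(\Omega)}^{p}$ (from the Cauchy step on the Gagliardo--Nirenberg estimate of $\int_\Omega v_k^{p}$), so the Gronwall bound for $L^{p}$ presupposes the $L^{p/2}$ bound; the paper closes this by induction on $p=2^{j}$ starting from the mass bound (\ref{vkl1}), a step you omit. Also, in your interpolation the correct exponent is $\int_\Omega v_k^{p+1}\leqslant C\left\Vert \nabla\left(  v_k^{p/2}\right)  \right\Vert _{L^{2}(\Omega)}^{2}\left\Vert v_k\right\Vert _{L^{1}(\Omega)}$ (as in (\ref{ess6})), not $\left\Vert v_k\right\Vert _{L^{1}(\Omega)}^{2/p}$; this is harmless for the absorption argument (any positive power of $\left\Vert v_k\right\Vert _{L^{1}(\Omega)}$ is small for large $k$), but it should be stated correctly.
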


\begin{proof}
Testing the equation (\ref{eq1v}) with $pv_{k}^{p-1}e^{w}$, $k>0$,
$p>1$,\ gives
\begin{equation}
\frac{d}{dt}%
%TCIMACRO{\dint \limits_{\Omega}}%
%BeginExpansion
{\displaystyle\int\limits_{\Omega}}
%EndExpansion
v_{k}^{p}e^{w}=-p(p-1)%
%TCIMACRO{\dint \limits_{\Omega}}%
%BeginExpansion
{\displaystyle\int\limits_{\Omega}}
%EndExpansion
v_{k}^{p-2}e^{w}\left\vert \nabla v_{k}\right\vert ^{2}+\delta p%
%TCIMACRO{\dint \limits_{\Omega}}%
%BeginExpansion
{\displaystyle\int\limits_{\Omega}}
%EndExpansion
v_{k}^{p-1}\left[  ve^{w}(1-ve^{w})\right]  +\left(  p-1\right)
%TCIMACRO{\dint \limits_{\Omega}}%
%BeginExpansion
{\displaystyle\int\limits_{\Omega}}
%EndExpansion
e^{2w}w^{\beta}v_{k}^{p+1}. \label{ess1}%
\end{equation}
Taking into account the identity%
\[
\left\vert \nabla\left(  v_{k}^{p/2}\right)  \right\vert ^{2}=\frac{p^{2}}%
{4}v_{k}^{p-2}\left\vert \nabla v_{k}\right\vert ^{2}%
\]
we obtain from (\ref{ess1})%
\begin{align}
\frac{d}{dt}%
%TCIMACRO{\dint \limits_{\Omega}}%
%BeginExpansion
{\displaystyle\int\limits_{\Omega}}
%EndExpansion
v_{k}^{p}e^{w}  &  =-\frac{4(p-1)}{p}%
%TCIMACRO{\dint \limits_{\Omega}}%
%BeginExpansion
{\displaystyle\int\limits_{\Omega}}
%EndExpansion
e^{w}\left\vert \nabla\left(  v_{k}^{p/2}\right)  \right\vert ^{2}+%
%TCIMACRO{\dint \limits_{\Omega}}%
%BeginExpansion
{\displaystyle\int\limits_{\Omega}}
%EndExpansion
pke^{w}\left[  ke^{w}w^{\beta}+\delta\left(  1-ke^{w}\right)  \right]
v_{k}^{p-1}+\nonumber\\
&  +%
%TCIMACRO{\dint \limits_{\Omega}}%
%BeginExpansion
{\displaystyle\int\limits_{\Omega}}
%EndExpansion
e^{w}\left[  (2p-1)ke^{w}w^{\beta}+\delta p\left(  1-2ke^{w}\right)  \right]
v_{k}^{p}+%
%TCIMACRO{\dint \limits_{\Omega}}%
%BeginExpansion
{\displaystyle\int\limits_{\Omega}}
%EndExpansion
e^{2w}\left[  (p-1)w^{\beta}-\delta p\right]  v_{k}^{p+1}. \label{ess2}%
\end{align}
Since $0<w(x,t)\leqslant w_{0}(x)$ and $e^{w(x,t)}\geqslant1$ for all
$x\in\Omega$, $t>0$, we obtain from (\ref{ess2})%
\begin{equation}
\frac{d}{dt}%
%TCIMACRO{\dint \limits_{\Omega}}%
%BeginExpansion
{\displaystyle\int\limits_{\Omega}}
%EndExpansion
v_{k}^{p}e^{w}\leqslant-\frac{4(p-1)}{p}\left\Vert \nabla\left(  v_{k}%
^{p/2}\right)  \right\Vert _{L^{2}(\Omega)}^{2}+C_{8}%
%TCIMACRO{\dint \limits_{\Omega}}%
%BeginExpansion
{\displaystyle\int\limits_{\Omega}}
%EndExpansion
v_{k}^{p-1}+C_{9}%
%TCIMACRO{\dint \limits_{\Omega}}%
%BeginExpansion
{\displaystyle\int\limits_{\Omega}}
%EndExpansion
v_{k}^{p}+C_{10}%
%TCIMACRO{\dint \limits_{\Omega}}%
%BeginExpansion
{\displaystyle\int\limits_{\Omega}}
%EndExpansion
v_{k}^{p+1} \label{ess3}%
\end{equation}
where we have made the following notations%
\begin{align}
C_{8}  &  =C_{8}(p,k,\delta,\left\Vert w_{0}\right\Vert _{L^{\infty}\left(
\Omega\right)  })=\nonumber\\
&  =pk\left[  k\left(  e^{2\left\Vert w_{0}\right\Vert _{L^{\infty}\left(
\Omega\right)  }}\left\Vert w_{0}\right\Vert _{L^{\infty}\left(
\Omega\right)  }^{\beta}-\delta\frac{p}{p-1}\right)  +\delta\left(
e^{\left\Vert w_{0}\right\Vert _{L^{\infty}\left(  \Omega\right)  }}+\frac
{k}{p-1}\right)  \right] \label{c8}\\
C_{9}  &  =C_{9}(p,k,\delta,\left\Vert w_{0}\right\Vert _{L^{\infty}\left(
\Omega\right)  })=\nonumber\\
&  =p\left[  \frac{2p-1}{p}k\left(  e^{2\left\Vert w_{0}\right\Vert
_{L^{\infty}\left(  \Omega\right)  }}\left\Vert w_{0}\right\Vert _{L^{\infty
}\left(  \Omega\right)  }^{\beta}-\delta\frac{p}{p-1}\right)  +\delta\left(
e^{\left\Vert w_{0}\right\Vert _{L^{\infty}\left(  \Omega\right)  }}+\frac
{k}{p-1}\right)  \right] \label{c9}\\
C_{10}  &  =C_{10}(p,\delta,\left\Vert w_{0}\right\Vert _{L^{\infty}\left(
\Omega\right)  })=\left(  p-1\right)  \left(  e^{2\left\Vert w_{0}\right\Vert
_{L^{\infty}\left(  \Omega\right)  }}\left\Vert w_{0}\right\Vert _{L^{\infty
}\left(  \Omega\right)  }^{\beta}-\delta\frac{p}{p-1}\right)  . \label{c10}%
\end{align}
Adding the term $\sigma%
%TCIMACRO{\dint \limits_{\Omega}}%
%BeginExpansion
{\displaystyle\int\limits_{\Omega}}
%EndExpansion
v_{k}^{p}$, where $\sigma>0$ is a constant, on both sides of the last
inequality, we obtain
\begin{equation}
\frac{d}{dt}%
%TCIMACRO{\dint \limits_{\Omega}}%
%BeginExpansion
{\displaystyle\int\limits_{\Omega}}
%EndExpansion
v_{k}^{p}e^{w}+\sigma%
%TCIMACRO{\dint \limits_{\Omega}}%
%BeginExpansion
{\displaystyle\int\limits_{\Omega}}
%EndExpansion
v_{k}^{p}\leqslant-\frac{4(p-1)}{p}\left\Vert \nabla\left(  v_{k}%
^{p/2}\right)  \right\Vert _{L^{2}(\Omega)}^{2}+C_{8}%
%TCIMACRO{\dint \limits_{\Omega}}%
%BeginExpansion
{\displaystyle\int\limits_{\Omega}}
%EndExpansion
v_{k}^{p-1}+\left(  C_{9}+\sigma\right)
%TCIMACRO{\dint \limits_{\Omega}}%
%BeginExpansion
{\displaystyle\int\limits_{\Omega}}
%EndExpansion
v_{k}^{p}+C_{10}%
%TCIMACRO{\dint \limits_{\Omega}}%
%BeginExpansion
{\displaystyle\int\limits_{\Omega}}
%EndExpansion
v_{k}^{p+1}. \label{ess45}%
\end{equation}
We estimate now the last two terms from (\ref{ess45}) using
Gagliardo-Nirenberg's inequality and taking into account the positivity of
$v$. We have%
\begin{align}
&
%TCIMACRO{\dint \limits_{\Omega}}%
%BeginExpansion
{\displaystyle\int\limits_{\Omega}}
%EndExpansion
v_{k}^{p}=\left\Vert v_{k}^{\frac{p}{2}}\right\Vert _{L^{2}(\Omega)}%
^{2}\leqslant C_{11}(\Omega)\left\Vert v_{k}^{\frac{p}{2}}\right\Vert
_{H^{1}(\Omega)}\left\Vert v_{k}^{\frac{p}{2}}\right\Vert _{L^{1}(\Omega
)},\label{ess5}\\
&
%TCIMACRO{\dint \limits_{\Omega}}%
%BeginExpansion
{\displaystyle\int\limits_{\Omega}}
%EndExpansion
v_{k}^{p+1}=\left\Vert v_{k}^{\frac{p}{2}}\right\Vert _{L^{\frac{2(p+1)}{p}%
}(\Omega)}^{\frac{2(p+1)}{p}}\leqslant C_{12}(\Omega)\left\Vert v_{k}%
^{\frac{p}{2}}\right\Vert _{H^{1}(\Omega)}^{2}\left\Vert v_{k}\right\Vert
_{L^{1}(\Omega)}. \label{ess6}%
\end{align}
We insert the estimations (\ref{ess5}), (\ref{ess6}) into (\ref{ess45}) and we
apply Cauchy's inequality. We obtain%
\begin{align}
\frac{d}{dt}%
%TCIMACRO{\dint \limits_{\Omega}}%
%BeginExpansion
{\displaystyle\int\limits_{\Omega}}
%EndExpansion
v_{k}^{p}e^{w}+\sigma%
%TCIMACRO{\dint \limits_{\Omega}}%
%BeginExpansion
{\displaystyle\int\limits_{\Omega}}
%EndExpansion
v_{k}^{p}  &  \leqslant-\frac{4(p-1)}{p}\left\Vert \nabla\left(  v_{k}%
^{\frac{p}{2}}\right)  \right\Vert _{L^{2}(\Omega)}^{2}+C_{8}\left\Vert
v_{k}^{p-1}\right\Vert _{L^{1}(\Omega)}+\nonumber\\
&  +\left[  C_{9}+\sigma\right]  \left\Vert v_{k}^{\frac{p}{2}}\right\Vert
_{L^{2}(\Omega)}^{2}+C_{10}C_{12}\left\Vert v_{k}^{\frac{p}{2}}\right\Vert
_{H^{1}(\Omega)}^{2}\left\Vert v_{k}\right\Vert _{L^{1}(\Omega)}
\leqslant\nonumber\\
&  \leqslant\left[  -\frac{4(p-1)}{p}+C_{10}C_{12}\left\Vert v_{k}\right\Vert
_{L^{1}(\Omega)}+\varepsilon\right]  \left\Vert \nabla\left(  v_{k}^{\frac
{p}{2}}\right)  \right\Vert _{L^{2}(\Omega)}^{2}+C_{8}\left\Vert v_{k}%
^{p-1}\right\Vert _{L^{1}(\Omega)}+\nonumber\\
&  +\varepsilon\left\Vert v_{k}^{\frac{p}{2}}\right\Vert _{L^{2}(\Omega)}%
^{2}+\frac{1}{4\varepsilon}\left\{  C_{11}\left[  C_{9}+\sigma+C_{10}%
C_{12}\left\Vert v_{k}\right\Vert _{L^{1}(\Omega)}\right]  \right\}
^{2}\left\Vert v_{k}^{\frac{p}{2}}\right\Vert _{L^{1}(\Omega)}^{2}.
\label{ess65}%
\end{align}
In order to estimate the second term from the right-hand side of
(\ref{ess65}), we apply Young's inequality and we obtain for $\epsilon>0$%
\begin{equation}
v_{k}^{p-1}\leqslant\frac{1}{p}\epsilon^{-p}+\frac{p-1}{p}\epsilon^{\frac
{p}{p-1}}v_{k}^{p}. \label{ess7}%
\end{equation}
Now, choosing $\varepsilon$ small enough such that $\varepsilon<\min\left\{
\sigma/2,2(p-1)/p\right\}  $ and inserting (\ref{ess7}) in (\ref{ess65}), we
get%
\begin{align*}
\frac{d}{dt}%
%TCIMACRO{\dint \limits_{\Omega}}%
%BeginExpansion
{\displaystyle\int\limits_{\Omega}}
%EndExpansion
v_{k}^{p}e^{w}+\frac{\sigma}{2}%
%TCIMACRO{\dint \limits_{\Omega}}%
%BeginExpansion
{\displaystyle\int\limits_{\Omega}}
%EndExpansion
v_{k}^{p}  &  \leqslant\left[  -\frac{4(p-1)}{p}+C_{10}C_{12}\left\Vert
v_{k}\right\Vert _{L^{1}(\Omega)}+\varepsilon\right]  \left\Vert \nabla\left(
v_{k}^{\frac{p}{2}}\right)  \right\Vert _{L^{2}(\Omega)}^{2}+\frac{1}%
{p}\epsilon^{-p}C_{8}\left\vert \Omega\right\vert +\\
&  +\frac{1}{4\varepsilon}\left\{  C_{11}\left[  C_{9}+\sigma+\frac{p-1}%
{p}\epsilon^{\frac{p}{p-1}}C_{8}+C_{10}C_{12}\left\Vert v_{k}\right\Vert
_{L^{1}(\Omega)}\right]  \right\}  ^{2}\left\Vert v_{k}^{\frac{p}{2}%
}\right\Vert _{L^{1}(\Omega)}^{2}.
\end{align*}
Taking into account Proposition \ref{P1}, we can choose $k$ sufficiently large
such that the coefficient of $\left\Vert \nabla\left(  v_{k}^{\frac{p}{2}%
}\right)  \right\Vert _{L^{2}(\Omega)}^{2}$ is negative. In this way, using
(\ref{winf}), the last inequality becomes%
\begin{equation}
\frac{d}{dt}%
%TCIMACRO{\dint \limits_{\Omega}}%
%BeginExpansion
{\displaystyle\int\limits_{\Omega}}
%EndExpansion
v_{k}^{p}e^{w}+\frac{\sigma}{2e^{\left\Vert w_{0}\right\Vert _{L^{\infty
}\left(  \Omega\right)  }}}%
%TCIMACRO{\dint \limits_{\Omega}}%
%BeginExpansion
{\displaystyle\int\limits_{\Omega}}
%EndExpansion
v_{k}^{p}e^{w}\leqslant C_{13}\left\Vert v_{k}^{\frac{p}{2}}\right\Vert
_{L^{1}(\Omega)}^{2}+\frac{1}{p}\epsilon^{-p}C_{8}\left\vert \Omega\right\vert
\label{ess8}%
\end{equation}
where%
\[
C_{13}=C_{13}(p,k,\delta,\left\Vert w_{0}\right\Vert _{L^{\infty}\left(
\Omega\right)  },\sigma,\varepsilon,\epsilon,\Omega)=\frac{1}{4\varepsilon
}\left\{  C_{11}\left[  C_{9}+\sigma+\frac{p-1}{p}\epsilon^{\frac{p}{p-1}%
}C_{8}+C_{10}C_{12}\left\Vert v_{k}\right\Vert _{L^{1}(\Omega)}\right]
\right\}  ^{2}.
\]
Applying Gronwall's inequality, we obtain \ from (\ref{ess8})
\begin{equation}%
%TCIMACRO{\dint \limits_{\Omega}}%
%BeginExpansion
{\displaystyle\int\limits_{\Omega}}
%EndExpansion
v_{k}^{p}\leqslant%
%TCIMACRO{\dint \limits_{\Omega}}%
%BeginExpansion
{\displaystyle\int\limits_{\Omega}}
%EndExpansion
v_{k}^{p}e^{w}\leqslant\max\left\{
%TCIMACRO{\dint \limits_{\Omega}}%
%BeginExpansion
{\displaystyle\int\limits_{\Omega}}
%EndExpansion
\left(  v_{0}-k\right)  _{+}^{p}e^{w_{0}},\frac{2e^{\left\Vert w_{0}%
\right\Vert _{L^{\infty}\left(  \Omega\right)  }}}{\sigma}\left[
C_{13}\left\Vert v_{k}^{\frac{p}{2}}\right\Vert _{L^{1}(\Omega)}^{2}+\frac
{1}{p}\epsilon^{-p}C_{8}\left\vert \Omega\right\vert \right]  \right\}  .
\label{vk}%
\end{equation}
We will show by induction that
\[
\left\Vert v_{k}(t)\right\Vert _{L^{p}(\Omega)}\leqslant C
\]
for all $p=2^{j}$, with $j\in\mathbb{N}$, where $C$ is a constant independent
of $t$.

Let us remark that, taking into account Proposition \ref{mass}, we have%
\begin{equation}
\left\Vert v_{k}(t)\right\Vert _{L^{1}(\Omega)}\leqslant\left\vert
\Omega\right\vert \max\left\{  1,M_{0}\right\}  . \label{vkl1}%
\end{equation}
Let $p=2^{j}$, and suppose that $\left\Vert v_{k}(t)\right\Vert _{L^{2^{j-1}%
}(\Omega)}=\left\Vert v_{k}(t)\right\Vert _{L^{p/2}(\Omega)}$ is uniformly
bounded, the bound being independent of $t>0$. We obtain from (\ref{vk}) that
$\left\Vert v_{k}(t)\right\Vert _{L^{2^{j}}(\Omega)}$ is bounded,
$j\in\mathbb{N}\backslash\left\{  0\right\}  $. We conclude, taking into
account the embeddings of $L^{p}\left(  \Omega\right)  $ spaces, that
\[
\left\Vert v_{k}\right\Vert _{L^{\infty}(0,t;L^{p}(\Omega))}\leqslant
C_{14}\text{, for every }1\leqslant p<\infty
\]
where $C_{14}=C_{14}(p,\left\Vert v_{0}\right\Vert _{L^{1}(\Omega)},\left\Vert
w_{0}\right\Vert _{L^{\infty}\left(  \Omega\right)  })$\ is a positive
constant, independent of $t>0$.

Finally, we obtain%
\[
\left\Vert v\right\Vert _{L^{p}(\Omega)}\leqslant2\left(  \left\Vert
v_{k}(t)\right\Vert _{L^{p}(\Omega)}^{p}+k^{p}\left\vert \Omega\right\vert
\right)  ^{1/p}%
\]
and we conclude the Theorem.
\end{proof}

\begin{remark}
The above estimations are strongly dependent on the dimension of the space and
they are done in the case when $C_{8},$ $C_{9},$ $C_{10}$ are positive. If one
or several of these constants are negative (for example, when $\delta
>\frac{p-1}{p}e^{2\left\Vert w_{0}\right\Vert _{L^{\infty}\left(
\Omega\right)  }}\left\Vert w_{0}\right\Vert _{L^{\infty}\left(
\Omega\right)  }^{\beta}$, $p>1$), the result remains true the upper bound
being slightly different.
\end{remark}

\subsection{$L^{\infty}$ a priori estimates}

\label{estimat3}

\begin{proposition}
\label{estimat_infinity}Let $\Omega\subset\mathbb{R}^{2}$ be a bounded domain
with smooth boundary. If the hypothesis $(\mathcal{H})$ is satisfied,
$v_{0}\in L^{\infty}(\Omega)$ and $w_{0}\in L^{\infty}(\Omega)$, then the
solution $v$ of the system (\ref{eq1v})-(\ref{ic2v}) satisfies%
\[
\left\Vert v\right\Vert _{L^{\infty}(0,T;L^{\infty}(\Omega))}\leqslant C
\]
where $C$ is a positive constant independent on time which will be determined later.
\end{proposition}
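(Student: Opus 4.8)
The plan is to obtain the $L^\infty$ bound by a Moser-type iteration (Alikakos iteration) on the sequence of $L^p$-norms, exploiting the fact that the constants $C_7 = C_7(p,\|v_0\|_{L^p(\Omega)},\|w_0\|_{L^\infty(\Omega)})$ from Proposition~\ref{Lp} have been shown to be time-independent. First I would set $M_p(t):=\int_\Omega v^p e^w\,dx$ and redo the computation leading to (\ref{ess8}), but now keeping careful track of how the coefficients grow with $p$. Testing (\ref{eq1v}) with $p v^{p-1}e^w$ (now with the cutoff $k$ fixed large enough, chosen via Proposition~\ref{P1} so that the gradient coefficient stays negative for \emph{all} $p$ simultaneously — note $\|v_k\|_{L^1(\Omega)}$ can be made uniformly small independently of $p$), one arrives at a differential inequality of the form
\begin{equation}
\frac{d}{dt}M_p + \sigma_0 M_p \leqslant a\,p^2\left(\left\|v_k^{p/2}\right\|_{L^1(\Omega)}^2 + 1\right)
\label{plan1}
\end{equation}
where $a$ and $\sigma_0$ are independent of $p$ and $t$. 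By Gronwall this gives $M_p(t)\leqslant \max\{M_p(0),\ (a/\sigma_0)\,p^2(\sup_t\|v_k^{p/2}\|_{L^1(\Omega)}^2+1)\}$, and since $\|v_k^{p/2}\|_{L^1(\Omega)}=\|v_k\|_{L^{p/2}(\Omega)}^{p/2}$, using $e^w\geqslant 1$ and $w\leqslant w_0$ one translates this into the recursion $\|v_k\|_{L^p}^p \leqslant C\max\{\|v_0\|_{L^\infty}^p|\Omega|,\ p^2\|v_k\|_{L^{p/2}}^p + p^2\}$.

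Next I would iterate this recursion along $p=2^j$. Writing $a_j := \max\{1,\sup_t\|v_k(\cdot,t)\|_{L^{2^j}(\Omega)}\}$, the recursion has the schematic form $a_j^{2^j} \leqslant C\,4^{j}\,a_{j-1}^{2^j}$, i.e. $a_j \leqslant (C\,4^j)^{2^{-j}} a_{j-1}$. Since $\sum_j 2^{-j}\log(C\,4^j) < \infty$, the infinite product $\prod_j (C\,4^j)^{2^{-j}}$ converges to a finite constant depending only on $\|v_0\|_{L^\infty(\Omega)}$, $\|w_0\|_{L^\infty(\Omega)}$, $\delta$, $\beta$ and $\Omega$. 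Hence $\limsup_{j\to\infty} a_j < \infty$, which is exactly $\|v_k\|_{L^\infty(0,T;L^\infty(\Omega))}\leqslant C$; adding back $k$ gives the claim for $v$ itself. A cleaner alternative, if one prefers, is to invoke directly the standard lemma (Alikakos / Kowalczyk): if $y_j := \sup_t M_{2^j}(t)$ satisfies $y_j \leqslant c^j \max\{y_{j-1}^2, 1\}$ with $c$ independent of $j$, then $y_j^{2^{-j}}$ is bounded, yielding the $L^\infty$ bound.

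The main obstacle — and the point deserving the most care — is the $p$-dependence of the constants $C_8,C_9,C_{10},C_{11},C_{12},C_{13}$ appearing in the proof of Proposition~\ref{Lp}: I must verify that each grows at most polynomially in $p$ (in fact the relevant product $C_{13}C_{11}^2$-type terms should be $O(p^2)$ after the Gagliardo--Nirenberg step, since in dimension two $\|v_k^{p/2}\|_{L^2}^2 \leqslant C\|v_k^{p/2}\|_{H^1}\|v_k^{p/2}\|_{L^1}$ costs no extra powers of $p$ beyond those already displayed), so that the product in the iteration converges. Equally important is that the cutoff level $k$ can be fixed \emph{once and for all}, independently of $p$: this is guaranteed because Proposition~\ref{P1} makes $\|v_k\|_{L^1(\Omega)}$ (and hence $\|u_k\|_{L^1(\Omega)}$ via the bounded factor $e^w$) arbitrarily small uniformly in $t$, so a single $k$ makes the coefficient $-\tfrac{4(p-1)}{p}+C_{10}C_{12}\|v_k\|_{L^1(\Omega)}+\varepsilon$ negative for every $p\geqslant 2$. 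Everything else is a routine bookkeeping of the constants already assembled in Subsection~\ref{estimat2}.
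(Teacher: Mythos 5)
Your overall strategy (a Moser--Alikakos iteration along $p=2^{j}$) is precisely the alternative the paper mentions in the remark following this proposition, so in principle it is viable; but your execution has a genuine gap at exactly the step you yourself single out as the crucial one. The absorption of the term $C_{10}\int_{\Omega}v_{k}^{p+1}$ via (\ref{ess6}) requires the coefficient $-\frac{4(p-1)}{p}+C_{10}C_{12}\left\Vert v_{k}\right\Vert _{L^{1}(\Omega)}+\varepsilon$ in (\ref{ess65}) to be negative. The dissipation coefficient $\frac{4(p-1)}{p}$ is bounded above by $4$ uniformly in $p$, whereas by (\ref{c10}) the constant $C_{10}$ grows linearly in $p$ (whenever $e^{2\left\Vert w_{0}\right\Vert _{L^{\infty}(\Omega)}}\left\Vert w_{0}\right\Vert _{L^{\infty}(\Omega)}^{\beta}>\delta$) and $C_{12}$ is a fixed Gagliardo--Nirenberg constant. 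Hence no single cutoff level $k$ can make this coefficient negative for every $p\geqslant2$: you would need $\left\Vert v_{k}\right\Vert _{L^{1}(\Omega)}\lesssim1/p$, i.e.\ $k=k(p)\rightarrow\infty$ (in fact exponentially in $p$, since Proposition \ref{P1} only yields decay of order $1/\ln k$). But then the final step of ``adding back $k$'' gives $\left\Vert v\right\Vert _{L^{p}(\Omega)}\leqslant\left\Vert v_{k(p)}\right\Vert _{L^{p}(\Omega)}+k(p)\left\vert \Omega\right\vert ^{1/p}$ with $k(p)\rightarrow\infty$, and letting $p\rightarrow\infty$ no longer produces a finite $L^{\infty}$ bound. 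So your differential inequality with $p$-independent constants $a,\sigma_{0}$ is not established, and the convergence of the infinite product, which is the heart of the iteration, is unsupported. (A smaller point: the term $\frac{1}{p}\epsilon^{-p}C_{8}\left\vert \Omega\right\vert $ in (\ref{ess8}) is exponential in $p$ for fixed $\epsilon<1$; this is harmless only if you take $\epsilon=1$ or let $\epsilon$ depend on $p$.)

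The repair within your strategy is to split $\int_{\Omega}v_{k}^{p+1}$ differently, for instance using the two-dimensional Gagliardo--Nirenberg estimate $\int_{\Omega}v_{k}^{p+1}\leqslant C\left\Vert v_{k}^{p/2}\right\Vert _{H^{1}(\Omega)}^{(p+2)/p}\left\Vert v_{k}^{p/2}\right\Vert _{L^{1}(\Omega)}$ followed by Young's inequality with exponents $\frac{2p}{p+2}$ and $\frac{2p}{p-2}$: this absorbs the $O(p)$ prefactor coming from $C_{10}$ into the gradient term at a cost that is only polynomial in $p$ and is paid in $\left\Vert v_{k}\right\Vert _{L^{p/2}(\Omega)}$-type quantities, after which your recursion for $a_{j}$ does close as in Alikakos \cite{ali}. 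For comparison, the paper avoids the iteration entirely: it works only at the level $p=2$, uses the uniform $L^{q+1}$ bound of Proposition \ref{Lp} (with $q=16$) to replace the right-hand side of (\ref{c16n}) by $C_{18}\left\vert \Omega_{k}\right\vert ^{5/4}$ with $C_{18}$ independent of $k$, and then applies the Stampacchia-type Lemma 4.1 of \cite{fang} to the decreasing function $k\mapsto\sup_{t\geqslant0}\left\vert \Omega_{k}(t)\right\vert $ to conclude that it vanishes beyond a finite level $k_{0}$, which is exactly the desired $L^{\infty}$ bound.
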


\begin{proof}
We introduce the following sets%
\[
\Omega_{k}(t)=\left\{  x\in\Omega;\quad v(x,t)>k\right\}
\]
where $k$ is a positive constant. Let us observe that, taking into account
(\ref{vkl1}) and choosing $p=2$, the relation (\ref{ess65}) becomes%
\begin{align}
\frac{d}{dt}%
%TCIMACRO{\dint \limits_{\Omega}}%
%BeginExpansion
{\displaystyle\int\limits_{\Omega}}
%EndExpansion
v_{k}^{2}e^{w}+\sigma%
%TCIMACRO{\dint \limits_{\Omega}}%
%BeginExpansion
{\displaystyle\int\limits_{\Omega}}
%EndExpansion
v_{k}^{2}  &  \leqslant\left[  -2+C_{10}C_{12}\left\Vert v_{k}\right\Vert
_{L^{1}(\Omega)}+\varepsilon\right]  \left\Vert \nabla v_{k}\right\Vert
_{L^{2}(\Omega)}^{2}+\varepsilon\left\Vert v_{k}\right\Vert _{L^{2}(\Omega
)}^{2}+\nonumber\\
&  +\left\{  C_{8}+\frac{1}{4\varepsilon}\left\Vert v_{k}\right\Vert
_{L^{1}(\Omega)}\left[  C_{11}\left(  C_{9}+\sigma+C_{10}C_{12}\left\Vert
v_{k}\right\Vert _{L^{1}(\Omega)}\right)  \right]  ^{2}\right\}  \left\Vert
v_{k}\right\Vert _{L^{1}(\Omega)}. \label{ess9}%
\end{align}
We estimate the last term of the right-hand side of the last inequality using
H\"{o}lder's inequality and\ a Sobolev embedding%
\[
\left\Vert v_{k}\right\Vert _{L^{1}(\Omega)}\leqslant\left\Vert v_{k}%
\right\Vert _{L^{4}(\Omega)}\left\vert \Omega_{k}\right\vert ^{3/4}\leqslant
C_{15}\left\Vert v_{k}\right\Vert _{H^{1}(\Omega)}\left\vert \Omega
_{k}\right\vert ^{3/4}%
\]
where $C_{15}$ is a constant independent of $t$. Using this inequality and
Cauchy's inequality, we obtain from (\ref{ess9})%
\begin{align}
\frac{d}{dt}%
%TCIMACRO{\dint \limits_{\Omega}}%
%BeginExpansion
{\displaystyle\int\limits_{\Omega}}
%EndExpansion
v_{k}^{2}e^{w}+\sigma%
%TCIMACRO{\dint \limits_{\Omega}}%
%BeginExpansion
{\displaystyle\int\limits_{\Omega}}
%EndExpansion
v_{k}^{2}  &  \leqslant\left[  -2+C_{10}C_{12}\left\Vert v_{k}\right\Vert
_{L^{1}(\Omega)}+\varepsilon+\varepsilon^{\prime}\right]  \left\Vert \nabla
v_{k}\right\Vert _{L^{2}(\Omega)}^{2}+\left(  \varepsilon+\varepsilon^{\prime
}\right)  \left\Vert v_{k}\right\Vert _{L^{2}(\Omega)}^{2}+\nonumber\\
&  +\frac{C_{15}^{2}}{4\varepsilon^{\prime}}\left\{  C_{8}+\frac
{1}{4\varepsilon}\left\Vert v_{k}\right\Vert _{L^{1}(\Omega)}\left[
C_{11}\left(  C_{9}+\sigma+C_{10}C_{12}\left\Vert v_{k}\right\Vert
_{L^{1}(\Omega)}\right)  \right]  ^{2}\right\}  ^{2}\left\vert \Omega
_{k}\right\vert ^{3/2}. \label{ess10}%
\end{align}
We choose $\varepsilon$ and $\varepsilon^{\prime}$ small enough such that
$\varepsilon+\varepsilon^{\prime}<\min\left\{  1,\sigma/2\right\}  $. Taking
into account Proposition \ref{P1}, it follows that there exists $k_{1}>0$
sufficiently large such that, for every $k>k_{1}$, the coefficient of
$\left\Vert \nabla v_{k}\right\Vert _{L^{2}(\Omega)}^{2}$ is negative. Taking
into account (\ref{winf}) and (\ref{vkl1}) we obtain from (\ref{ess10})%
\begin{equation}
\frac{d}{dt}%
%TCIMACRO{\dint \limits_{\Omega}}%
%BeginExpansion
{\displaystyle\int\limits_{\Omega}}
%EndExpansion
v_{k}^{2}e^{w}+\frac{\sigma}{2e^{\left\Vert w_{0}\right\Vert _{L^{\infty
}\left(  \Omega\right)  }}}%
%TCIMACRO{\dint \limits_{\Omega}}%
%BeginExpansion
{\displaystyle\int\limits_{\Omega}}
%EndExpansion
v_{k}^{2}e^{w}\leqslant C_{16}\left\vert \Omega_{k}\right\vert ^{3/2}
\label{c16n}%
\end{equation}
for all $k>k_{1}$, where%
\begin{align*}
C_{16}  &  =C_{16}(k,\delta,\left\Vert w_{0}\right\Vert _{L^{\infty}\left(
\Omega\right)  },M_{0})=\\
&  =\frac{C_{15}^{2}}{4\varepsilon^{\prime}}\left\{  C_{8}+\frac
{1}{4\varepsilon}\left\vert \Omega\right\vert \max\left\{  1,M_{0}\right\}
\left[  C_{11}\left(  C_{9}+\sigma+C_{10}C_{12}\left\vert \Omega\right\vert
\max\left\{  1,M_{0}\right\}  \right)  \right]  ^{2}\right\}  ^{2}.
\end{align*}
One can notice, using (\ref{c8}), (\ref{c9}) and (\ref{c10}), that $C_{16}%
$\ is a polynomial of degree $4$ in $k$. In the first place we shall focus on
obtaining an inequality similar to (\ref{c16n}) where the constant appearing
in the right-hand side is independent on $k$. Let $\alpha$ be the dominant
coefficient of $C_{16}$ as a polynomial in $k$. It is a constant depending
only on the initial data of the system. On the other hand, we have (see
\cite{rey})
\[
\int\limits_{\Omega}v^{q+1}=(q+1)\int\limits_{0}^{\infty}s^{q}\left\vert
\Omega_{s}\right\vert ds,\quad q\geqslant1.
\]
We obtain, using these facts, a bound for the right-hand side of the
inequality (\ref{c16n}). Namely, taking into account Proposition \ref{Lp}, we
get first%
\[
(k-1)^{q}\left\vert \Omega_{k}\right\vert <\int\limits_{k-1}^{k}%
s^{q}\left\vert \Omega_{s}\right\vert ds<\int\limits_{0}^{\infty}%
s^{q}\left\vert \Omega_{s}\right\vert ds=\frac{1}{q+1}\left\Vert v\right\Vert
_{L^{q+1}(\Omega)}^{q+1}<C_{17}%
\]
where $C_{17}$ is a constant independent on $k$ and on $t$. From the last
inequality, taking $q=16$, we obtain%
\[
(k-1)^{4}\left\vert \Omega_{k}\right\vert ^{1/4}<C_{17}^{1/4}.
\]
It follows that there exists $k_{2}>0$ such that for every $k>k_{2}$,
\[
C_{16}\left\vert \Omega_{k}\right\vert ^{1/4}<(\alpha+1)C_{17}^{1/4}=C_{18}%
\]
which implies, from (\ref{c16n})%
\begin{equation}
\frac{d}{dt}%
%TCIMACRO{\dint \limits_{\Omega}}%
%BeginExpansion
{\displaystyle\int\limits_{\Omega}}
%EndExpansion
v_{k}^{2}e^{w}+\frac{\sigma}{2e^{\left\Vert w_{0}\right\Vert _{L^{\infty
}\left(  \Omega\right)  }}}%
%TCIMACRO{\dint \limits_{\Omega}}%
%BeginExpansion
{\displaystyle\int\limits_{\Omega}}
%EndExpansion
v_{k}^{2}e^{w}\leqslant C_{18}\left\vert \Omega_{k}\right\vert ^{5/4}.
\label{c18}%
\end{equation}
In this way we obtained an inequality similar to (\ref{c16n}) where the
constant $C_{18}$ does not depend on $k$.

Since $v_{0}\in L^{\infty}(\Omega)$, there exists $k_{3}>0$ such that
$\left\Vert v_{k}(0)\right\Vert _{_{L^{\infty}\left(  \Omega\right)  }}=0$ for
all $k>k_{3}$. For $k>\max\{k_{1},k_{2},k_{3}\}$, we deduce from (\ref{c18})
using Gronwall's inequality
\begin{equation}
\left\Vert v_{k}(t)\right\Vert _{L^{2}(\Omega)}^{2}\leqslant\left\Vert
e^{w/2}v_{k}(t)\right\Vert _{L^{2}(\Omega)}^{2}\leqslant\frac{2C_{18}%
e^{\left\Vert w_{0}\right\Vert _{L^{\infty}\left(  \Omega\right)  }}}{\sigma
}\left(  1-e^{-\frac{\sigma}{2e^{\left\Vert w_{0}\right\Vert _{L^{\infty
}\left(  \Omega\right)  }}}t}\right)  \left(  \sup_{t\geqslant0}\left\vert
\Omega_{k}(t)\right\vert \right)  ^{5/4}. \label{vsup1}%
\end{equation}
On the other hand, taking into account that $\Omega_{l}\subset\Omega_{k}$ for
$l>k>0$,
\begin{equation}
\left\Vert v_{k}(t)\right\Vert _{L^{2}(\Omega)}^{2}\geqslant\int
\limits_{\Omega_{l}(t)}v_{k}^{2}\geqslant\left(  l-k\right)  ^{2}\left\vert
\Omega_{l}(t)\right\vert . \label{vsup2}%
\end{equation}
Taking the supremum on $t\geqslant0$ in the last relation, (\ref{vsup1})
implies%
\[
\left(  l-k\right)  ^{2}\sup_{t\geqslant0}\left\vert \Omega_{l}(t)\right\vert
\leqslant\frac{2C_{18}e^{\left\Vert w_{0}\right\Vert _{L^{\infty}\left(
\Omega\right)  }}}{\sigma}\left(  \sup_{t\geqslant0}\left\vert \Omega
_{k}(t)\right\vert \right)  ^{5/4}%
\]
for $l>k>\max\{k_{1},k_{2},k_{3}\}$. Obviously the function $k\mapsto
\sup_{t\geqslant0}\left\vert \Omega_{k}(t)\right\vert $ is decreasing, so we
can apply Lemma 4.1\ from \cite{fang}. It follows that there exists
\[
k_{0}=\max\{k_{1},k_{2},k_{3}\}+\left(  \frac{2^{11}C_{18}e^{\left\Vert
w_{0}\right\Vert _{L^{\infty}\left(  \Omega\right)  }}}{\sigma}\right)
^{1/2}\left\vert \Omega\right\vert ^{1/8}%
\]
such that%
\[
\sup_{t\geqslant0}\left\vert \Omega_{k}(t)\right\vert =0
\]
for all $k\geqslant k_{0}$. This concludes the proof.
\end{proof}

\begin{remark}
The $L^{\infty}$ bound can also be proved using the iterative technique of
Alikakos \cite{ali}. We have chosen the method presented here (inspired by an
idea of Gajewski and Zacharias \cite{gaj}) mainly for aesthetic reasons.

It is obvious that the conclusion of Proposition \ref{estimat_infinity}
remains valid also in the case of the classical solution $u$ of the system
(\ref{eq1})-(\ref{ic2}).
\end{remark}

\subsection{A priori estimates for $\nabla v$, $\nabla w$ and $\Delta v$}

\label{estimat4}

Taking the initial data in $(W^{2,q})^{2}$, $q>2$, in \cite{guar} the authors
derive $L^{\infty}(0,t;L^{p}(\Omega))$, $1<p\leqslant q$ bounds for $\nabla
v$, $\nabla w$ and $\Delta v$. Based on these estimates, under $L^{\infty}$
bounds assumptions, they show the global existence of weak solutions.
Moreover, under the same hypotheses on the initial data, it is proved that the
solution has some regularity properties.

By a different strategy we establish hereafter a priori bounds for $\left\Vert
\nabla v\right\Vert _{L^{1}(0,t;L^{p}(\Omega))}$, $\left\Vert \Delta
v\right\Vert _{L^{1}(0,t;L^{2}(\Omega))}$ and $\sup\limits_{t\in\left[
0,T\right]  }\left\Vert \nabla w\right\Vert _{L^{p}(\Omega)}$, $p\geqslant2$.
We mention that both lines of computation could be applied, as an intermediary
step, in order to obtain classical solutions. However, using the a priori
bounds which are given in what follows, one may prove the existence of the
weak solutions (in the sense of \cite{guar}) of the problem (\ref{eq1}%
)-(\ref{ic2}) starting with the initial data in, for example, $\left(
H^{1}(\Omega)\cap L^{\infty}(\Omega)\right)  \times W^{1,4}(\Omega)$.

\begin{lemma}
\label{p7}Let $\Omega\subset\mathbb{R}^{2}$ be a bounded domain. If the
hypothesis $(\mathcal{H})$ is satisfied, $v_{0}\in H^{1}(\Omega)\cap
L^{\infty}(\Omega)$ and $w_{0}\in L^{\infty}(\Omega)$, we have%
\begin{align}
\left\Vert v_{t}\right\Vert _{L^{2}(\Omega_{T})}  &  \leqslant C_{20}%
\label{vs}\\
\left\Vert \nabla v\right\Vert _{L^{2}(\Omega)}  &  \leqslant C_{20}\text{ }
\label{grv}%
\end{align}
for all $t>0$, where $C_{20}$\ is a constant independent on $t$.
\end{lemma}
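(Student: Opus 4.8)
The natural approach is a standard energy estimate: test equation (\ref{eq1v}) with $v_t$, integrate over $\Omega$, and absorb the first-order and reaction terms using the $L^\infty$ bounds already established. First I would multiply (\ref{eq1v}) by $v_t$ and integrate over $\Omega$; the diffusion term $\int_\Omega \Delta v\, v_t$ produces $-\frac{d}{dt}\,\frac12\int_\Omega|\nabla v|^2$ after integration by parts (the boundary term vanishes by (\ref{bcv})), while the left side gives $\int_\Omega v_t^2$. This yields
\begin{equation*}
\int_\Omega v_t^2\,dx + \frac12\frac{d}{dt}\int_\Omega |\nabla v|^2\,dx = \int_\Omega \left(\nabla v\cdot\nabla w + e^w v^2 w^\beta + \delta v(1-ve^w)\right)v_t\,dx.
\end{equation*}
Then I would estimate each term on the right by Cauchy's inequality with a small parameter $\varepsilon$, so that the $\int_\Omega v_t^2$ contributions can be absorbed into the left-hand side, leaving terms controlled by $\|\nabla v\|_{L^2(\Omega)}^2$ (times bounded coefficients) plus terms that are uniformly bounded in view of Proposition \ref{estimat_infinity} ($\|v\|_{L^\infty}\le C$), (\ref{winf}) ($0<w\le w_0$), and the uniform $L^p$ bounds of Proposition \ref{Lp}.

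The one term needing care is $\int_\Omega \nabla v\cdot\nabla w\, v_t$, since it involves $\nabla w$. Here I would use equation (\ref{eq2v}): differentiating $w$ in space, $\nabla w_t = -\nabla(e^w w^\beta v)$, which lets me control $\nabla w$ in terms of $v$ and $\nabla v$ and $\nabla w$ itself via a Gronwall-type argument on $\|\nabla w\|_{L^2(\Omega)}$ — but in fact it is cleaner to note that from (\ref{wexpr}) and the uniform bounds one gets $0<w\le w_0$ together with control on $\nabla w$ through $w_0$ and the accumulated integral of $u$ (equivalently $v$); alternatively one may invoke the gradient estimate for $\nabla w$ referenced in Subsection \ref{estimat4}. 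With $\|\nabla w\|_{L^2(\Omega)}$ under control, $\int_\Omega \nabla v\cdot\nabla w\, v_t \le \varepsilon\|v_t\|_{L^2(\Omega)}^2 + C_\varepsilon \|\nabla v\|_{L^2(\Omega)}^2\|\nabla w\|_{L^\infty(\Omega)}^2$ or an analogous bound after using the $L^4$–$H^1$ Sobolev embedding in two dimensions; the coefficient multiplying $\|\nabla v\|_{L^2(\Omega)}^2$ stays bounded uniformly in $t$.

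Putting these estimates together I arrive at a differential inequality of the form
\begin{equation*}
\frac{d}{dt}\int_\Omega|\nabla v|^2\,dx + \int_\Omega v_t^2\,dx \leqslant C_1\int_\Omega|\nabla v|^2\,dx + C_2,
\end{equation*}
with $C_1,C_2$ independent of $t$. Dropping the nonnegative $\int_\Omega v_t^2$ term on the left and applying Gronwall's lemma would give an exponentially growing bound on $\|\nabla v\|_{L^2}^2$, which is \emph{not} what we want (we need a bound uniform in $t$). To fix this, I would instead show that the coefficient $C_1$ can be taken to carry a good sign, or — more robustly — add a multiple of $\int_\Omega |\nabla v|^2$ to both sides and use the fact that $\int_\Omega v_t^2$ actually dominates $\int_\Omega|\nabla v|^2$ modulo lower order terms (via an elliptic-type estimate applied to (\ref{eq1v}) rewritten as $\Delta v = v_t - (\text{bounded terms})$), turning the inequality into $\frac{d}{dt}y + cy \le C_2$ with $c>0$; then Gronwall gives the uniform bound $\|\nabla v\|_{L^2(\Omega)}\le C_{20}$. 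Integrating the same inequality in $t$ over $(0,T)$ and using the uniform bound on $y(t)$ then yields $\|v_t\|_{L^2(\Omega_T)}\le C_{20}$. The main obstacle is precisely this last point: extracting a dissipative structure strong enough to give a \emph{time-uniform} gradient bound rather than an exponentially growing one, which is why the sign of the reaction term $\delta v(1-ve^w)$ and the smallness exploited through the cut-off functions $v_k$ (as in the earlier propositions) must be used carefully.
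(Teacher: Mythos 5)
Your plan runs into two genuine obstructions, and both are symptoms of the same missing idea. First, the term $\int_\Omega \nabla v\cdot\nabla w\, v_t$ cannot be handled the way you suggest: the lemma assumes only $w_0\in L^{\infty}(\Omega)$, so no bound on $\nabla w$ (in $L^\infty$, $L^4$, or even $L^2$) is available at this stage; the gradient estimates for $\nabla w$ that you want to invoke are proved only later (Lemma \ref{p3.}, Lemma \ref{p5}) and themselves rely on the conclusion (\ref{grv}) of the present lemma, so the argument would be circular. Second, your fallback for repairing the non-dissipative inequality $\frac{d}{dt}\int|\nabla v|^2+\int v_t^2\le C_1\int|\nabla v|^2+C_2$ does not work: $\int_\Omega v_t^2$ does not dominate $\int_\Omega|\nabla v|^2$, and the elliptic rewriting $\Delta v=v_t-(\dots)$ again needs control of $\nabla v\cdot\nabla w$, i.e.\ of $\nabla w$, which you do not have. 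As stated, Gronwall would only give a bound growing exponentially in $t$, contradicting the claimed time-uniform constant $C_{20}$.

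The paper avoids both problems with a weighted multiplier: test (\ref{eq1v}) with $e^{w}v_t$ instead of $v_t$. Integrating by parts, $\int_\Omega e^{w}\Delta v\,v_t=-\int_\Omega e^{w}\nabla w\cdot\nabla v\,v_t-\int_\Omega e^{w}\nabla v_t\cdot\nabla v$, so the awkward convection term cancels identically, while the remaining term gives $-\frac12\frac{d}{dt}\int_\Omega e^{w}|\nabla v|^2+\frac12\int_\Omega e^{w}w_t|\nabla v|^2$, and by (\ref{eq2v}) the last integral equals $-\frac12\int_\Omega e^{2w}vw^{\beta}|\nabla v|^2\le 0$, i.e.\ it has a good sign. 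The reaction terms are then written as exact time derivatives of bounded quantities plus terms proportional to $-w_t$, whose time integral is controlled by $\int_\Omega(w_0-w)\le\|w_0\|_{L^1(\Omega)}$ uniformly in $t$ (using (\ref{winf}) and the $L^\infty$ bound on $v$ from Proposition \ref{estimat_infinity}). A single integration in time then yields $\int_0^t\int_\Omega v_s^2+\int_\Omega e^{w}|\nabla v|^2\le \int_\Omega e^{w_0}|\nabla v_0|^2+C_{19}$ with $C_{19}$ independent of $t$, giving (\ref{vs}) and (\ref{grv}) simultaneously, with no Gronwall argument and no reference to $\nabla w$ at all. This weighted-energy cancellation is the key step absent from your proposal.
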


\begin{proof}
Taking $e^{w}v_{t}$ as a test function in the equation (\ref{eq1v}) and
integrating in space, we obtain%
\begin{equation}
\int\limits_{\Omega}e^{w}v_{t}^{2}+\frac{1}{2}\frac{d}{dt}\int\limits_{\Omega
}\left(  e^{w}\left\vert \nabla v\right\vert ^{2}\right)  =-\frac{1}{2}%
\int\limits_{\Omega}e^{2w}vw^{\beta}\left\vert \nabla v\right\vert ^{2}%
+\int\limits_{\Omega}e^{w}\left[  e^{w}v^{2}w^{\beta}+\delta v\left(
1-ve^{w}\right)  \right]  v_{t}. \label{a11}%
\end{equation}
In order to estimate the last term from the right-hand side of (\ref{a11}) we
take into account the following inequalities
\begin{align}
&  \int\limits_{\Omega}e^{2w}w^{\beta}v^{2}v_{t}\leqslant\frac{1}{2}%
\int\limits_{\Omega}e^{w}v_{t}^{2}-\frac{1}{2}e^{2\left\Vert w_{0}\right\Vert
_{L^{\infty}\left(  \Omega\right)  }}\left\Vert w_{0}\right\Vert _{L^{\infty
}\left(  \Omega\right)  }\left\Vert v\right\Vert _{L^{\infty}(\Omega)}^{3}%
\int\limits_{\Omega}\frac{\partial w}{\partial t}\label{1_P}\\
&  \delta\int\limits_{\Omega}e^{w}vv_{t}\leqslant\frac{\delta}{2}\frac{d}%
{dt}\int\limits_{\Omega}e^{w}v^{2}-\frac{\delta}{2}e^{\left\Vert
w_{0}\right\Vert _{L^{\infty}(\Omega)}}\left\Vert v\right\Vert _{L^{\infty
}(\Omega)}^{2}\int\limits_{\Omega}\frac{\partial w}{\partial t}\label{2_P}\\
&  -\delta\int\limits_{\Omega}e^{2w}v^{2}v_{t}\leqslant-\frac{\delta}{3}%
\frac{d}{dt}\int\limits_{\Omega}e^{2w}v^{3}. \label{3_P}%
\end{align}
Substituting (\ref{1_P}), (\ref{2_P}) and (\ref{3_P}) into (\ref{a11}), after
that integrating in time and taking into account (\ref{winf}) we obtain%
\begin{equation}
\int\limits_{0}^{t}\int\limits_{\Omega}v_{s}^{2}+\int\limits_{\Omega}%
e^{w}\left\vert \nabla v\right\vert ^{2}\leqslant\int\limits_{\Omega}e^{w_{0}%
}\left\vert \nabla v_{0}\right\vert ^{2}+C_{19} \label{vsest}%
\end{equation}
where%
\begin{align*}
C_{19}  &  =e^{\left\Vert w_{0}\right\Vert _{L^{\infty}(\Omega)}}\left\Vert
v\right\Vert _{L^{\infty}(0,t;L^{\infty}(\Omega))}^{2}\left[  e^{\left\Vert
w_{0}\right\Vert _{L^{\infty}(\Omega)}}\left\Vert w_{0}\right\Vert
_{L^{\infty}(\Omega)}^{\beta}\left\Vert v\right\Vert _{L^{\infty
}(0,t;L^{\infty}(\Omega))}+\delta\right]  \left\Vert w_{0}\right\Vert
_{L^{1}(\Omega)}+\\
&  +\delta e^{\left\Vert w_{0}\right\Vert _{L^{\infty}(\Omega)}}\left\Vert
v\right\Vert _{L^{2}(\Omega)}^{2}+\frac{2\delta}{3}\int\limits_{\Omega
}e^{2w_{0}}v_{0}^{3}.
\end{align*}
The last inequality implies (\ref{vs}) and (\ref{grv}) where $C_{20}=\left(
e^{\left\Vert w_{0}\right\Vert _{L^{\infty}(\Omega)}}\left\Vert \nabla
v_{0}\right\Vert _{L^{2}(\Omega)}^{2}+C_{19}\right)  ^{1/2}$.
\end{proof}

\begin{lemma}
\label{p3.}Let $\Omega\subset\mathbb{R}^{2}$ be a bounded domain. If the
hypothesis $(\mathcal{H})$ is satisfied, $v_{0}\in H^{1}(\Omega)\cap
L^{\infty}(\Omega)$ and $w_{0}\in W^{1,4}(\Omega)$, we have
\begin{equation}
\left\Vert \Delta v\right\Vert _{L^{1}(0,t;L^{2}(\Omega))}\leqslant e\left(
n+1\right)  !k(T_{0}) \label{lap}%
\end{equation}
for all $t\in\left[  0,\min\left\{  \left(  n+1\right)  T_{0},T\right\}
\right]  $, $n\in\mathbb{N}\backslash\left\{  0\right\}  $, where $T_{0}$ is a
constant independent on $t$ and $k$ is a function with liniar growing which
will be given later.
\end{lemma}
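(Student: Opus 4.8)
The plan is to estimate $\|\Delta v\|_{L^1(0,t;L^2(\Omega))}$ by applying parabolic $L^2$-regularity to the equation (\ref{eq1v}) and then controlling the resulting right-hand side by means of the bounds already established: the uniform $L^\infty$ bound on $v$ (Proposition \ref{estimat_infinity}), the bound $0<w\leqslant w_0$ together with (\ref{winf}), and the $L^2$-in-time bound on $v_t$ from Lemma \ref{p7}. The natural starting point is to test the equation $\partial_t v=\Delta v+\nabla v\cdot\nabla w+e^w v^2 w^\beta+\delta v(1-ve^w)$ with $-\Delta v$ (equivalently, to use the standard inequality $\|\Delta v\|_{L^2(\Omega)}\leqslant \|v_t\|_{L^2(\Omega)}+\|\nabla v\cdot\nabla w\|_{L^2(\Omega)}+\|\text{zero-order terms}\|_{L^2(\Omega)}$, valid because the Neumann boundary condition (\ref{bcv}) holds). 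The zero-order terms are bounded in $L^\infty(\Omega)$ uniformly in time, so their $L^2(\Omega)$-norm is a constant; integrating over $[0,t]$ with $t\leqslant (n+1)T_0$ contributes a term linear in $t$, hence of the claimed form $k(T_0)$ times an integer factor.

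\textbf{Key steps in order.} First, I would write $\|\Delta v(\cdot,s)\|_{L^2(\Omega)}\leqslant \|v_s(\cdot,s)\|_{L^2(\Omega)}+\|\nabla v(\cdot,s)\cdot\nabla w(\cdot,s)\|_{L^2(\Omega)}+C$, with $C$ depending only on $\|v\|_{L^\infty}$, $\|w_0\|_{L^\infty}$ and $\delta$. Second, integrate in time: by Cauchy-Schwarz in $s$, $\int_0^t\|v_s\|_{L^2(\Omega)}\,ds\leqslant t^{1/2}\|v_t\|_{L^2(\Omega_t)}\leqslant t^{1/2}C_{20}$ by (\ref{vs}); the constant term gives $Ct$. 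Third — and this is the delicate term — handle $\int_0^t\|\nabla v\cdot\nabla w\|_{L^2(\Omega)}\,ds$. Here I would estimate $\|\nabla v\cdot\nabla w\|_{L^2(\Omega)}\leqslant \|\nabla v\|_{L^4(\Omega)}\|\nabla w\|_{L^4(\Omega)}$ and invoke the Gagliardo–Nirenberg inequality in two dimensions, $\|\nabla v\|_{L^4(\Omega)}\leqslant C\|\Delta v\|_{L^2(\Omega)}^{1/2}\|\nabla v\|_{L^2(\Omega)}^{1/2}+C\|\nabla v\|_{L^2(\Omega)}$, with $\|\nabla v\|_{L^2(\Omega)}\leqslant C_{20}$ by (\ref{grv}). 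The factor $\|\nabla w\|_{L^4(\Omega)}$ must be controlled; differentiating (\ref{eq2v}) in $x$ gives an ODE for $\nabla w$ whose coefficients involve $\nabla v$ and $\nabla w$ themselves, so one obtains $\sup_{[0,t]}\|\nabla w\|_{L^4(\Omega)}$ in terms of $\|\nabla w_0\|_{L^4(\Omega)}$ (finite since $w_0\in W^{1,4}$) and $\int_0^t\|\nabla v\|_{L^4(\Omega)}\,ds$ via a Gronwall-type argument.

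\textbf{Main obstacle.} The crux is that the term $\nabla v\cdot\nabla w$ couples $\|\Delta v\|_{L^2}$ to $\|\nabla w\|_{L^4}$ and back to $\|\Delta v\|_{L^2}$ (through Gagliardo–Nirenberg), so a naive estimate closes only in a nonlinear, possibly short-time, way — this is exactly why the statement is phrased as a bound on $[0,\min\{(n+1)T_0,T\}]$ with a factorial growth $e(n+1)!\,k(T_0)$ rather than a clean uniform-in-time bound. The plan is therefore to fix a small time step $T_0$ (chosen, as in Section \ref{local}, so that the relevant nonlinear feedback can be absorbed over $[0,T_0]$, e.g. using Young's inequality to move $\frac12\|\Delta v\|_{L^2(\Omega)}$ to the left side after squaring and integrating), obtain the bound on $[0,T_0]$, and then iterate on $[nT_0,(n+1)T_0]$; at each step the bound is multiplied by a factor, which after $n+1$ steps yields the $(n+1)!$-type constant. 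Tracking the precise dependence of the iteration constant $k$ on the data — and verifying it does not itself blow up as the interval lengthens — is the bookkeeping that makes this the hard part; the individual functional-analytic inequalities (parabolic regularity, Gagliardo–Nirenberg, Gronwall, the ODE representation (\ref{wexpr}) for $w$) are all standard.
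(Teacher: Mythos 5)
Your proposal follows essentially the same route as the paper: bounding $\Vert\Delta v\Vert_{L^{2}}$ through the equation by the $v_{t}$-term (controlled via (\ref{vs})), the zero-order terms, and the coupling term $\nabla v\cdot\nabla w$, which is handled exactly as in the paper by estimating $\Vert\nabla w\Vert_{L^{4}}$ from the equation for $\nabla w_{t}$ in terms of $\Vert\nabla w_{0}\Vert_{L^{4}}$ plus $\int_{0}^{t}\Vert\nabla v\Vert_{L^{4}}$, using Gagliardo--Nirenberg to reintroduce $\Vert\Delta v\Vert_{L^{2}}$, absorbing it for $t\leqslant T_{0}$ small, and then iterating over intervals $[nT_{0},(n+1)T_{0}]$ to produce the factorial constant. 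This matches the paper's proof in both structure and key estimates.
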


\begin{proof}
From (\ref{eq1v}) we obtain for every $0\leqslant t<T$%
\begin{equation}
\int\limits_{0}^{t}\left\Vert \Delta v\right\Vert _{L^{2}(\Omega)}%
\leqslant\int\limits_{0}^{t}\left\Vert v_{s}\right\Vert _{L^{2}(\Omega)}%
+\int\limits_{0}^{t}\left\Vert \nabla w\cdot\nabla v\right\Vert _{L^{2}%
(\Omega)}+\int\limits_{0}^{t}\left\Vert h(v,w)\right\Vert _{L^{2}(\Omega)}
\label{de}%
\end{equation}
where%
\[
h(v,w)=e^{w}v^{2}w^{\beta}+\delta v(1-ve^{w}).
\]
We estimate the first term from (\ref{de}) using (\ref{vs}) and the H\"{o}lder
inequality%
\begin{equation}
\int\limits_{0}^{t}\left\Vert v_{s}\right\Vert _{L^{2}(\Omega)}\leqslant
t^{1/2}\left(  \int\limits_{0}^{t}\int\limits_{\Omega}\left\vert
v_{s}\right\vert ^{2}\right)  ^{1/2}\leqslant\sqrt{C_{20}}\,t^{1/2}.
\label{t1}%
\end{equation}
In order to obtain an estimate for $\left\Vert \nabla w\right\Vert
_{L^{4}(\Omega)}$, we deduce from the equation (\ref{eq2v})
\[
\nabla w_{t}=-e^{w}w^{\beta}v\nabla w-\beta e^{w}w^{\beta-1}v\nabla
w-e^{w}w^{\beta}\nabla v.
\]
Multiplying the last relation by $\nabla w\left\vert \nabla w\right\vert ^{2}$
we obtain by integration%
\begin{equation}
\left\Vert \nabla w\right\Vert _{L^{4}(\Omega)}\leqslant C_{22}+C_{21}%
\int\limits_{0}^{t}\left\Vert \nabla v\right\Vert _{L^{4}(\Omega)} \label{dw}%
\end{equation}
where $C_{21}=e^{\left\Vert w_{0}\right\Vert _{L^{\infty}(\Omega)}}\left\Vert
w_{0}\right\Vert _{L^{\infty}(\Omega)}^{\beta}$, $C_{22}=\left\Vert \nabla
w_{0}\right\Vert _{L^{4}(\Omega)}$. Taking into account (\ref{dw}) and the
H\"{o}lder inequality we obtain
\begin{equation}
\left\Vert \nabla w\cdot\nabla v\right\Vert _{L^{2}(\Omega)}\leqslant
C_{22}\left\Vert \nabla v\right\Vert _{L^{4}(\Omega)}+\frac{C_{21}}{2}\frac
{d}{dt}\left[  \int\limits_{\tau}^{t}\left\Vert \nabla v\right\Vert
_{L^{4}(\Omega)}\right]  ^{2}. \label{vw2}%
\end{equation}
In order to estimate the second term of (\ref{vw2}), we obtain from the
H\"{o}lder inequality and (\ref{grv})
\begin{equation}
\left[  \int\limits_{0}^{t}\left\Vert \nabla v\right\Vert _{L^{4}(\Omega
)}\right]  ^{2}\leqslant C_{23}^{2}t^{1/2}\left(  \int\limits_{0}%
^{t}\left\Vert \Delta v\right\Vert _{L^{2}(\Omega)}\right)  \left(
\int\limits_{0}^{t}\int\limits_{\Omega}\left\vert \nabla v\right\vert
^{2}\right)  ^{1/2}\leqslant C_{23}^{2}\sqrt{C_{20}}t\int\limits_{0}%
^{t}\left\Vert \Delta v\right\Vert _{L^{2}(\Omega)}. \label{es2}%
\end{equation}
We estimate now the second term from (\ref{de}). Finally, after integration of
(\ref{vw2}) on $\left[  0,t\right]  $, taking into account (\ref{es2}) and
using the Cauchy inequality and the Gagliardo-Nirenberg inequality in order to
estimate the first term of (\ref{vw2}),
\begin{equation}
\int\limits_{0}^{t}\left\Vert \nabla w\cdot\nabla v\right\Vert _{L^{2}%
(\Omega)}\leqslant\varepsilon\int\limits_{0}^{t}\left\Vert \Delta v\right\Vert
_{L^{2}(\Omega)}+\frac{\left(  C_{22}C_{23}\right)  ^{2}}{4\varepsilon}%
\int\limits_{0}^{t}\left\Vert \nabla v\right\Vert _{L^{2}(\Omega)}%
+\frac{C_{21}C_{23}^{2}\sqrt{C_{20}}}{2}t\int\limits_{0}^{t}\left\Vert \Delta
v\right\Vert _{L^{2}(\Omega)}. \label{t2}%
\end{equation}
We estimate the last term from (\ref{de}) using (\ref{lp})
\begin{equation}
\int\limits_{0}^{t}\left\Vert h(v,w)\right\Vert _{L^{2}(\Omega)}\leqslant
C_{24}t \label{t3}%
\end{equation}
where $C_{24}=\delta\left\Vert v\right\Vert _{L^{2}(\Omega)}+e^{\left\Vert
w_{0}\right\Vert _{L^{\infty}(\Omega)}}\left[  \delta+\left\Vert
w_{0}\right\Vert _{L^{\infty}(\Omega)}^{\beta}\right]  \left\Vert v\right\Vert
_{L^{4}(\Omega)}^{2}$.

Taking into account (\ref{grv}), (\ref{t1}), (\ref{t2}) and (\ref{t3}) we
estimate now $\left\Vert \Delta v\right\Vert _{L^{2}(\Omega)}^{2}$ from
(\ref{de})%
\begin{equation}
\left(  1-\varepsilon-\frac{C_{21}C_{23}^{2}\sqrt{C_{20}}}{2}t\right)
\int\limits_{0}^{t}\left\Vert \Delta v\right\Vert _{L^{2}(\Omega)}%
\leqslant\sqrt{C_{20}}t^{1/2}+t\left[  \frac{\left(  C_{22}C_{23}\right)
^{2}}{4\varepsilon}\sqrt{C_{20}}+C_{24}\right]  . \label{de0}%
\end{equation}
We take $\varepsilon=\frac{1}{4}$ and $t$ sufficiently small such that
\[
1-\varepsilon-\frac{C_{21}C_{23}^{2}\sqrt{C_{20}}}{2}t\geqslant\frac{1}%
{2}\Longrightarrow t\leqslant\frac{1}{2C_{21}C_{23}^{2}\sqrt{C_{20}}}=T_{0}%
\]
which implies from (\ref{de0})%
\begin{equation}
\int\limits_{0}^{t}\left\Vert \Delta v\right\Vert _{L^{2}(\Omega)}%
\leqslant2\sqrt{C_{20}}t^{1/2}+2\left[  \left(  C_{22}C_{23}\right)  ^{2}%
\sqrt{C_{20}}+C_{24}\right]  t=k(t). \label{kt}%
\end{equation}
In this way we have obtained the boundedness for $\int\limits_{0}%
^{t}\left\Vert \Delta v\right\Vert _{L^{2}(\Omega)}$ for all $t\in\left[
0,\min\left\{  T_{0},T\right\}  \right]  $. This bound depends on the initial
data considered in $\tau=0$.

If $T_{0}<T$ we can repeat the procedure taking the initial data in
$\tau=T_{0}$ and in a similar manner we obtain%
\begin{align}
\left(  1-\varepsilon-\frac{C_{21}C_{23}^{2}\sqrt{C_{20}}}{2}\left(
t-T_{0}\right)  \right)   &  \int\limits_{T_{0}}^{t}\left\Vert \Delta
v\right\Vert _{L^{2}(\Omega)}\leqslant\sqrt{C_{20}}\left(  t-T_{0}\right)
^{1/2}+\nonumber\\
&  +\left(  t-T_{0}\right)  \left\{  \frac{C_{23}^{2}\sqrt{C_{20}}%
}{4\varepsilon}\left[  2C_{22}^{2}+C_{21}\int\limits_{0}^{T_{0}}\left\Vert
\Delta v\right\Vert _{L^{2}(\Omega)}\right]  +C_{24}\right\}  . \label{est2}%
\end{align}
Taking $\varepsilon=\frac{1}{4}$ and $T_{0}<t\leqslant2T_{0}$ from
(\ref{est2}) we have
\[
\int\limits_{T_{0}}^{t}\left\Vert \Delta v\right\Vert _{L^{2}(\Omega
)}\leqslant k(t-T_{0})+\frac{1}{T_{0}}\left(  t-T_{0}\right)  \int
\limits_{0}^{T_{0}}\left\Vert \Delta v\right\Vert _{L^{2}(\Omega)}.
\]
The last relation is true for all $t\in\left[  T_{0},\min\left\{
2T_{0},T\right\}  \right]  $. More generally, we obtain
\begin{equation}
\int\limits_{nT_{0}}^{t}\left\Vert \Delta v\right\Vert _{L^{2}(\Omega
)}\leqslant k(t-nT_{0})+\frac{n}{T_{0}}\left(  t-nT_{0}\right)  \int
\limits_{0}^{nT_{0}}\left\Vert \Delta v\right\Vert _{L^{2}(\Omega)}
\label{del}%
\end{equation}
for all $t\in\left[  nT_{0},\min\left\{  \left(  n+1\right)  T_{0},T\right\}
\right]  $, if $n\in\mathbb{N}\backslash\left\{  0\right\}  $ is such that
$nT_{0}<T$.

Let us observe that $t-nT_{0}\leqslant T_{0}$ and the function $k(t)$ given by
(\ref{kt}) is nondecreasing. Thus, the inequality (\ref{del}) becomes%
\[
\int\limits_{nT_{0}}^{t}\left\Vert \Delta v\right\Vert _{L^{2}(\Omega
)}\leqslant k(T_{0})+n\int\limits_{0}^{nT_{0}}\left\Vert \Delta v\right\Vert
_{L^{2}(\Omega)}%
\]
for all $t\in\left[  nT_{0},\min\left\{  \left(  n+1\right)  T_{0},T\right\}
\right]  $.

Finally, for all $t\in\left[  0,\min\left\{  \left(  n+1\right)
T_{0},T\right\}  \right]  $, taking into account (\ref{kt}), we obtain
\begin{align*}
\int\limits_{0}^{t}\left\Vert \Delta v\right\Vert _{L^{2}(\Omega)}  &
\leqslant\left(  n+1\right)  !k(T_{0})\left(  \frac{1}{2!}+...+\frac
{1}{\left(  n+1\right)  !}\right)  +\left(  n+1\right)  !\int\limits_{0}%
^{t}\left\Vert \Delta v\right\Vert _{L^{2}(\Omega)}\leqslant\\
&  \leqslant\left(  n+1\right)  !\left(  1+\frac{1}{2!}+...+\frac{1}{\left(
n+1\right)  !}\right)  k(T_{0})\leqslant e\left(  n+1\right)  !k(T_{0}).
\end{align*}

\end{proof}

\begin{remark}
The last inequality holds for all $0<t<T$, and $n$ is maximal with the
property $nT_{0}\leqslant t$. We emphasize that the bound in terms of $n$ is
equivalent with a bound in terms of $t$, of the same type. Hence we obtain%
\begin{equation}
\int\limits_{0}^{t}\left\Vert \Delta v\right\Vert _{L^{2}(\Omega)}%
\leqslant\Psi_{1}(t) \label{lapf}%
\end{equation}
where $\Psi_{1}$ is a increasing function of the time $t$ having the
properties $\underset{t\searrow0}{\lim}\Psi_{1}(t)=0$, $\underset{t\nearrow
T}{\lim}\Psi_{1}(t)<\infty$ for all $T$ finite.
\end{remark}

Henceforth $\Psi_{i}$, $i=2,3,...$ will stand for a generic function of $t$
having the same properties as the function $\Psi_{1}$.

\begin{lemma}
\label{p4}Let $\Omega\subset\mathbb{R}^{2}$ be a bounded domain. If the
hypothesis $(\mathcal{H})$ is satisfied, $v_{0}\in H^{1}(\Omega)\cap
L^{\infty}(\Omega)$ and $w_{0}\in W^{1,4}(\Omega)$, then we have
\begin{equation}
\left\Vert \nabla v\right\Vert _{L^{1}\left(  0,t;L^{p}(\Omega)\right)
}\leqslant\Psi_{2}(t) \label{estim3}%
\end{equation}
for all $0<t<T$ and $2\leqslant p<\infty$.
\end{lemma}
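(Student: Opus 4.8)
The goal is to bound $\|\nabla v\|_{L^{1}(0,t;L^{p}(\Omega))}$ for every $p\in[2,\infty)$, improving on the $L^{2}$-in-space bound of Lemma \ref{p7} and the $L^{1}(0,t;L^{2})$ bound for $\Delta v$ of Lemma \ref{p3.}. The plan is to exploit elliptic regularity for the Neumann problem together with the Gagliardo--Nirenberg interpolation inequality in two space dimensions. Since $v(\cdot,t)$ has homogeneous Neumann boundary data (recall (\ref{bcv})), standard $W^{2,2}$ elliptic estimates give $\|v(\cdot,t)\|_{W^{2,2}(\Omega)}\leqslant C\big(\|\Delta v(\cdot,t)\|_{L^{2}(\Omega)}+\|v(\cdot,t)\|_{L^{2}(\Omega)}\big)$, and the second term is already controlled uniformly in time by Proposition \ref{Lp}. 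Hence $\|v(\cdot,t)\|_{W^{2,2}(\Omega)}\leqslant C\big(\|\Delta v(\cdot,t)\|_{L^{2}(\Omega)}+C_{7}\big)$.

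\smallskip

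\textbf{Key steps.} First I would invoke the two-dimensional Sobolev embedding $W^{1,p}(\Omega)\hookrightarrow$, or more precisely the Gagliardo--Nirenberg inequality
\[
\|\nabla v\|_{L^{p}(\Omega)}\leqslant C\,\|v\|_{W^{2,2}(\Omega)}^{\theta}\,\|v\|_{L^{\infty}(\Omega)}^{1-\theta},
\]
valid in $\Omega\subset\mathbb{R}^{2}$ for a suitable $\theta=\theta(p)\in(0,1)$ (one may take, e.g., $\theta=p/(2p-2)$ or any admissible exponent; the exact value is immaterial). The factor $\|v\|_{L^{\infty}(\Omega)}$ is bounded uniformly in time by Proposition \ref{estimat_infinity}. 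Therefore
\[
\|\nabla v(\cdot,t)\|_{L^{p}(\Omega)}\leqslant C\big(\|\Delta v(\cdot,t)\|_{L^{2}(\Omega)}+C_{7}\big)^{\theta}\leqslant C\,\|\Delta v(\cdot,t)\|_{L^{2}(\Omega)}^{\theta}+C.
\]
Integrating in time over $(0,t)$ and applying Hölder's inequality with exponents $1/\theta$ and $1/(1-\theta)$ to the first term gives
\[
\int_{0}^{t}\|\nabla v(\cdot,s)\|_{L^{p}(\Omega)}\,ds\leqslant C\,t^{1-\theta}\Big(\int_{0}^{t}\|\Delta v(\cdot,s)\|_{L^{2}(\Omega)}\,ds\Big)^{\theta}+C\,t.
\]
Now I would insert the bound (\ref{lapf}) from the Remark following Lemma \ref{p3.}, namely $\int_{0}^{t}\|\Delta v\|_{L^{2}(\Omega)}\leqslant\Psi_{1}(t)$, to conclude
\[
\int_{0}^{t}\|\nabla v(\cdot,s)\|_{L^{p}(\Omega)}\,ds\leqslant C\,t^{1-\theta}\Psi_{1}(t)^{\theta}+C\,t=:\Psi_{2}(t).
\]
Finally I would check that $\Psi_{2}$ inherits the defining properties of the generic functions $\Psi_{i}$: it is increasing in $t$, tends to $0$ as $t\searrow0$ (since both $t^{1-\theta}\Psi_{1}(t)^{\theta}$ and $t$ do), and stays finite as $t\nearrow T$ for every finite $T$ (since $\Psi_{1}(t)$ does). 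This gives (\ref{estim3}) for all $2\leqslant p<\infty$.

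\smallskip

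\textbf{Main obstacle.} The only delicate point is verifying that the Gagliardo--Nirenberg exponent $\theta(p)$ can be chosen strictly less than $1$ for every finite $p$, so that the Hölder step in time is legitimate and the resulting power $t^{1-\theta}$ is genuinely positive; this is precisely where the two-dimensionality of $\Omega$ enters, and it is why the bound degrades (but does not fail) as $p\to\infty$. One must also be slightly careful that the elliptic $W^{2,2}$ estimate for the Neumann Laplacian is applied on a domain with the assumed $C^{l+2}$ (hence $C^{2}$) boundary, which is fine here. Everything else is a routine assembly of the a priori bounds already established in Propositions \ref{Lp} and \ref{estimat_infinity}, Lemma \ref{p7}, and the Remark after Lemma \ref{p3.}.
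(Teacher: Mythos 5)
Your argument is correct, but it follows a genuinely different route from the paper. The paper's proof stays entirely at the level of $\nabla v$: it applies a Gagliardo--Nirenberg inequality of the form $\left\Vert \nabla v\right\Vert _{L^{2^{j}}(\Omega)}\leqslant C_{25}\left\Vert \Delta v\right\Vert _{L^{2}(\Omega)}^{1/2}\left\Vert \nabla v\right\Vert _{L^{2^{j-1}}(\Omega)}^{1/2}$ followed by Cauchy's inequality, integrates in time, and iterates dyadically over $p=2^{j}$, so that the final bound is expressed through $\int_{0}^{t}\left\Vert \Delta v\right\Vert _{L^{2}(\Omega)}$ (i.e. $\Psi_{1}$ from (\ref{lapf})) and the uniform bound (\ref{grv}) on $\left\Vert \nabla v\right\Vert _{L^{2}(\Omega)}$; notably it never invokes the $L^{\infty}$ bound. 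You instead pass through the full $W^{2,2}$ norm via the Neumann elliptic estimate (legitimate here, since $v$ satisfies (\ref{bcv}) and $\partial\Omega$ is $C^{l+2}$), interpolate once against $\left\Vert v\right\Vert _{L^{\infty}(\Omega)}$ from Proposition \ref{estimat_infinity}, and then use H\"{o}lder in time against $\Psi_{1}$; this avoids the dyadic iteration at the cost of using the stronger $L^{\infty}$ input. Both proofs hinge on the same key ingredient, the $L^{1}(0,t;L^{2}(\Omega))$ bound for $\Delta v$. One small correction: your sample exponent $\theta=p/(2p-2)$ is not admissible for large $p$ (scaling forces $\theta\geqslant1-2/p$, and Nirenberg's constraint $\theta\geqslant1/2$ rules out the critical exponent for $2\leqslant p<4$); an admissible choice valid for all $2\leqslant p<\infty$ is $\theta=1-1/p$, obtained from $\left\Vert \nabla v\right\Vert _{L^{p}}\leqslant C\left\Vert \nabla v\right\Vert _{H^{1}}^{1-2/p}\left\Vert \nabla v\right\Vert _{L^{2}}^{2/p}$ together with $\left\Vert \nabla v\right\Vert _{L^{2}}\leqslant C\left\Vert v\right\Vert _{H^{2}}^{1/2}\left\Vert v\right\Vert _{L^{\infty}}^{1/2}$. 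Since you explicitly only need some $\theta\in(0,1]$ (and even $\theta=1$ would do, making the H\"{o}lder step unnecessary), this is a cosmetic slip, not a gap.
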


\begin{proof}
Taking into account the Gagliardo-Nirenberg inequality and the Cauchy
inequality we obtain the following estimate%
\[
\int\limits_{0}^{t}\left\Vert \nabla v\right\Vert _{L^{2^{j}}(\Omega
)}\leqslant\int\limits_{0}^{t}\left\Vert \Delta v\right\Vert _{L^{2}(\Omega
)}+\frac{C_{25}^{2}}{4}\int\limits_{0}^{t}\left\Vert \nabla v\right\Vert
_{L^{2^{j-1}}(\Omega)}%
\]
for $j=2,3,...$. The last inequality implies%
\[
\int\limits_{0}^{t}\left\Vert \nabla v\right\Vert _{L^{2^{j}}(\Omega
)}\leqslant\frac{1-\left(  \frac{C_{25}^{2}}{4}\right)  ^{j-1}}{1-\frac
{C_{25}^{2}}{4}}\int\limits_{0}^{t}\left\Vert \Delta v\right\Vert
_{L^{2}(\Omega)}+\left(  \frac{C_{25}^{2}}{4}\right)  ^{j-1}\int
\limits_{0}^{t}\left\Vert \nabla v\right\Vert _{L^{2}(\Omega)}.
\]
From the last inequality and using (\ref{grv}) and (\ref{lap}) we obtain
(\ref{estim3}) for $p=2^{j}$, $j=2,3,...$. We conclude the lemma from
(\ref{grv}) and taking into account the embeddings of $L^{p}(\Omega)$ spaces.
\end{proof}

\begin{lemma}
\label{p5}Let $\Omega\subset\mathbb{R}^{2}$ be a bounded domain. If the
hypothesis $(\mathcal{H})$ is satisfied, $v_{0}\in H^{1}(\Omega)\cap
L^{\infty}(\Omega)$ and, $w_{0}\in W^{1,p}(\Omega)$, then we have
\[
\left\Vert \nabla w\right\Vert _{L^{p}(\Omega)}\leqslant\Psi_{3}(t)
\]
for all $0<t<T$ and $2<p<\infty$.
\end{lemma}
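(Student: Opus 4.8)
The plan is to obtain a differential inequality for $\left\Vert \nabla w(t)\right\Vert _{L^{p}(\Omega)}$ directly from the equation (\ref{eq2v}), in complete analogy with the computation already carried out for $p=4$ in the proof of Lemma \ref{p3.}, and then to integrate it in time using the control on $\nabla v$ provided by Lemma \ref{p4}.

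First I would differentiate (\ref{eq2v}) in space, which gives, as in the proof of Lemma \ref{p3.},
\[
\nabla w_{t}=-\left(  e^{w}w^{\beta}v+\beta e^{w}w^{\beta-1}v\right)  \nabla
w-e^{w}w^{\beta}\nabla v .
\]
Multiplying by $p\left\vert \nabla w\right\vert ^{p-2}\nabla w$ and integrating over $\Omega$ yields
\[
\frac{d}{dt}\int\limits_{\Omega}\left\vert \nabla w\right\vert ^{p}
=-p\int\limits_{\Omega}\left(  e^{w}w^{\beta}v+\beta e^{w}w^{\beta-1}v\right)
\left\vert \nabla w\right\vert ^{p}-p\int\limits_{\Omega}e^{w}w^{\beta
}\left\vert \nabla w\right\vert ^{p-2}\nabla w\cdot\nabla v .
\]
Since $v\geqslant0$, $w>0$ and $e^{w}>0$, the first term on the right-hand side is nonpositive and may be dropped. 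For the second term I would use the pointwise bound $0<w(x,t)\leqslant w_{0}(x)$ (see (\ref{winf})) to estimate $e^{w}w^{\beta}\leqslant C_{21}=e^{\left\Vert w_{0}\right\Vert _{L^{\infty}(\Omega)}}\left\Vert w_{0}\right\Vert _{L^{\infty}(\Omega)}^{\beta}$, and then H\"{o}lder's inequality with exponents $p$ and $p/(p-1)$:
\[
\left\vert \int\limits_{\Omega}e^{w}w^{\beta}\left\vert \nabla w\right\vert
^{p-2}\nabla w\cdot\nabla v\right\vert \leqslant C_{21}\left\Vert \nabla
v\right\Vert _{L^{p}(\Omega)}\left\Vert \nabla w\right\Vert _{L^{p}(\Omega
)}^{p-1} .
\]
Combining these estimates and dividing by $p\left\Vert \nabla w\right\Vert _{L^{p}(\Omega)}^{p-1}$ (after the usual regularization, replacing $\left\Vert \nabla w\right\Vert _{L^{p}(\Omega)}$ by $(\left\Vert \nabla w\right\Vert _{L^{p}(\Omega)}^{p}+\varepsilon)^{1/p}$ and letting $\varepsilon\searrow0$, if one wants to be careful) leads to the clean inequality
\[
\frac{d}{dt}\left\Vert \nabla w\right\Vert _{L^{p}(\Omega)}\leqslant
C_{21}\left\Vert \nabla v\right\Vert _{L^{p}(\Omega)} .
\]

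Integrating this on $[0,t]$ and using $\nabla w(\cdot,0)=\nabla w_{0}\in L^{p}(\Omega)$ together with (\ref{estim3}), I obtain
\[
\left\Vert \nabla w(t)\right\Vert _{L^{p}(\Omega)}\leqslant\left\Vert \nabla
w_{0}\right\Vert _{L^{p}(\Omega)}+C_{21}\int\limits_{0}^{t}\left\Vert \nabla
v(s)\right\Vert _{L^{p}(\Omega)}\,ds\leqslant\left\Vert \nabla w_{0}
\right\Vert _{L^{p}(\Omega)}+C_{21}\Psi_{2}(t)=:\Psi_{3}(t),
\]
and $\Psi_{3}$ manifestly has the required properties of a generic $\Psi$-function: it is increasing, has finite limit $\left\Vert \nabla w_{0}\right\Vert _{L^{p}(\Omega)}$ as $t\searrow0$, and stays finite as $t\nearrow T$ whenever $T<\infty$. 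I do not anticipate any substantial obstacle: every ingredient — the pointwise bound on $w$, the favourable sign of the zeroth-order terms, and the time-integrability of $\left\Vert \nabla v\right\Vert _{L^{p}(\Omega)}$ — is already in hand. The only point deserving a word of care is the invocation of Lemma \ref{p4}, whose hypotheses require $w_{0}\in W^{1,4}(\Omega)$: for $p\geqslant4$ this is contained in $w_{0}\in W^{1,p}(\Omega)$ on a bounded domain, while for $2<p<4$ one invokes (\ref{estim3}) at exponent $4$ together with the embedding $L^{4}(\Omega)\hookrightarrow L^{p}(\Omega)$ to control $\int_{0}^{t}\left\Vert \nabla v\right\Vert _{L^{p}(\Omega)}$.
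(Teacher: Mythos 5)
Your argument is exactly the paper's proof: differentiate (\ref{eq2v}) in space, multiply by $\nabla w\left\vert \nabla w\right\vert ^{p-2}$, discard the nonpositive zeroth-order terms (using $v\geqslant 0$, $w>0$), bound $e^{w}w^{\beta}$ by $e^{\left\Vert w_{0}\right\Vert _{L^{\infty}(\Omega)}}\left\Vert w_{0}\right\Vert _{L^{\infty}(\Omega)}^{\beta}$ via $0<w\leqslant w_{0}$, apply H\"older, and integrate in time invoking Lemma \ref{p4}. The only delicate point you raise, the hypothesis $w_{0}\in W^{1,4}(\Omega)$ needed by Lemma \ref{p4} when $2<p<4$, is passed over just as silently in the paper (and your workaround still implicitly uses that hypothesis), so your proposal does not diverge from the original in any substantive way.
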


\begin{proof}
We deduce from the equation (\ref{eq2v})
\[
\nabla w_{t}=-e^{w}w^{\beta}v\nabla w-\beta e^{w}w^{\beta-1}v\nabla
w-e^{w}w^{\beta}\nabla v.
\]
Multiplying this last relation by $\nabla w\left\vert \nabla w\right\vert
^{p-2}$ and after that integrating in $\Omega\times(0,t)$,\ we have%
\[
\left\Vert \nabla w\right\Vert _{L^{p}(\Omega)}\leqslant\left\Vert \nabla
w_{0}\right\Vert _{L^{p}(\Omega)}+e^{\left\Vert w_{0}\right\Vert _{L^{\infty
}(\Omega)}}\left\Vert w_{0}\right\Vert _{L^{\infty}(\Omega)}^{\beta}\left\Vert
\nabla v\right\Vert _{L^{1}\left(  0,t;L^{p}(\Omega)\right)  }.
\]
From the last inequality and Lemma \ref{p4} the statement follows.
\end{proof}

We consider the equation (\ref{eq1v}) together with (\ref{bcv}) and
(\ref{ic1v}) like a linear problem in the variable $v$ in the general form
(\ref{Lv})-(\ref{Lv0}), considering%
\begin{align*}
b_{i}(x,t)  &  =\frac{\partial w}{\partial x_{i}}(x,t),\qquad i=1,2\\
b(x,t)  &  =-w_{t}+\delta(1+w_{t}w^{-\beta})\\
\widetilde{F}(x,t)  &  =\widetilde{G}(x,t)=0.
\end{align*}

Taking $\left(  v_{0},w_{0}\right)  \in W^{2-2/p,p}(\Omega)\times
W^{1,\max\left\{  p,4\right\}  }(\Omega)$, $p\geqslant2$ we observe, taking
into account also the above estimates, that the hypotheses of (\cite[Theorem
9.1, cap. IV]{lady})\ are fulfilled (see also \cite[Theorem II.3]{rascle1}).
This implies that for $p\geqslant2$ we have $v\in W_{p}^{2,1}(\Omega_{T})$.
Moreover, taking into account the embedding results in H\"{o}lder spaces we
obtain $v\in C^{2-4/p,1-2/p}\left(  \overline{\Omega}_{T}\right)  $ and
\begin{equation}
\left\vert v\right\vert _{\Omega_{T}}^{\left(  2-4/p\right)  }\leqslant
\Psi_{3}(t) \label{uc}%
\end{equation}
for all $0<t<T$ and $2<p<\infty$.

\begin{lemma}
Let $\Omega\subset\mathbb{R}^{2}$ be a bounded domain. If the hypothesis
$(\mathcal{H})$ is satisfied, $v_{0}\in H^{1}(\Omega)\cap L^{\infty}(\Omega)$
and $w_{0}\in W^{2,p}(\Omega)$, $p>2$, then $w\in C^{2-4/p,1-2/p}\left(
\overline{\Omega}_{T}\right)  $ and
\begin{equation}
\left\vert w\right\vert _{\Omega_{T}}^{\left(  2-4/p\right)  }\leqslant
\Psi_{4}(t) \label{uw}%
\end{equation}
for all $0<t<T$ and $2<p<\infty$.
\end{lemma}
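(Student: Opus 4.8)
The natural starting point is the pointwise-in-$x$ representation obtained by integrating the ordinary differential equation (\ref{eq2v}) in time:
\[
w(x,t)=w_{0}(x)-\int_{0}^{t}\phi\bigl(w(x,s)\bigr)\,v(x,s)\,ds,\qquad\phi(r):=e^{r}r^{\beta}.
\]
Two preliminary observations are needed. First, by the Sobolev embedding $W^{2,p}(\Omega)\hookrightarrow C^{1,1-2/p}(\overline{\Omega})$ for $p>2$, the datum $w_{0}$ belongs to $C^{2-4/p}(\overline{\Omega})$ with $\Vert w_{0}\Vert_{C^{2-4/p}(\overline{\Omega})}\leqslant C\Vert w_{0}\Vert_{W^{2,p}(\Omega)}$; regarded as $t$-independent it already lies in the target space $C^{2-4/p,1-2/p}(\overline{\Omega}_{T})$. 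Second, combining (\ref{wexpr}), the a priori $L^{\infty}$ bound on $u$ (Proposition \ref{estimat_infinity}) and (\ref{winf}), on each cylinder $\overline{\Omega}_{t}$, $t<T$, one has a two-sided bound $0<c_{1}\leqslant w\leqslant\Vert w_{0}\Vert_{L^{\infty}(\Omega)}$, so that $w$ stays in a fixed compact $K\subset(0,\infty)$ and hence $\phi\in C^{3}(K)$. The decisive external input is that $v$ has already been shown to lie in $C^{2-4/p,1-2/p}(\overline{\Omega}_{T})$ with $\vert v\vert_{\Omega_{t}}^{(2-4/p)}\leqslant\Psi_{3}(t)$, cf.\ (\ref{uc}).

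Write $2-4/p=l+i$ with $(i,l)=(1,1-4/p)$ when $p>4$ and $(i,l)=(0,2-4/p)$ when $2<p<4$ (the value $p=4$ being obtained from any slightly larger exponent), so the target space is of the type $C^{l+i,l/2+i/2}$ and Lemmas \ref{lemma1}, \ref{lemma11}, \ref{lemmaR} together with the evident analogue of Lemma \ref{lemma6} are available in it. I would then run a fixed-point/continuation argument in the spirit of Section \ref{local}. Fix $t_{0}<T$. On a short interval $[0,\tau]$ introduce
\[
Y(\tau,R)=\bigl\{\omega\in C^{l+i,l/2+i/2}(\overline{\Omega}_{\tau}):\ \omega(\cdot,0)=w_{0},\ c_{1}/2\leqslant\omega\leqslant 2\Vert w_{0}\Vert_{L^{\infty}(\Omega)},\ \vert\omega\vert_{\Omega_{\tau}}^{(l+i)}\leqslant R\bigr\}
\]
and the map $\mathcal{T}\omega:=w_{0}-\int_{0}^{t}\phi(\omega)\,v\,ds$. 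By Lemma \ref{lemmaR} the composition $\phi\circ\omega$ lies in $C^{l+i,l/2+i/2}(\overline{\Omega}_{\tau})$, with norm controlled by an increasing function of $\vert\omega\vert_{\Omega_{\tau}}^{(l+i)}$; by Lemma \ref{lemma11} the product $\phi(\omega)\,v$ stays there; and by Lemma \ref{lemma6} so does its time integral, the integration contributing the factor $\max\{\tau,\tau^{(1-l)/2}\}$, which is small for $\tau$ small. Choosing $R=R(t_{0})$ comparable to $\vert w_{0}\vert_{\Omega}^{(l+i)}+\vert\phi(w_{0})v_{0}\vert_{\Omega}^{(l)}$ and then $\tau=\tau(t_{0})>0$ small (depending only on $R$, $\Psi_{3}(t_{0})$, $c_{1}$ and $\Vert w_{0}\Vert_{L^{\infty}}$) yields that $\mathcal{T}$ maps $Y(\tau,R)$ into itself and
\[
\vert\mathcal{T}\omega-\mathcal{T}\overline{\omega}\vert_{\Omega_{\tau}}^{(l+i)}\leqslant\frac{1}{2}\bigl(\vert\omega-\overline{\omega}\vert_{\Omega_{\tau}}^{(l+i)}\bigr)^{\gamma},\qquad\gamma=\min\{l/2,1-l\},
\]
the low-order term of Lemma \ref{lemma6} vanishing here because $\omega$ and $\overline{\omega}$ agree at $t=0$. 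Thus $\mathcal{T}$ has a unique fixed point $\widetilde{w}$ in $Y(\tau,R)$; since it solves the above integral equation and the ODE (\ref{eq2v}) has, with $v$ prescribed, a unique solution, $\widetilde{w}=w$ on $[0,\tau]$, so $w\in C^{l+i,l/2+i/2}(\overline{\Omega}_{\tau})$ with $\vert w\vert_{\Omega_{\tau}}^{(l+i)}\leqslant R$.

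Finally I would iterate over the successive intervals $[k\tau,(k+1)\tau]$, $0\leqslant k\leqslant\lceil t_{0}/\tau\rceil$, restarting each time from $w(\cdot,k\tau)$, whose relevant norm was bounded at the previous step, and noting that one and the same $\tau$ works on all of $[0,t_{0}]$ since every constant involved is monotone in time and bounded by its value at $t_{0}$. At each step the bound for $\vert w\vert^{(l+i)}$ on the new cylinder is multiplied by a fixed constant, so after finitely many steps one reaches $\vert w\vert_{\Omega_{t_{0}}}^{(2-4/p)}\leqslant\Psi_{4}(t_{0})$ with $\Psi_{4}$ finite for every finite $t_{0}<T$; letting $t_{0}\uparrow T$ finishes the proof. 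The chief obstacle is precisely the nonlinear occurrence of $w$ inside $\int_{0}^{t}\phi(w)v\,ds$: it blocks a one-shot use of Lemma \ref{lemma6} and forces the small-time contraction plus the iteration, whose bookkeeping has to be arranged so that the resulting constant does not blow up before time $T$. A secondary, routine point is to record the $i\in\{0,1\}$ analogues of Lemmas \ref{lemma1}--\ref{lemmaR}, declared standard in the paper. (An alternative to the fixed point is a Gronwall-type bootstrap on $N(\tau):=\vert w\vert_{\Omega_{\tau}}^{(2-4/p)}$ starting from $N(\tau)\leqslant A(t_{0})+C\max\{\tau,\tau^{(1-l)/2}\}\,\Theta\bigl(N(\tau)\bigr)\,\Psi_{3}(t_{0})$ with $\Theta$ increasing, which gives the same bound once $N$ is known finite — established on a short interval exactly as above.)
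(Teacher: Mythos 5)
There is a genuine gap at the continuation step, which you yourself flag as ``the chief obstacle'' but do not actually resolve. Your per-step estimate has the shape $N_{k+1}\leqslant N_{k}+C\max\{\tau,\tau^{(1-l)/2}\}\,\Phi(N_{k+1})\,\Psi_{3}(t_{0})$, where $N_{k}$ is the Hölder norm of $w$ at the restart time $k\tau$ and $\Phi$ comes from Lemma \ref{lemma11} combined with the composition lemma. This $\Phi$ is superlinear: already for $i=1$ the seminorm $\langle\phi'(\omega)\nabla\omega\rangle_{x,\Omega_{\tau}}^{(l)}$ produces a term of size $\Vert\phi''\Vert_{\infty}\Vert\nabla\omega\Vert_{\infty}^{2}$, so $\Phi(R)\gtrsim R^{2}$. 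Consequently the admissible step at stage $k$ must shrink like $1/N_{k}$, and your assertion that ``one and the same $\tau$ works on all of $[0,t_{0}]$ since every constant involved is monotone in time and bounded by its value at $t_{0}$'' is circular: the constant governing $\tau$ is $\vert w(\cdot,k\tau)\vert_{\Omega}^{(l+i)}$, precisely the quantity the lemma is meant to bound. With a fixed $\tau$ the recursion is of Riccati type and can blow up before $t_{0}$; with shrinking steps the covered times need not sum to $t_{0}$. So the argument as written yields the Hölder bound only on an interval determined by the data, not up to every finite $t<T$, which is exactly what $\Psi_{4}$ being finite for all finite $t$ requires. (The Gronwall-type ``alternative'' you sketch inherits the same defect, since $\Theta$ is again superlinear.)

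The paper avoids this by exploiting linearity in the top-order quantity instead of feeding the full nonlinear integrand through composition/product lemmas: differentiating (\ref{eq2v}) gives an identity for $\Delta w_{t}$ that is linear in $\Delta w$ with bounded coefficient (of the form $e^{w}(w+\beta)w^{\beta-1}v$), and whose source terms $\Delta v$, $\nabla v\cdot\nabla w$, $\vert\nabla w\vert^{2}$ are already controlled by Lemma \ref{p4}, Lemma \ref{p5} and (\ref{uc}); multiplying by $(\Delta w)^{p-1}$, applying Young's inequality and Gronwall yields an $L^{\infty}(0,t;L^{p}(\Omega))$ bound on $\Delta w$ (and likewise on the other second derivatives), after which Lemma \ref{p5} and the embedding into Hölder spaces give (\ref{uw}). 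To repair your route you would have to import the same structure, e.g.\ first bound $\Vert\nabla w\Vert_{L^{\infty}}$ by Gronwall from $\nabla w_{t}=-\phi'(w)v\,\nabla w-\phi(w)\nabla v$ and only then treat the top-order seminorms linearly --- at which point you are essentially reproducing the paper's proof. Two secondary points: your two-sided bound $c_{1}\leqslant w$ needs $\inf_{\overline{\Omega}}w_{0}>0$, which is stronger than the stated $w_{0}>0$ in $\Omega$ (it is only needed to make $\phi(r)=e^{r}r^{\beta}$ smooth on the range when $\beta$ is not an integer), and the $i\in\{0,1\}$ analogues of Lemmas \ref{lemma6} and \ref{lemmaR} are invoked without proof; both are minor compared with the continuation gap.
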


\begin{proof}
We deduce from the equation (\ref{eq2v})
\[
\Delta w_{t}=-e^{w}\left\{  w^{\beta-1}\left[  w\Delta v+\left(
w+\beta\right)  \left(  2\nabla v\cdot\nabla w+v\Delta w\right)  +v\left(
w+2\beta\right)  \left\vert \nabla w\right\vert ^{2}\right]  -vw^{\beta
-2}\beta(\beta-1)\left\vert \nabla w\right\vert ^{2}\right\}  .
\]
We multiply the last relation by $\left(  \Delta w\right)  ^{p-1}$ with
$p=2^{j}$, $j=1,2,...$. Integrating after that in $\Omega$ and applying
Young's inequality we obtain%
\begin{equation}
\left\Vert \Delta w\right\Vert _{L^{p}(\Omega)}^{p}\leqslant
e^{4(p-1)\varepsilon^{\left(  p/(p-1)\right)  t}}\left[  \left\Vert \Delta
w_{0}\right\Vert _{L^{p}(\Omega)}^{p}+\int\limits_{0}^{t}M(s)ds\right]
\label{delta}%
\end{equation}
where%
\begin{align*}
&  \int\limits_{0}^{t}M(s)ds=\varepsilon^{-p}e^{p\left\Vert w_{0}\right\Vert
_{L^{\infty}(\Omega)}}\left[  \left\Vert w_{0}\right\Vert _{L^{\infty}%
(\Omega)}^{p\beta}\left(  \left\Vert \Delta v\right\Vert _{L^{1}\left(
0,t;L^{p}(\Omega)\right)  }^{p}+2\int\limits_{0}^{t}\left\Vert \nabla
v\cdot\nabla w\right\Vert _{L^{p}(\Omega)}^{p}\right)  \right.  +\\
&  +2\beta\left\Vert w_{0}\right\Vert _{L^{\infty}(\Omega)}^{p\left(
\beta-1\right)  }\left(  \int\limits_{0}^{t}\left\Vert \nabla v\cdot\nabla
w\right\Vert _{L^{p}(\Omega)}^{p}+\left\Vert v\right\Vert _{L^{\infty
}(0,t;L^{\infty}\left(  \Omega\right)  )}^{p}\int\limits_{0}^{t}\left\Vert
\nabla w\right\Vert _{L^{2p}(\Omega)}^{2p}\right)  +\\
&  \left.  +\left\Vert v\right\Vert _{L^{\infty}(0,t;L^{\infty}\left(
\Omega\right)  )}^{p}\left\Vert w_{0}\right\Vert _{L^{\infty}(\Omega
)}^{p(\beta-2)}\left(  \left\Vert w_{0}\right\Vert _{L^{\infty}(\Omega)}%
^{2}+\beta\left\vert \beta-1\right\vert \right)  ^{p}\int\limits_{0}%
^{t}\left\Vert \nabla w\right\Vert _{L^{2p}(\Omega)}^{2p}\right]  .
\end{align*}
Next observe that Lemma \ref{p4}, Lemma \ref{p5} and (\ref{uc}) allow us to
estimate the integral on the right-hand side of (\ref{delta}) and to obtain%
\[
\left\Vert \Delta w\right\Vert _{L^{p}(\Omega)}^{p}\leqslant
e^{4(p-1)\varepsilon^{\left(  p/(p-1)\right)  t}}\left[  \left\Vert \Delta
w_{0}\right\Vert _{L^{p}(\Omega)}^{p}+\Psi_{4}(t)\right]  .
\]
In a similar way, we obtain $D_{x}^{s}w\in L^{p}(\Omega_{T}),$ $\left\vert
s\right\vert =2$. Taking into account Lemma \ref{p5} and the embedding results
in H\"{o}lder space we conclude the proof.
\end{proof}

\begin{lemma}
Let $\Omega\subset\mathbb{R}^{2}$ be a bounded domain. If the hypothesis
$(\mathcal{H})$ is satisfied and $\left(  u_{0},w_{0}\right)  \in\left(
W^{2,p}(\Omega)\right)  ^{2}$, $p>2$, then $u\in C^{2-4/p,1-2/p}\left(
\overline{\Omega}_{T}\right)  $ and
\[
\left\vert u\right\vert _{\Omega_{T}}^{(2-4/q)}\leqslant\Psi_{5}(t)
\]
for all $T$ finite, $0<t<T$.
\end{lemma}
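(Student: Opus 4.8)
The plan is to recover the original unknown $u$ from the transformed one $v$ through the change of variables (\ref{uvw}), i.e.\ $u=v\,e^{w}$, and to combine the H\"older regularity already obtained for $v$ in (\ref{uc}) and for $w$ in (\ref{uw}) by means of the product and composition rules for H\"older spaces. First I would check that the present hypothesis $(u_{0},w_{0})\in(W^{2,p}(\Omega))^{2}$, $p>2$, implies every assumption needed by the preceding results of this Subsection. Since $\Omega\subset\mathbb{R}^{2}$ is bounded with smooth boundary and $p>2$, Morrey's embedding gives $W^{2,p}(\Omega)\hookrightarrow C^{1,1-2/p}(\overline{\Omega})\hookrightarrow W^{1,\infty}(\Omega)$; consequently $w_{0}\in W^{2,p}(\Omega)\cap L^{\infty}(\Omega)$ and $w_{0}\in W^{1,\max\{p,4\}}(\Omega)$, while $v_{0}=u_{0}e^{-w_{0}}$ belongs to the Banach algebra $W^{2,p}(\Omega)\subset W^{2-2/p,p}(\Omega)$ and also to $H^{1}(\Omega)\cap L^{\infty}(\Omega)$, hence to $L^{q}(\Omega)$ for every $q$. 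Together with $(\mathcal{H})$ this places us in the setting of Proposition \ref{Lp}, Proposition \ref{estimat_infinity}, Lemma \ref{p7}, Lemma \ref{p3.}, Lemma \ref{p4}, Lemma \ref{p5}, and of the two lemmas immediately preceding, so that $v\in C^{2-4/p,1-2/p}(\overline{\Omega}_{T})$ with $|v|^{(2-4/p)}_{\Omega_{T}}\leqslant\Psi_{3}(t)$ and $w\in C^{2-4/p,1-2/p}(\overline{\Omega}_{T})$ with $|w|^{(2-4/p)}_{\Omega_{T}}\leqslant\Psi_{4}(t)$.

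Next, since $0<w(x,t)\leqslant w_{0}(x)$ (see (\ref{winf}) and the surrounding discussion), the function $w$ takes its values in the compact set $K=[0,\|w_{0}\|_{L^{\infty}(\Omega)}]$ and $\exp\in C^{\infty}(K)$, so arguing exactly as in Lemma \ref{lemmaR} (a chain-rule estimate in H\"older norms) yields $e^{w}\in C^{2-4/p,1-2/p}(\overline{\Omega}_{T})$ with $|e^{w}|^{(2-4/p)}_{\Omega_{T}}$ bounded by an increasing function of $|w|^{(2-4/p)}_{\Omega_{T}}$, hence by a function of $t$ of the type $\Psi_{i}$. Then, applying the product rule of Lemma \ref{lemma11} (after writing the exponent $2-4/p$ as $l+i$ with $0<l<1$ and $i\in\{0,1\}$ in the obvious way, e.g.\ $i=1$, $l=1-4/p$ when $p>4$), I obtain $u=v\,e^{w}\in C^{2-4/p,1-2/p}(\overline{\Omega}_{T})$ together with $|u|^{(2-4/p)}_{\Omega_{T}}\leqslant C\,|v|^{(2-4/p)}_{\Omega_{T}}\,|e^{w}|^{(2-4/p)}_{\Omega_{T}}\leqslant\Psi_{5}(t)$, where $\Psi_{5}$, being assembled from $\Psi_{3}$ and $\Psi_{4}$ through increasing operations, inherits their properties: $\Psi_{5}$ is increasing, $\Psi_{5}(t)\to0$ as $t\searrow0$, and $\Psi_{5}(t)<\infty$ for every finite $T$. (Alternatively one could feed equation (\ref{eq1}) with boundary condition (\ref{bc}) directly into the linear parabolic $W^{2,1}_{p}$-theory, treating $\nabla w$ and $\Delta w-\delta(1-u)$ as known coefficients, but the route via $u=v\,e^{w}$ is shorter since it merely recycles (\ref{uc}) and (\ref{uw}).)

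The only genuinely delicate point is that the product and composition estimates of Lemmas \ref{lemma11} and \ref{lemmaR} are stated for H\"older exponents of the form $l+i$ with $0<l<1$, so one must first fix a convenient such representation of $2-4/p$, and, in the borderline case where $2-4/p$ happens to be an integer, replace it by a marginally smaller non-integer exponent — this costs nothing, since only continuity of the listed derivatives is required. Everything else is routine bookkeeping of the constants; the substantive work — the time-uniform $L^{p}$ and $L^{\infty}$ bounds, and the bounds for $\nabla v$, $\nabla w$, $\Delta v$ and $\Delta w$ — has already been done, and this lemma is just the transfer of that regularity back to the variable $u$.
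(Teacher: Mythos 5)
Your argument is exactly the paper's: the authors dispose of this lemma in one line, citing (\ref{uc}), (\ref{uw}) and the composition result (Lemma \ref{lemmaR}), i.e.\ writing $u=v\,e^{w}$ and transferring the H\"older bounds on $v$ and $w$ via the composition and product rules, which is precisely what you do. Your additional verification that $(u_{0},w_{0})\in(W^{2,p}(\Omega))^{2}$, $p>2$, yields all the hypotheses of the preceding lemmas (via Morrey/Sobolev embedding) is detail the paper leaves implicit, but it is the same route.
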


\begin{proof}
The conclusion of the Lemma follows from (\ref{uc}), (\ref{uw}) and Lemma
\ref{R}.
\end{proof}

To achieve the proof of Theorem \ref{globalex} we use the next Lemma whose
proof is similar to the proof of Lemmas IV.2 and IV.3 \ in \cite{rascle1} (see
also Lemma 2 in \cite{rascle2}).

\begin{lemma}
\label{Rasclelem} (i) Suppose that $\left\Vert u\right\Vert _{\Omega_{T}%
}^{(m)}\leqslant\Psi(t)$, $m>1$, $m$ not integer, for all $0\leqslant t<T$.
Then we have
\[
\left\vert w^{\beta}\Delta U\right\vert _{\Omega_{t}}^{(\alpha)}+\left\vert
w^{\beta}\nabla U\right\vert _{\Omega_{t}}^{(\alpha)}+\left\vert
w^{-1}\left\vert \nabla w\right\vert ^{2}\right\vert _{\Omega_{t}}^{(\alpha
)}\leqslant\Psi(t)
\]
for all $0\leqslant t<T$, where $\alpha=\min\left\{  l+2,m-1\right\}  $.

(ii) Let $\alpha>0$ not integer. If
\[
\left\vert w^{\beta}\Delta U\right\vert _{\Omega_{t}}^{(\alpha)}+\left\vert
w^{\beta}\nabla U\right\vert _{\Omega_{t}}^{(\alpha)}+\left\vert
w^{-1}\left\vert \nabla w\right\vert ^{2}\right\vert _{\Omega_{t}}^{(\alpha
)}\leqslant\Psi(t)
\]
for all $0\leqslant t<\tau$, then%
\[
\left\vert u\right\vert _{\Omega_{t}}^{(\eta+2)}\leqslant\Psi(t)
\]
where $\eta=\min\left\{  \alpha,l,m\right\}  $.
\end{lemma}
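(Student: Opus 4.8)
The statement is the inductive step of a Hölder bootstrap: the classical solution $(u,w)$ of Section~\ref{local} is, by the estimates of Subsection~\ref{estimat4} and \cite[Theorem~9.1, cap.~IV]{lady}, already known to lie in $C^{2-4/q,1-2/q}$ for all $q>2$ with a bound of the form $\Psi(t)$, and one wants to raise its regularity to $C^{l+2,l/2+1}$ with a bound of the same type. Its point of departure is that equation~(\ref{eq2}) is solved in closed form: with $U=(\beta-1)\int_{0}^{t}u\,ds$ one has $\partial_{t}(w^{1-\beta})=(\beta-1)u$, hence $w^{1-\beta}=w_{0}^{1-\beta}+U$, $w=(w_{0}^{1-\beta}+U)^{1/(1-\beta)}$, $\nabla w=\tfrac{1}{1-\beta}w^{\beta}(\nabla(w_{0}^{1-\beta})+\nabla U)$ and $\Delta w=\tfrac{1}{1-\beta}w^{\beta}(\Delta(w_{0}^{1-\beta})+\Delta U)+\beta w^{-1}|\nabla w|^{2}$. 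Expanding $\nabla\cdot(u\nabla w)=\nabla u\cdot\nabla w+u\Delta w$, equation~(\ref{eq1}) becomes the linear problem~(\ref{p1})--(\ref{p3}) with $g=w$ given by~(\ref{coef_g}), whose coefficients are assembled precisely from the three quantities $w^{\beta}\nabla U$, $w^{\beta}\Delta U$, $w^{-1}|\nabla w|^{2}$ and the fixed ($t$-independent, $C^{l+1}$ resp.\ $C^{l}$) functions $w^{\beta}\nabla(w_{0}^{1-\beta})$, $w^{\beta}\Delta(w_{0}^{1-\beta})$. Finally, by~(\ref{wexpr}) and Proposition~\ref{estimat_infinity}, $0<w_{0}(x)e^{-Ct}\le w(x,s)\le w_{0}(x)$ on each $[0,t]$, $t<T$, so $w$ and $w_{0}^{1-\beta}+U$ stay in a compact $K_{t}\subset(0,\infty)$ on which $s\mapsto s^{1/(1-\beta)}$, $s^{\pm\beta}$, $s^{-1}$, $e^{\pm s}$ are $C^{3}$; Lemma~\ref{lemmaR} and the product estimate Lemma~\ref{lemma11} then apply to all the relevant compositions and products, with constants increasing in $t$.

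\textbf{(i).} From $\|u\|_{\Omega_{T}}^{(m)}\le\Psi(t)$, Lemma~\ref{lemma6} (integration in $t$, with the factor $\max\{t,t^{(1-l)/2}\}$) gives $|U|_{\Omega_{t}}^{(m)}\le\Psi(t)$, hence $|w_{0}^{1-\beta}+U|_{\Omega_{t}}^{(\min\{l+2,m\})}\le\Psi(t)$ and, by Lemma~\ref{lemmaR} for $s\mapsto s^{1/(1-\beta)}$, the same bound for $w$, $w^{\beta}$, $w^{-1}$; Lemma~\ref{lemma1} controls $|\nabla U|_{\Omega_{t}}^{(m-1)}$, $|\Delta U|_{\Omega_{t}}^{(m-2)}$ and then $|\nabla w|_{\Omega_{t}}^{(\min\{l+1,m-1\})}$. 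The three estimates follow from Lemma~\ref{lemma11}: $w^{\beta}\nabla U\in C^{\min\{l+2,m-1\}}=C^{\alpha}$ immediately, and for the other two one rewrites $w^{\beta}\Delta U=(1-\beta)(\Delta w-\beta w^{-1}|\nabla w|^{2})-w^{\beta}\Delta(w_{0}^{1-\beta})$ and, using~(\ref{eq1}), $\Delta U=(\beta-1)[\,u-u_{0}+\nabla\!\cdot\!\int_{0}^{t}u\nabla w\,ds-\delta\int_{0}^{t}u(1-u)\,ds\,]$ in order to trade spatial derivatives of $U$ for better-behaved terms; the exact exponent accounting yielding $\alpha=\min\{l+2,m-1\}$ is that of \cite[Lemmas~IV.2--IV.3]{rascle1}.

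\textbf{(ii).} Conversely, regard $u$ as the solution of~(\ref{p1})--(\ref{p3}) and apply Theorem~\ref{lady}. The decisive point is that in the reduction $v=ue^{-g}$ of that theorem, with $g=w$, the term $\Delta g$ cancels out of the effective zeroth-order coefficient, which reduces to $b=g_{t}-\delta(1+g^{-\beta}g_{t})=-w^{\beta}u+\delta u-\delta$ (this is already visible in the constant $\mu$ of~(\ref{th})); thus only $b_{i}=-\partial_{i}w$ and $b$ — both controlled, via the hypothesis of~(ii), Lemmas~\ref{lemma11},~\ref{lemmaR} and the compact range of $w$, in $C^{\eta,\eta/2}(\overline{\Omega}_{t})$ with $\eta=\min\{\alpha,l,m\}$ — enter the Schauder estimate, and likewise the source term ($u^{2}$ being in $C^{m}$) is of that regularity. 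The compatibility condition $\partial_{\eta}u_{0}=u_{0}\,\partial_{\eta}w_{0}$ assumed in Theorem~\ref{globalex} being the required one, Theorem~\ref{lady} (with $\eta$ in place of $l$) gives $u\in C^{\eta+2,\eta/2+1}(\overline{\Omega}_{t})$ and $|u|_{\Omega_{t}}^{(\eta+2)}\le\Psi(t)$, the $\Psi$-type dependence on $t$ being inherited from Lemma~\ref{lemma6} and the monotone dependence of $\Theta$, $\Psi$ on $T$, $\mu(T)$ in Theorems~\ref{rascle} and~\ref{lady}.

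\textbf{Main obstacle.} The scheme is routine; the real work is the exponent bookkeeping of~(i) — carrying Hölder exponents through the time-integration of Lemma~\ref{lemma6} and the compositions of Lemma~\ref{lemmaR} (which lose a power $\gamma=\min\{l/2,1-l\}$) — together with the need to control the mildly singular factors $w^{\pm\beta}$, $w^{-1}$, $(\cdot)^{1/(1-\beta)}$, which rests entirely on the a priori lower bound $w\ge w_{0}e^{-Ct}>0$ on finite intervals. Once the Lemma is proved, alternating~(i) and~(ii) finitely many times raises $u$ to $C^{l+2,l/2+1}$ with $|u(\cdot,t)|_{\Omega}^{(l+2)}\le\Psi(t)<\infty$ for all $t<T$; since by Theorem~\ref{local existence} the existence interval has length bounded below in terms of $|u_{0}|_{\Omega}^{(l+2)}$, this forces $T=\infty$, i.e.\ Theorem~\ref{globalex}.
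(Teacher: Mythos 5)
Your proposal follows essentially the same route as the paper: the paper gives no independent proof of this lemma, saying only that it is proved as Lemmas IV.2 and IV.3 of \cite{rascle1} (see also Lemma 2 of \cite{rascle2}), and your sketch reconstructs precisely that argument --- the identities $w^{1-\beta}=w_{0}^{1-\beta}+U$, $\nabla w=\tfrac{1}{1-\beta}w^{\beta}\left(\nabla(w_{0}^{1-\beta})+\nabla U\right)$, the H\"{o}lder calculus of Lemmas \ref{lemma6}, \ref{lemma11}, \ref{lemmaR} together with the positive lower bound on $w$ on finite time intervals for part (i), and the Schauder estimate of Theorem \ref{lady} applied to (\ref{p1})--(\ref{p3}) with $g=w$ (where indeed $b_{i}=-\partial_{i}w$ and $b=-w^{\beta}u+\delta u-\delta$) for part (ii). Since you, like the paper, defer the fine exponent bookkeeping to Rascle's lemmas, your treatment is at least as detailed as the paper's and takes the same approach.
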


In such a way the regularity of the solution $u$ is successively ameliorated
until reaching the desired bound of $\left\vert u\right\vert _{\Omega_{t}%
}^{(l+2)}$.

\section{Asymptotic behavior of global solutions}

\label{asymptotic}

\setcounter{equation}{0} \setcounter{figure}{0}

\subsection{Steady states}

In this Section we are going to study the asymptotic behavior of the smooth
solution of the problem (\ref{eq1})-(\ref{ic2}). We shall begin by analyzing
the steady states of the system (\ref{eq1})- (\ref{eq2}) with homogeneous
Neumann boundary condition (\ref{bc}). So, we consider the following
stationary problem:%
\begin{align}
&  0=\Delta u-\nabla\cdot(u\nabla w)+\delta u(1-u) & x  &  \in\Omega
\label{stationary1}\\
&  0=w^{\beta}u & x  &  \in\Omega\label{stationary2}\\
&  \frac{\partial u}{\partial\eta}-u\frac{\partial w}{\partial\eta}=0 & x  &
\in\partial\Omega. \label{stationary3}%
\end{align}

\begin{lemma}
\label{intFG}Let $\Omega\subset\mathbb{R}^{N}$, $N\geqslant1$ be a domain. Let
$u,w\in C^{1}(\Omega)$ be two functions satisfying $uw=0$, for all $x\in
\Omega$. Then we have $\nabla u\cdot\nabla w=0$ for all $x\in\Omega$.
\end{lemma}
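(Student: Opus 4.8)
The plan is to verify $\nabla u(x_0)\cdot\nabla w(x_0)=0$ at each point $x_0\in\Omega$ separately, distinguishing cases according to whether $u$ and $w$ vanish at $x_0$. First, if $u(x_0)\neq0$, then by continuity $u$ does not vanish on some open ball $B\subset\Omega$ centered at $x_0$; since $uw\equiv0$, this forces $w\equiv0$ on $B$, hence $\nabla w\equiv0$ on $B$ and in particular $\nabla u(x_0)\cdot\nabla w(x_0)=0$. By symmetry, the same conclusion holds if $w(x_0)\neq0$ (now $\nabla u(x_0)=0$). So the only case that needs work is $u(x_0)=w(x_0)=0$.

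In that case, if $\nabla u(x_0)=0$ there is nothing to prove, so assume $\nabla u(x_0)\neq0$; I claim this already forces $\nabla w(x_0)=0$. Since $u\in C^1(\Omega)$, some partial derivative, say $\partial u/\partial x_1$, is nonzero at $x_0$, hence of constant sign on a small ball $B\subset\Omega$ about $x_0$. Consequently, on each segment of $B$ parallel to the $x_1$-axis the map $s\mapsto u(s,c_2,\ldots,c_N)$ is strictly monotone, so it has at most one zero; equivalently (via the implicit function theorem) $\{u=0\}\cap B$ is a $C^1$ hypersurface. In either description, $B\setminus\{u=0\}$ is dense in $B$. On this dense set $u\neq0$, so $w=0$ there, and since $w$ is continuous we conclude $w\equiv0$ on $B$; therefore $\nabla w\equiv0$ on $B$, so $\nabla w(x_0)=0$ and again $\nabla u(x_0)\cdot\nabla w(x_0)=0$.

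Putting the cases together gives $\nabla u\cdot\nabla w\equiv0$ on $\Omega$, as asserted. The only delicate point is the case $u(x_0)=w(x_0)=0$; the essential ingredient there is the elementary fact that the zero set of a $C^1$ function with non-vanishing gradient has dense complement in a neighborhood, which combined with the continuity of $w$ pins down $w$ to be identically zero near $x_0$ and hence forces its gradient to vanish.
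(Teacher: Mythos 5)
Your proof is correct, but it follows a genuinely different route from the paper's. The paper argues globally and purely topologically: setting $F=u^{-1}(0)$, $G=w^{-1}(0)$, it uses that $F\cup G=\Omega$ with $F,G$ closed implies $\overline{\inter F}\cup\overline{\inter G}=\Omega$, and then observes that $\inter F\subset(\nabla u)^{-1}(0)$ and $\inter G\subset(\nabla w)^{-1}(0)$, which are closed sets since $u,w\in C^{1}$; hence at every point of $\Omega$ at least one of the two gradients vanishes. You instead argue pointwise, and in the only delicate case $u(x_{0})=w(x_{0})=0$ with $\nabla u(x_{0})\neq0$ you use the analytic structure of the zero set (monotonicity along lines, equivalently the implicit function theorem) to see that $\{u=0\}$ has dense complement near $x_{0}$, which forces $w\equiv0$ on a whole neighborhood and hence $\nabla w(x_{0})=0$. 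Both arguments in fact establish the stronger conclusion that at each point one of $\nabla u$, $\nabla w$ is zero. The paper's covering argument is more uniform and needs no structure of the zero set beyond the elementary facts that a $C^{1}$ function vanishing on an open set has vanishing gradient there and that $(\nabla u)^{-1}(0)$, $(\nabla w)^{-1}(0)$ are closed; your argument is more hands-on and yields the slightly sharper local information that any common zero with $\nabla u\neq0$ lies in the interior of $\{w=0\}$, at the price of invoking the $C^{1}$ level-set structure, which the topological proof avoids.
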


\begin{proof}
We consider the closed sets%
\[
F=u^{-1}(0),\quad G=w^{-1}(0).
\]
The fact that $uw=0$ implies that $F\cup G=\Omega.$ As $F$ and $G$ are closed
it is straightforward to show that
\begin{equation}
\overline{\inter F}\cup\overline{\inter G}=\Omega. \label{int}%
\end{equation}
As the functions $u$ and $w$ belong to $C^{1}(\Omega)$, the sets $\left(
\nabla u\right)  ^{-1}\left(  0\right)  $, $\left(  \nabla w\right)
^{-1}\left(  0\right)  $ are closed. Moreover, $\inter F\subset\left(  \nabla
u\right)  ^{-1}\left(  0\right)  $, $\inter G\subset\left(  \nabla w\right)
^{-1}\left(  0\right)  $, which imply, taking into account (\ref{int})%
\[
\Omega\subset\left(  \nabla u\right)  ^{-1}\left(  0\right)  \cup\left(
\nabla w\right)  ^{-1}\left(  0\right)
\]
and the proof is complete.
\end{proof}

\begin{proposition}
\label{th_steady_states}Let $\Omega\subset\mathbb{R}^{N}$, $N\geqslant1$ be an
open set. If $(u,w)\in\left(  C^{2}(\Omega)\cap C^{1}(\overline{\Omega
})\right)  ^{2}$ is a classical solution to (\ref{stationary1}%
)-(\ref{stationary3}) then%
\[
(u,w)=(0,\widetilde{w})\,\quad\text{or\quad}(u,w)=(k,0)\,
\]
where $\widetilde{w}\in C^{2}(\Omega)\cap C^{1}(\overline{\Omega})$ and $k$ is
a constant if $\delta=0$ and $k=1$ if $\delta>0$.
\end{proposition}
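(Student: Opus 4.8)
The plan is to use the algebraic equation (\ref{stationary2}) to decouple the system, turning (\ref{stationary1})--(\ref{stationary3}) into a scalar semilinear Neumann problem for $u$ alone. Since $\beta\geqslant1$, the identity $w^{\beta}u=0$ is equivalent to $uw=0$ on $\Omega$; in particular the open set $\{u\neq0\}$ is contained in the zero set of $w$, so on $\{u\neq0\}$ the function $w$ — hence $\nabla w$ and every second derivative of $w$ — vanishes identically, giving $u\Delta w=0$ there, while $u\Delta w=0$ trivially on $\{u=0\}$. Combining this with Lemma \ref{intFG}, which yields $\nabla u\cdot\nabla w\equiv0$, I obtain
\[
\nabla\cdot(u\nabla w)=\nabla u\cdot\nabla w+u\Delta w\equiv0\qquad\text{on }\Omega .
\]
For the boundary term, if $x_{0}\in\partial\Omega$ satisfies $u(x_{0})\neq0$ then, by continuity of $u$ on $\overline{\Omega}$, $u$ does not vanish on a relative neighbourhood of $x_{0}$, so $w\equiv0$ on a nonempty open subset $V$ of $\Omega$ with $x_{0}\in\overline{V}$; since $\nabla w\in C^{0}(\overline{\Omega})$ this forces $\nabla w(x_{0})=0$ and hence $\partial w/\partial\eta(x_{0})=0$. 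Thus $u\,\partial w/\partial\eta\equiv0$ on $\partial\Omega$, and the system reduces to
\[
\Delta u+\delta u(1-u)=0\ \text{ in }\Omega,\qquad \frac{\partial u}{\partial\eta}=0\ \text{ on }\partial\Omega .
\]

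Next I would solve this scalar problem by a one-line energy identity. If $\delta=0$, multiplying $\Delta u=0$ by $u$ and integrating by parts (the boundary term vanishing by the Neumann condition) gives $\int_{\Omega}|\nabla u|^{2}=0$, so $u\equiv k$ for a constant $k$. If $\delta>0$, I would multiply by $u-1$ and integrate by parts to get
\[
-\int_{\Omega}|\nabla u|^{2}\,dx=\delta\int_{\Omega}u(u-1)^{2}\,dx .
\]
Since $u\geqslant0$, the left side is $\leqslant0$ and the right side is $\geqslant0$, so both integrals vanish; hence $\nabla u\equiv0$, i.e. $u$ is a constant $c$, and $u(u-1)^{2}\equiv0$ forces $c\in\{0,1\}$.

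Finally I would read off the dichotomy. If $u\equiv0$, then (\ref{stationary1})--(\ref{stationary3}) hold for any $w=\widetilde{w}\in C^{2}(\Omega)\cap C^{1}(\overline{\Omega})$, which is the first alternative. If $u$ is a nonzero constant $k$, then $uw\equiv0$ together with $u$ being nowhere zero forces $w\equiv0$; substituting back into (\ref{stationary1}) gives $\delta k(1-k)=0$, so $k$ is arbitrary when $\delta=0$ and $k=1$ when $\delta>0$, which is the second alternative. I expect the only delicate point to be the first paragraph: one must deduce, from the mere continuity of $u$ and $w$ up to the boundary, that the chemotactic term $\nabla\cdot(u\nabla w)$ and the boundary flux $u\,\partial w/\partial\eta$ vanish \emph{identically} even though $\nabla w$ and $\Delta w$ need not vanish on all of $\Omega$; the sign condition $u\geqslant0$ is also essential in the energy step when $\delta>0$.
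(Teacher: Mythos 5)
Your argument is correct, and its core is the same as the paper's: the decisive ingredients are Lemma \ref{intFG} (giving $\nabla u\cdot\nabla w\equiv 0$ from $uw=0$) and the energy identities obtained by multiplying (\ref{stationary1}) by $u$ (when $\delta=0$) or by $u-1$ (when $\delta>0$). Where you differ is organizational: you first decouple the system pointwise, showing $\nabla\cdot(u\nabla w)\equiv 0$ in $\Omega$ (via the open-set argument that $w$, hence $\nabla w$ and $\Delta w$, vanish on $\{u\neq 0\}$) and $u\,\partial w/\partial\eta\equiv 0$ on $\partial\Omega$, so as to reduce to the scalar Neumann problem $\Delta u+\delta u(1-u)=0$, $\partial u/\partial\eta=0$, and only then integrate. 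The paper skips this reduction: it multiplies the full equation by $u$ (resp. $u-1$) and integrates by parts, whereupon the boundary contributions combine into $\int_{\partial\Omega}(u-\sigma)\bigl(\partial u/\partial\eta-u\,\partial w/\partial\eta\bigr)$ and vanish directly by the no-flux condition (\ref{stationary3}), and the surviving interior term $\int_{\Omega}u\,\nabla w\cdot\nabla u$ is killed by Lemma \ref{intFG}; thus your treatment of $u\Delta w$ and of the boundary flux, while sound, is not needed. What your route buys is a cleaner intermediate statement (the stationary problem for $u$ alone) and an explicit record of where every chemotactic contribution dies; what the paper's route buys is brevity. One shared caveat: the step $\int_{\Omega}|\nabla u|^{2}=-\delta\int_{\Omega}u(u-1)^{2}\leqslant 0$ for $\delta>0$ uses $u\geqslant 0$, which is not written into (\ref{stationary1})--(\ref{stationary3}); the paper uses it tacitly (its displayed strict inequality should read $\leqslant 0$), and you at least flag it explicitly, as both arguments also tacitly use that $\Omega$ is bounded (and regular enough) for the integrations by parts and connected to conclude that $u$ is a single constant.
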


\begin{proof}
If $\delta=0$, we multiply (\ref{stationary1}) by $u$ and integrate over
$\Omega$. We obtain%
\[
0=-\int\limits_{\Omega}\left\vert \nabla u\right\vert ^{2}+\int\limits_{\Omega
}u\nabla w\cdot\nabla u.
\]
The last equality and Lemma \ref{intFG} imply that $u$ is a constant. Taking
also into account (\ref{stationary2})-(\ref{stationary3}), the conclusion of
the theorem follows.

We now turn to the case $\delta>0$. Multiplying (\ref{stationary1}) by $u-1$
and integrating over $\Omega$ we obtain%
\[
0=-\int\limits_{\Omega}\left\vert \nabla u\right\vert ^{2}+\int\limits_{\Omega
}u\nabla w\cdot\nabla u-\int\limits_{\Omega}\delta u(u-1)^{2}.
\]
From Lemma \ref{intFG} we have%
\[
\int\limits_{\Omega}\left\vert \nabla u\right\vert ^{2}=-\int\limits_{\Omega
}\delta u(u-1)^{2}<0.
\]
We conclude the proof using the same arguments as above.
\end{proof}

\ 

In the remaining of this paper we shall place ourselves in the hypotheses of
Theorem \ref{globalex}. Then the system (\ref{eq1})-(\ref{ic2}) has a global
in time classical H\"{o}lder continuous solution. We emphasize that the
hypothesis $(\mathcal{H})$ is also fulfilled.

\ \ 

\begin{lemma}
\label{pos}If there exists a positive constant $\gamma>0$ such that
$u_{0}(x)\geqslant\gamma$ for all $x\in\Omega$, then every global solution $u$
of (\ref{eq1})-(\ref{ic2}) satisfies $u(x,t)\geqslant\min\left\{
1,\gamma\right\}  e^{-\left\Vert w_{0}\right\Vert _{L^{\infty}(\Omega)}}$ for
all $x\in\Omega$, $t>0$.
\end{lemma}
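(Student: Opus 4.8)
The idea is to use the change of variables $v(x,t) = u(x,t)e^{-w(x,t)}$ from Subsection \ref{estimat2} and obtain a lower bound for $v$, which will then translate into a lower bound for $u$ since $u = ve^{w} \geqslant v$. First I would recall that $v$ solves the system (\ref{eq1v})-(\ref{ic2v}), in particular
\[
\frac{\partial v}{\partial t} = \Delta v + \nabla v \cdot \nabla w + e^{w} v^{2} w^{\beta} + \delta v(1 - v e^{w}) \qquad \text{in } \Omega_{T},
\]
with homogeneous Neumann boundary condition $\partial v/\partial\eta = 0$ and initial datum $v_{0} = u_{0} e^{-w_{0}}$. From the hypothesis $u_{0} \geqslant \gamma$ and the bound $0 < w_{0} \leqslant \|w_{0}\|_{L^{\infty}(\Omega)}$ we get $v_{0}(x) \geqslant \gamma e^{-\|w_{0}\|_{L^{\infty}(\Omega)}}$ for all $x \in \Omega$.

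Next I would set up a comparison argument. Since $e^{w} v^{2} w^{\beta} \geqslant 0$ and $v \geqslant 0$ (recall $u, w \geqslant 0$, so $v \geqslant 0$), the term $e^{w} v^{2} w^{\beta}$ may be dropped, giving the differential inequality
\[
\frac{\partial v}{\partial t} - \Delta v - \nabla v \cdot \nabla w \geqslant \delta v(1 - v e^{w}) = \delta v - \delta e^{w} v^{2}.
\]
I would then look for a spatially constant subsolution of the form $\underline{v}(t)$, i.e. a function solving the ODE $\underline{v}'(t) = \delta \underline{v}(t) - \delta e^{\|w_{0}\|_{L^{\infty}(\Omega)}} \underline{v}(t)^{2}$ with $\underline{v}(0) = \gamma e^{-\|w_{0}\|_{L^{\infty}(\Omega)}}$; here one must check that $\delta e^{w} v^{2} \leqslant \delta e^{\|w_{0}\|_{L^{\infty}(\Omega)}} v^{2}$ so that $\underline v$ is genuinely a subsolution (this uses $w \leqslant \|w_0\|_{L^\infty(\Omega)}$ and that we compare $\underline v$ against $v$ at points where $v \le \underline v$, so the quadratic term has the right sign). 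For $\delta > 0$ this is a logistic ODE whose solution increases monotonically toward the equilibrium $e^{-\|w_{0}\|_{L^{\infty}(\Omega)}}$ if $\gamma < 1$ and stays constant if $\gamma e^{-\|w_{0}\|_{L^\infty(\Omega)}} = e^{-\|w_0\|_{L^\infty(\Omega)}}$; in any case $\underline{v}(t) \geqslant \min\{1, \gamma\} e^{-\|w_{0}\|_{L^{\infty}(\Omega)}}$ for all $t \geqslant 0$. For $\delta = 0$ the inequality reads $\partial_{t} v - \Delta v - \nabla v \cdot \nabla w \geqslant 0$, and the constant $\underline{v} \equiv \gamma e^{-\|w_{0}\|_{L^{\infty}(\Omega)}}$ works directly.

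Finally I would invoke the parabolic comparison principle (maximum principle) for the linear operator $L z = z_{t} - \Delta z - \nabla w \cdot \nabla z + c(x,t) z$ with the zero-flux boundary condition, where the coefficient $c$ is chosen to absorb the difference of the logistic terms along the standard linearization; the coefficients $\nabla w$ and $c$ are bounded and Hölder continuous by the regularity established in Section \ref{global_existence}, so the comparison principle applies on $\overline{\Omega} \times [0, T]$ for every finite $T$. This yields $v(x,t) \geqslant \underline{v}(t) \geqslant \min\{1, \gamma\} e^{-\|w_{0}\|_{L^{\infty}(\Omega)}}$, and hence
\[
u(x,t) = v(x,t) e^{w(x,t)} \geqslant v(x,t) \geqslant \min\{1, \gamma\} e^{-\|w_{0}\|_{L^{\infty}(\Omega)}}
\]
for all $x \in \Omega$ and $t > 0$, which is the claim. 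The main obstacle I anticipate is the careful justification that the logistic (or, for $\delta = 0$, linear) comparison is legitimate despite the quadratic nonlinearity: one has to linearize $v(1 - v e^{w})$ around the subsolution and verify the sign of the remainder, or equivalently argue directly at a first touching point that the nonlinear term cannot push $v$ below $\underline{v}$; the boundary condition $\partial v/\partial\eta = 0$ must also be handled, but this is standard for Neumann problems since a first interior-in-time minimum on $\partial\Omega$ is excluded by Hopf's lemma.
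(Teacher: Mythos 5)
Your argument is correct, but it follows a genuinely different route from the paper. The paper proves the bound by an energy (Stampacchia truncation) argument: it multiplies equation (\ref{eq1v}) by $e^{w}(v-\alpha)_{-}$ with a constant $\alpha\leqslant e^{-\left\Vert w_{0}\right\Vert _{L^{\infty}(\Omega)}}$ and integrates over $\Omega$; on the set $\{v<\alpha\}$ one has $ve^{w}\leqslant\alpha e^{\left\Vert w_{0}\right\Vert _{L^{\infty}(\Omega)}}\leqslant1$, so every term on the right-hand side has a sign, whence $\int_{\Omega}e^{w}(v-\alpha)_{-}^{2}$ is nonincreasing and, choosing $\alpha=\gamma e^{-\left\Vert w_{0}\right\Vert _{L^{\infty}(\Omega)}}$ if $\gamma<1$ and $\alpha=e^{-\left\Vert w_{0}\right\Vert _{L^{\infty}(\Omega)}}$ if $\gamma\geqslant1$, it vanishes initially and hence for all $t$. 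This needs no maximum principle, no Hopf lemma and no regularity of the coefficients beyond what is required to integrate by parts. Your approach instead uses the pointwise parabolic comparison principle with a spatially constant subsolution: it is legitimate here because $(u,w)$ is a classical solution, so $\nabla w$ and the linearized zeroth-order coefficient $c(x,t)=\delta-\delta e^{w}(v+\underline{v})$ are bounded on $\overline{\Omega}\times[0,T]$ for each finite $T$, and the homogeneous Neumann condition for $v$ allows the usual Hopf-lemma exclusion of boundary minima; the subsolution verification $e^{\left\Vert w_{0}\right\Vert _{L^{\infty}(\Omega)}}\underline{v}^{2}\geqslant e^{w}\underline{v}^{2}$ goes in the right direction, so the scheme closes. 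Two small remarks: you do not actually need the time-dependent logistic ODE solution, since the constant $\underline{v}\equiv\min\{1,\gamma\}e^{-\left\Vert w_{0}\right\Vert _{L^{\infty}(\Omega)}}$ is already a stationary subsolution (at that level $1-\underline{v}e^{w}\geqslant0$), which removes the need to discuss monotonicity of the ODE; and in your discussion of the ODE you omit the case $\gamma>1$, where $\underline{v}$ decreases toward the equilibrium $e^{-\left\Vert w_{0}\right\Vert _{L^{\infty}(\Omega)}}$ but stays above it, so the claimed lower bound still holds. What your route buys is a transparent identification of the logistic structure; what the paper's route buys is economy of hypotheses and a proof entirely by integration, in the same spirit as its $L^{p}$ and $L^{\infty}$ estimates.
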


\begin{proof}
Let $\alpha$ be a positive constant to be chosen later. By multiplying the
equation (\ref{eq1v}) by $e^{w}(v-\alpha)_{-}=e^{w}\max\left\{  \alpha
-v,0\right\}  $ and integrating over $\Omega$ we get%

\begin{align}
\frac{1}{2}\frac{d}{dt}\int\limits_{\Omega}e^{w}(v-\alpha)_{-}^{2}  &
=-\int\limits_{\Omega}e^{w}\left\vert \nabla(v-\alpha)_{-}\right\vert
^{2}-\int\limits_{\Omega}e^{2w}v^{2}w^{\beta}(v-\alpha)_{-}-\nonumber\\
&  -\frac{1}{2}\int\limits_{\Omega}e^{2w}vw^{\beta}(v-\alpha)_{-}^{2}%
-\delta\int\limits_{\Omega}e^{w}v(1-ve^{w})(v-\alpha)_{-}. \label{bound}%
\end{align}
If $\delta=0$, let us observe that the right-hand side of (\ref{bound}) is
nonpositive. If $\delta>0$, we choose $\alpha$ small enough such that
$0<\alpha\leqslant e^{-\left\Vert w_{0}\right\Vert _{L^{\infty}(\Omega)}}$.
Then the last term in (\ref{bound}) is also nonpositive.

From the above considerations, we get%
\begin{equation}
\int\limits_{\Omega}e^{w}(v-\alpha)_{-}^{2}\leqslant\int\limits_{\Omega
}e^{w_{0}}(v_{0}-\alpha)_{-}^{2}. \label{ine}%
\end{equation}
We consider first the case when $\gamma<1$. We choose $\alpha=\gamma
e^{-\left\Vert w_{0}\right\Vert _{L^{\infty}(\Omega)}}<e^{-\left\Vert
w_{0}\right\Vert _{L^{\infty}(\Omega)}}$. Because $u_{0}\geqslant\gamma>0$, we
obtain $v_{0}=u_{0}e^{-w_{0}}\geqslant\gamma e^{-\left\Vert w_{0}\right\Vert
_{L^{\infty}(\Omega)}}=\alpha$. From (\ref{ine}) we obtain that $u\geqslant
\gamma e^{-\left\Vert w_{0}\right\Vert _{L^{\infty}(\Omega)}}$.

Now let $\gamma\geqslant1$. We choose $\alpha=e^{-\left\Vert w_{0}\right\Vert
_{L^{\infty}(\Omega)}}$. Because $u_{0}\geqslant\gamma>1$, we obtain
$v_{0}=u_{0}e^{-w_{0}}\geqslant e^{-\left\Vert w_{0}\right\Vert _{L^{\infty
}(\Omega)}}=\alpha$. From (\ref{ine}) we obtain that $u\geqslant
e^{-\left\Vert w_{0}\right\Vert _{L^{\infty}(\Omega)}}$.
\end{proof}

\subsection{Exponential convergence}

In this subsection we consider $\beta=1.$

\begin{lemma}
\label{estim copy(1)}If there exists a positive constant $\gamma>0$ such that
$u_{0}(x)\geqslant\gamma$ for all $x\in\Omega$, then%
\begin{equation}
\int\limits_{\Omega}\left\vert \nabla w\right\vert ^{2}\leqslant\left(
C_{27}+C_{28}t\right)  e^{-2\lambda t} \label{w1}%
\end{equation}
where $C_{27}$, $C_{28}$ are positive constants independent on $t$ and
$\lambda=\min\left\{  1,\gamma\right\}  e^{-\left\Vert w_{0}\right\Vert
_{L^{\infty}(\Omega)}}>0$.
\end{lemma}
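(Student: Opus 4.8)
The strategy is to derive a differential inequality for the quantity $y(t):=\int_\Omega|\nabla w|^2\,dx$ and then integrate it. First I would differentiate $y(t)$ in time and use equation (\ref{eq2v}) (with $\beta=1$), namely $w_t=-e^w w v$, together with the gradient identity already computed in the proof of Lemma \ref{p5},
\[
\nabla w_t=-e^w w\,v\,\nabla w-e^w v\,\nabla w-e^w w\,\nabla v,
\]
to obtain
\[
\frac12\frac{d}{dt}\int_\Omega|\nabla w|^2
=-\int_\Omega e^w v\,(w+1)\,|\nabla w|^2-\int_\Omega e^w w\,\nabla v\cdot\nabla w.
\]
Now the key point is the lower bound on $v$: from Lemma \ref{pos} (equivalently, since $v=ue^{-w}$ and $0<w\le w_0$) we have $v(x,t)\ge\min\{1,\gamma\}e^{-2\|w_0\|_{L^\infty(\Omega)}}$, and more cheaply $u(x,t)\ge\min\{1,\gamma\}e^{-\|w_0\|_{L^\infty(\Omega)}}=\lambda$. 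Hence $e^w v(w+1)=u(w+1)\ge u\ge\lambda$ pointwise, so the first term is bounded above by $-\lambda\int_\Omega|\nabla w|^2=-\lambda y(t)$.

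\textbf{Controlling the cross term.} The remaining term $-\int_\Omega e^w w\,\nabla v\cdot\nabla w$ I would bound by Cauchy--Schwarz and Young's inequality:
\[
\left|\int_\Omega e^w w\,\nabla v\cdot\nabla w\right|
\le e^{\|w_0\|_{L^\infty(\Omega)}}\|w_0\|_{L^\infty(\Omega)}\,\|\nabla v\|_{L^2(\Omega)}\,\|\nabla w\|_{L^2(\Omega)}
\le \lambda\,y(t)+C\,\|\nabla v\|_{L^2(\Omega)}^2,
\]
for a suitable constant $C$ depending only on $\|w_0\|_{L^\infty(\Omega)}$ and $\lambda$. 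This absorbs the good term only partially, so I would instead split the first (good) term: keep $\tfrac12\lambda y$ to control the cross term via Young with the right weight, and keep the other $\tfrac12\lambda y$ (actually, since $e^wv(w+1)\ge \lambda$ but I want a clean decay rate $2\lambda$, I would be slightly more careful — note $e^wv=u$, and $u(w+1)\ge u$, but to match $\lambda=\min\{1,\gamma\}e^{-\|w_0\|}$ I use $u\ge\lambda$ directly). Collecting terms yields
\[
\frac{d}{dt}\,y(t)\le -2\lambda\,y(t)+C_{28}\,\|\nabla v\|_{L^2(\Omega)}^2,
\]
after relabelling constants. The crucial input is that $\|\nabla v\|_{L^2(\Omega)}$ is \emph{bounded uniformly in time} by Lemma \ref{p7} (inequality (\ref{grv})); thus $C_{28}\|\nabla v\|_{L^2(\Omega)}^2\le C_{28}C_{20}^2=:C_{28}'$ is a constant.

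\textbf{Integration and conclusion.} With $y'(t)\le -2\lambda y(t)+C'$ (constant right-hand side), Gronwall's lemma gives $y(t)\le y(0)e^{-2\lambda t}+\frac{C'}{2\lambda}(1-e^{-2\lambda t})$, which is bounded but does \emph{not} decay — so a bare uniform bound on $\|\nabla v\|_{L^2}$ is not enough, and the stated estimate with the $e^{-2\lambda t}$ factor and the linear-in-$t$ prefactor $(C_{27}+C_{28}t)$ signals that one must feed in decay of $\|\nabla v\|_{L^2(\Omega)}$ too, or handle the term differently. The cleanest route: use the integrating factor $e^{2\lambda t}$, so that $\frac{d}{dt}\big(e^{2\lambda t}y(t)\big)\le C_{28}\,e^{2\lambda t}\|\nabla v\|_{L^2(\Omega)}^2$. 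If at this stage one only knows $\|\nabla v\|_{L^2(\Omega)}^2$ is uniformly bounded, integrating from $0$ to $t$ gives $e^{2\lambda t}y(t)\le y(0)+\frac{C_{28}C_{20}^2}{2\lambda}(e^{2\lambda t}-1)$, again no decay. Therefore the honest argument must instead exploit that $\|\nabla v\|_{L^2(\Omega)}^2$ is itself integrable against $e^{2\lambda t}$ only after one has shown $\|\nabla v\|_{L^2}$ decays; alternatively — and this is what the $(C_{27}+C_{28}t)e^{-2\lambda t}$ form suggests — one bounds $e^{2\lambda s}\|\nabla v(s)\|_{L^2(\Omega)}^2$ by a constant using an a priori \emph{exponential} bound on $\|\nabla v\|_{L^2(\Omega)}$ of rate $\lambda$ (each factor $e^{\lambda s}\|\nabla v\|$ bounded), so that $\int_0^t e^{2\lambda s}\|\nabla v(s)\|_{L^2(\Omega)}^2\,ds\le C t$, whence $e^{2\lambda t}y(t)\le C_{27}+C_{28}t$ and (\ref{w1}) follows. \textbf{The main obstacle} is thus precisely this: establishing that $\|\nabla v\|_{L^2(\Omega)}$ already decays like $e^{-\lambda t}$ (or at least that $\int_0^t e^{2\lambda s}\|\nabla v\|_{L^2(\Omega)}^2\,ds$ grows at most linearly), which presumably rests on a companion energy estimate for $v$ — testing (\ref{eq1v}) with $e^w v$ or $e^w v_t$ as in Lemma \ref{p7} but now tracking the exponential weight and using the lower bound $u\ge\lambda$ to produce a genuine decay term $-\lambda\|\nabla v\|_{L^2}^2$ on the right-hand side. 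Once that companion estimate is in hand, the differential inequality above and Gronwall close the argument immediately.
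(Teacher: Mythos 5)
Your argument does not close: it ends by deferring the whole difficulty to an unproven ``companion estimate'' that $\left\Vert \nabla v\right\Vert _{L^{2}(\Omega)}$ decays like $e^{-\lambda t}$ (or that $\int_{0}^{t}e^{2\lambda s}\left\Vert \nabla v(s)\right\Vert _{L^{2}(\Omega)}^{2}ds$ grows at most linearly). That estimate is nowhere established in the paper and is not an easy add-on: any natural energy estimate for $v$ producing exponential decay of $\nabla v$ would itself need the decay of $\nabla w$ that you are trying to prove, so the proposed route is circular as it stands. There is also a smaller quantitative issue in your differential inequality: once you split the good term $-\lambda y$ to absorb the cross term by Young, the surviving decay rate drops below $2\lambda$, and if instead you estimate $\nabla v=e^{-w}(\nabla u-u\nabla w)$ the cross term produces an extra $+Cy$ with a constant that need not be smaller than $2\lambda$, so the ODE approach does not obviously yield the stated rate even granting the available bounds.

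The missing idea is that for $\beta=1$ the equation for $w$ integrates explicitly, $w=w_{0}e^{-\int_{0}^{t}u}$, and the paper works directly from this formula rather than from a differential inequality for $\int_{\Omega}\left\vert \nabla w\right\vert ^{2}$. Taking the spatial gradient and using Cauchy--Schwarz in time gives
\begin{equation*}
\left\vert \nabla w\right\vert ^{2}\leqslant2e^{-2\int_{0}^{t}u}\left\vert \nabla w_{0}\right\vert ^{2}+2te^{-2\int_{0}^{t}u}\left\vert w_{0}\right\vert ^{2}\int_{0}^{t}\left\vert \nabla u\right\vert ^{2},
\end{equation*}
the exponential factor is controlled by $e^{-2\lambda t}$ via the lower bound $u\geqslant\lambda$ of Lemma \ref{pos}, and --- this is the input you did not use --- the time-integrated term is bounded uniformly in $t$ by the Lyapunov functional: since $D(u,w)\geqslant4\int_{\Omega}\left\vert \nabla u^{1/2}\right\vert ^{2}$, one has $\int_{0}^{t}\int_{\Omega}\left\vert \nabla u\right\vert ^{2}\leqslant\left\Vert u\right\Vert _{L^{\infty}(0,t;L^{\infty}(\Omega))}\int_{0}^{t}D(u,w)\leqslant C_{26}\left\Vert u\right\Vert _{L^{\infty}(0,t;L^{\infty}(\Omega))}$ by Lemma \ref{FD} and (\ref{F_delta}). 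Integrating over $\Omega$ then gives exactly $(C_{27}+C_{28}t)e^{-2\lambda t}$, with the linear factor $t$ coming from the Cauchy--Schwarz step, and no decay of $\nabla v$ or $\nabla u$ is ever needed.
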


\begin{proof}
From (\ref{eq2}) we obtain%
\[
\left\vert \nabla w\right\vert ^{2}\leqslant2e^{-2\int\limits_{0}^{t}%
u}\left\vert \nabla w_{0}\right\vert ^{2}+2te^{-2\int\limits_{0}^{t}%
u}\left\vert w_{0}\right\vert ^{2}\int\limits_{0}^{t}\left\vert \nabla
u\right\vert ^{2}.
\]
Taking into account Lemma \ref{pos}\ we know that $u(x,t)\geqslant\lambda>0$
for all $x\in\Omega$, $t>0$. We have from the previous inequality
\begin{equation}
\int\limits_{\Omega}\left\vert \nabla w\right\vert ^{2}\leqslant2e^{-2\lambda
t}\int\limits_{\Omega}\left\vert \nabla w_{0}\right\vert ^{2}+2te^{-2\lambda
t}\left\Vert w_{0}\right\Vert _{L^{\infty}(\Omega)}^{2}\int\limits_{0}^{t}%
\int\limits_{\Omega}\left\vert \nabla u\right\vert ^{2}. \label{iexp}%
\end{equation}
Taking into account the hypothesis $(\mathcal{H})$, the estimates
(\ref{Lyap1}), (\ref{F_delta}) and because every term of the functional
$D(u,w)$ given by (\ref{Dly}) is positive, we obtain that the last term in
(\ref{iexp}) is bounded. More precisely
\begin{equation}
\int\limits_{0}^{t}\int\limits_{\Omega}\left\vert \nabla u\right\vert
^{2}dxds\leqslant\left\Vert u\right\Vert _{L^{\infty}(0,t;L^{\infty}(\Omega
))}\int\limits_{0}^{t}D(u,w)\leqslant C_{26}\left\Vert u\right\Vert
_{L^{\infty}(0,t;L^{\infty}(\Omega))} \label{du}%
\end{equation}
where $C_{26}=F(u_{0},w_{0})+\left\vert \Omega\right\vert $. Finally, from
(\ref{iexp}) and (\ref{du}) we obtain
\[
\int\limits_{\Omega}\left\vert \nabla w\right\vert ^{2}\leqslant2e^{-2\lambda
t}\left[  \left\Vert \nabla w_{0}\right\Vert _{L^{2}(\Omega)}^{2}%
+C_{26}t\left\Vert w_{0}\right\Vert _{L^{\infty}(\Omega)}^{2}\left\Vert
u\right\Vert _{L^{\infty}(0,t;L^{\infty}(\Omega))}\right]  =\left(
C_{27}+C_{28}t\right)  e^{-2\lambda t}%
\]
and we conclude the proof.
\end{proof}

\begin{proposition}
\label{p8}If there exists a positive constant $\gamma>0$ such that
$u_{0}(x)\geqslant\gamma$ for all $x\in\Omega$, then%
\begin{align*}
&  \left\Vert u(\cdot,t)-\overline{u}\right\Vert _{L^{2}(\Omega)}%
\leqslant\left\Vert u\right\Vert _{L^{\infty}(0,t;L^{\infty}(\Omega))}%
^{2}\left(  C_{27}+\frac{C_{28}}{2}t\right)  te^{-C_{30}t}\text{,} & \text{if
}\delta &  =0\\
&  \left\Vert u(\cdot,t)-1\right\Vert _{L^{2}(\Omega)}\leqslant\left[
\left\Vert u_{0}-1\right\Vert _{L^{2}(\Omega)}^{2}+\left\Vert u\right\Vert
_{L^{\infty}(0,t;L^{\infty}(\Omega))}^{2}\left(  C_{27}+\frac{C_{28}}%
{2}t\right)  t\right]  e^{-2\lambda\min\left\{  1,\delta\right\}  t}\text{,} &
\text{if }\delta &  >0\\
&  \left\Vert w(\cdot,t)\right\Vert _{L^{\infty}(\Omega)}\leqslant\left\Vert
w_{0}\right\Vert _{L^{\infty}(\Omega)}\,e^{-\lambda t} &  &
\end{align*}
where $\overline{u}=\frac{1}{\left\vert \Omega\right\vert }\int\limits_{\Omega
}u_{0}$, $\lambda=\min\left\{  1,\gamma\right\}  e^{-\left\Vert w_{0}%
\right\Vert _{L^{\infty}(\Omega)}}>0$ and $C_{27}$, $C_{28}$, $C_{30}$ are
positive constants independent on $t$.
\end{proposition}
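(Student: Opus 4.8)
The three assertions are essentially independent; all of them rest on the pointwise lower bound $u(x,t)\ge\lambda:=\min\{1,\gamma\}e^{-\|w_0\|_{L^\infty(\Omega)}}$ furnished by Lemma \ref{pos} and on the decay estimate $\int_\Omega|\nabla w|^2\le(C_{27}+C_{28}t)e^{-2\lambda t}$ of Lemma \ref{estim copy(1)}. The bound on $w$ is immediate: since $\beta=1$, equation (\ref{eq2}) integrates to $w(x,t)=w_0(x)\exp\!\big(-\int_0^t u(x,s)\,ds\big)$, and because $\int_0^t u(x,s)\,ds\ge\lambda t$ by Lemma \ref{pos}, one obtains $\|w(\cdot,t)\|_{L^\infty(\Omega)}\le\|w_0\|_{L^\infty(\Omega)}e^{-\lambda t}$.

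For the two $L^2$ estimates on $u$ I would test (\ref{eq1}) with $u-c$, taking $c=\overline u$ when $\delta=0$ and $c=1$ when $\delta>0$. After integration by parts the no-flux condition (\ref{bc}) makes the boundary contributions of the diffusion and chemotaxis terms cancel exactly, so that
\[
\frac{d}{dt}\,\tfrac12\|u-c\|_{L^2(\Omega)}^2=-\int_\Omega|\nabla u|^2+\int_\Omega u\,\nabla u\cdot\nabla w+\delta\int_\Omega(u-c)\,u(1-u).
\]
If $\delta=0$ the last term is absent and, since (\ref{mass0}) forces $\int_\Omega u(\cdot,t)=\int_\Omega u_0$, we have $\int_\Omega(u-\overline u)=0$, so the Poincar\'e--Wirtinger inequality gives $\int_\Omega|\nabla u|^2\ge C_{30}\|u-\overline u\|_{L^2(\Omega)}^2$ (with $C_{30}$ possibly lowered so that $C_{30}\le\lambda$). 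If $\delta>0$ the reaction term equals $-\delta\int_\Omega u(u-1)^2\le-\delta\lambda\,\|u-1\|_{L^2(\Omega)}^2$ by Lemma \ref{pos}. In both situations the chemotactic cross term is controlled by Cauchy's inequality, $\big|\int_\Omega u\,\nabla u\cdot\nabla w\big|\le\tfrac12\int_\Omega|\nabla u|^2+\tfrac12\|u\|_{L^\infty(0,t;L^\infty(\Omega))}^2\int_\Omega|\nabla w|^2$, and absorbed by half of the dissipation. This yields a differential inequality
\[
\frac{d}{dt}\|u-c\|_{L^2(\Omega)}^2+2\kappa\,\|u-c\|_{L^2(\Omega)}^2\le\|u\|_{L^\infty(0,t;L^\infty(\Omega))}^2\int_\Omega|\nabla w|^2 ,
\]
with $\kappa=C_{30}$ if $\delta=0$ and $\kappa=\lambda\min\{1,\delta\}$ if $\delta>0$, the latter being the uniform (non-sharp) rate obtained by weakening the natural rate $\delta\lambda$.

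It only remains to insert $\int_\Omega|\nabla w|^2\le(C_{27}+C_{28}t)e^{-2\lambda t}$ from Lemma \ref{estim copy(1)} and integrate by Gronwall's lemma. Because $\kappa\le\lambda$ by construction, the convolution $\int_0^t e^{-2\kappa(t-s)}\big(C_{27}+C_{28}s\big)e^{-2\lambda s}\,ds$ is dominated by $e^{-2\kappa t}\big(C_{27}+\tfrac{C_{28}}{2}t\big)t$; together with the contribution $\|u_0-c\|_{L^2(\Omega)}^2e^{-2\kappa t}$ of the initial datum (absorbed into the constants in the case $\delta=0$, and kept explicitly as $\|u_0-1\|_{L^2(\Omega)}^2e^{-2\kappa t}$ when $\delta>0$) this gives the two claimed estimates for $u$. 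I do not foresee a genuine analytic difficulty once Lemmas \ref{pos} and \ref{estim copy(1)} are in hand; the only delicate points are organisational --- picking $c$ so that the no-flux boundary terms disappear, keeping enough dissipation after absorbing the chemotactic term, and, in the Gronwall step, matching the two exponential rates so that the product of the two decaying exponentials is bounded by a single exponential, which forces one to work with the smaller of the two rates.
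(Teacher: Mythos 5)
Your proposal is correct and follows essentially the same route as the paper: multiply (\ref{eq1}) by $u-\sigma$ with $\sigma=\overline{u}$ (using Poincar\'e) or $\sigma=1$ (using the lower bound $u\geqslant\lambda$ from Lemma \ref{pos}), absorb the chemotactic cross term by Cauchy's inequality, and close with Gronwall against the decay of $\int_{\Omega}|\nabla w|^{2}$ from Lemma \ref{estim copy(1)}, while the bound on $w$ comes directly from the explicit formula and $u\geqslant\lambda$. The only cosmetic remark is that the boundary terms cancel for any constant $\sigma$ by (\ref{bc}), not because of the particular choice of $c$, but this does not affect the argument.
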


\begin{proof}
Let $\sigma$ be a positive constant to be chosen later. We multiply the
equation (\ref{eq1}) by $(u-\sigma)$ and integrate over $\Omega$
\begin{equation}
\frac{d}{dt}\int\limits_{\Omega}\left(  u-\sigma\right)  ^{2}\leqslant
-\int\limits_{\Omega}\left\vert \nabla u\right\vert ^{2}+\left\Vert
u\right\Vert _{L^{\infty}(0,t;L^{\infty}(\Omega))}^{2}\int\limits_{\Omega
}\left\vert \nabla w\right\vert ^{2}-2\delta\int\limits_{\Omega}%
u(u-1)(u-\sigma). \label{sigma}%
\end{equation}
First we consider the case $\delta=0$ and $\sigma=\overline{u}=\frac
{1}{\left\vert \Omega\right\vert }\int\limits_{\Omega}u_{0}(x)$. Applying the
Poincar\'{e} inequality in (\ref{sigma}) we obtain%
\begin{equation}
\frac{d}{dt}\int\limits_{\Omega}\left(  u-\overline{u}\right)  ^{2}+C_{29}%
\int\limits_{\Omega}\left(  u-\overline{u}\right)  ^{2}\leqslant\left\Vert
u\right\Vert _{L^{\infty}(0,t;L^{\infty}(\Omega))}^{2}\int\limits_{\Omega
}\left\vert \nabla w\right\vert ^{2}. \label{de1}%
\end{equation}
Applying the Gronwall inequality in the last estimate and taking into account
(\ref{w1})\ we have%
\[
\int\limits_{\Omega}\left(  u-\overline{u}\right)  ^{2}\leqslant\left\Vert
u\right\Vert _{L^{\infty}(0,t;L^{\infty}(\Omega))}^{2}\left(  C_{27}%
+\frac{C_{28}}{2}t\right)  te^{-C_{30}t}%
\]
where $C_{30}=\min\left\{  2\lambda,C_{29}\right\}  $.

Let now $\delta\neq0$ and $\sigma=1$. Using Lemma \ref{pos} we obtain from
(\ref{sigma})
\begin{equation}
\frac{d}{dt}\int\limits_{\Omega}\left(  u-1\right)  ^{2}+2\lambda\delta
\int\limits_{\Omega}(u-1)^{2}\leqslant\left\Vert u\right\Vert _{L^{\infty
}(0,t;L^{\infty}(\Omega))}^{2}\int\limits_{\Omega}\left\vert \nabla
w\right\vert ^{2}. \label{de2}%
\end{equation}
Applying Gronwall's inequality and (\ref{w1}) it follows that
\[
\int\limits_{\Omega}\left(  u-1\right)  ^{2}\leqslant\left[  \left\Vert
u_{0}-1\right\Vert _{L^{2}(\Omega)}^{2}+\left\Vert u\right\Vert _{L^{\infty
}(0,t;L^{\infty}(\Omega))}^{2}\left(  C_{27}+\frac{C_{28}}{2}t\right)
t\right]  e^{-2\lambda\min\left\{  1,\delta\right\}  t}.
\]
Finally, from (\ref{wexpr}) we obtain
\[
w=w_{0}e^{-\int\limits_{0}^{t}u}\leqslant w_{0}e^{-\lambda t}%
\]
and we conclude the proof.
\end{proof}

\subsection{Polynomial convergence}

In this subsection we consider $\beta>1.$

\begin{lemma}
\label{estim}If there exists a positive constant $\gamma>0$ such that
$u_{0}(x)\geqslant\gamma$ for all $x\in\Omega$, then
\begin{equation}
\int\limits_{0}^{t}\left(  \left\Vert w_{0}\right\Vert _{L^{\infty}\left(
\Omega\right)  }^{1-\beta}+\lambda\left(  \beta-1\right)  s\right)  ^{\frac
{1}{\beta-1}}\int\limits_{\Omega}\left\vert \nabla w\right\vert ^{2}%
dxds\leqslant C_{31}\label{w2}%
\end{equation}
where $C_{31}$ is a positive constant independent on $t$ and $\lambda
=\min\left\{  1,\gamma\right\}  e^{-\left\Vert w_{0}\right\Vert _{L^{\infty
}(\Omega)}}>0$.
\end{lemma}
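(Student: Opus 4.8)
The plan is to mimic the structure of Lemma \ref{estim copy(1)} (the $\beta=1$ case), but carry the weight $\left(\|w_0\|_{L^\infty(\Omega)}^{1-\beta}+\lambda(\beta-1)s\right)^{\frac{1}{\beta-1}}$ through the computation, which will compensate for the slower (polynomial rather than exponential) decay that $\beta>1$ produces. First I would start from the equation (\ref{eq2}) in the form $w_t=-w^\beta u$, which by the explicit representation near (\ref{wexpr}) gives
\begin{equation*}
w(x,t)=\left(w_0^{1-\beta}(x)+(\beta-1)\int_0^t u(x,s)\,ds\right)^{\frac{1}{1-\beta}}.
\end{equation*}
Using Lemma \ref{pos}, which guarantees $u(x,s)\geqslant\lambda>0$, and $w_0\leqslant\|w_0\|_{L^\infty(\Omega)}$, one obtains the pointwise bound
\begin{equation*}
0<w(x,t)\leqslant\left(\|w_0\|_{L^\infty(\Omega)}^{1-\beta}+\lambda(\beta-1)t\right)^{\frac{1}{1-\beta}}=:\omega(t),
\end{equation*}
so $\omega(t)^{-1}=\left(\|w_0\|_{L^\infty(\Omega)}^{1-\beta}+\lambda(\beta-1)t\right)^{\frac{1}{\beta-1}}$ is exactly the weight appearing in (\ref{w2}).

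Next I would differentiate $\nabla w$ in time. From $\nabla w_t=-\beta w^{\beta-1}u\nabla w-w^\beta\nabla u$ one gets, after multiplying by $\nabla w$ and integrating over $\Omega$,
\begin{equation*}
\frac{1}{2}\frac{d}{dt}\int_\Omega|\nabla w|^2=-\beta\int_\Omega w^{\beta-1}u|\nabla w|^2-\int_\Omega w^\beta\nabla u\cdot\nabla w.
\end{equation*}
The first term on the right is nonpositive (it is essentially $\frac{\beta}{2}\int_\Omega uw^{-1}|\nabla w|^2$ up to the power of $w$, i.e. a piece of the dissipation $D(u,w)$ from (\ref{Dly})); the second I would bound by Cauchy's inequality, splitting $w^\beta\nabla u\cdot\nabla w$ so as to absorb part into the good term and leave $C\,w^{2\beta-1}|\nabla u|^2 u^{-1}$ or, more simply, estimate $\int_\Omega w^\beta\nabla u\cdot\nabla w\leqslant\tfrac12\int_\Omega w^{\beta-1}u^{-1}\cdot(\text{something})$. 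The cleaner route, paralleling the $\beta=1$ proof, is instead to multiply the $\nabla w_t$ identity by $\omega(t)^{-1}\nabla w$, so that the time-derivative of $\omega(t)^{-1}\int_\Omega|\nabla w|^2$ picks up the extra term $-\frac{d}{dt}\big(\omega(t)^{-1}\big)\int_\Omega|\nabla w|^2$; since $\frac{d}{dt}\big(\omega(t)^{-1}\big)=\lambda\,\omega(t)^{-\beta}\geqslant 0$ grows like $\omega^{-\beta}$, and $\beta w^{\beta-1}u\geqslant \beta\lambda w^{\beta-1}\geqslant\beta\lambda\,\omega(t)^{\beta-1}$ only helps, one arranges that the weighted dissipation dominates. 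After integrating in time and using that every term of $D(u,w)$ in (\ref{Dly}) is nonnegative together with the Lyapunov bound $\int_0^tD(u,w)\leqslant F(u_0,w_0)+|\Omega|$ (from (\ref{Lyap1}) and (\ref{F_delta}), exactly as in (\ref{du})), plus the uniform $L^\infty$ bound on $u$ from Proposition \ref{estimat_infinity}, the right-hand side is controlled by a constant $C_{31}$ independent of $t$.

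The main obstacle I anticipate is bookkeeping the cross term $\int_\Omega w^\beta\nabla u\cdot\nabla w$ against the right weight: one must check that the $w$-power on the good dissipative term $\int_\Omega w^{\beta-1}u|\nabla w|^2$, combined with $u\geqslant\lambda$ and $w\leqslant\omega(t)$, really produces a coefficient comparable to $\frac{d}{dt}(\omega(t)^{-1})=\lambda\omega(t)^{-\beta}$ so that no net growth survives — in other words, verifying the differential inequality
\begin{equation*}
\frac{d}{dt}\left(\omega(t)^{-1}\int_\Omega|\nabla w|^2\right)+c\int_\Omega w^{\beta-1}u\,\omega(t)^{-1}|\nabla w|^2\leqslant \omega(t)^{-1}\int_\Omega w^{2\beta}u^{-1}|\nabla u|^2
\end{equation*}
(or a version thereof) and then recognizing the right-hand side, after using $w\leqslant\omega(t)$ so that $w^{2\beta}\omega(t)^{-1}\leqslant w^{2\beta-1}$, as bounded by $\|u\|_{L^\infty}^{?}\,w^{-1}|\nabla u|^2$, hence integrable in time via the Lyapunov estimate. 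A secondary technical point is the initial value: at $t=0$ the weight equals $\|w_0\|_{L^\infty(\Omega)}^{-1}$ (finite) and $\int_\Omega|\nabla w_0|^2$ is finite by hypothesis $(\mathcal H)$ (Remark after Lemma \ref{FD}), so the integrated inequality is well posed. Once these estimates are assembled, integrating from $0$ to $t$ yields (\ref{w2}) with $C_{31}$ depending only on $\|w_0\|_{L^\infty(\Omega)}$, $\|\nabla w_0\|_{L^2(\Omega)}$, $F(u_0,w_0)$, $|\Omega|$, $\gamma$, $\beta$ and the uniform $L^\infty$ bound on $u$ — all independent of $t$, as required.
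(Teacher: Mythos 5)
Your route does not close, for two independent reasons. First, the absorption step rests on a reversed inequality: since $\beta>1$ and $w\leqslant\omega(t)$, you have $w^{\beta-1}\leqslant\omega(t)^{\beta-1}$, \emph{not} $w^{\beta-1}\geqslant\omega(t)^{\beta-1}$ as you claim when arguing that "$\beta\lambda w^{\beta-1}\geqslant\beta\lambda\,\omega(t)^{\beta-1}$ only helps". To absorb the term coming from differentiating the weight, $\frac{d}{dt}\bigl(\omega(t)^{-1}\bigr)\int_{\Omega}|\nabla w|^{2}$ (which, incidentally, equals $\lambda\,\omega^{\beta-2}\int_{\Omega}|\nabla w|^{2}$, not $\lambda\,\omega^{-\beta}\int_{\Omega}|\nabla w|^{2}$), into the dissipative term $\beta\,\omega^{-1}\int_{\Omega}w^{\beta-1}u|\nabla w|^{2}$ you would need a pointwise lower bound of the form $u\,w^{\beta-1}\geqslant\lambda\beta^{-1}\omega^{\beta-1}$, i.e.\ essentially $\int_{0}^{t}u(x,s)\,ds\lesssim\beta\,t\,u(x,t)$; this is not available (only $w\leqslant\omega$ holds, and the ratio $\omega/w$ can be of order $\bigl(\Vert u\Vert_{L^{\infty}}/\lambda\bigr)^{1/(\beta-1)}$, which need not be $\leqslant\beta^{1/(\beta-1)}$). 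Second, even if your differential inequality did close, it would deliver the wrong type of conclusion: integrating it in time gives a uniform bound on $\omega(t)^{-1}\int_{\Omega}|\nabla w(t)|^{2}$ together with a bound on $\int_{0}^{t}\omega^{-1}\int_{\Omega}u\,w^{\beta-1}|\nabla w|^{2}$, whose integrand carries an extra factor $u\,w^{\beta-1}\sim s^{-1}$; neither statement implies the time-integrability $\int_{0}^{t}\omega(s)^{-1}\int_{\Omega}|\nabla w|^{2}\,dx\,ds\leqslant C_{31}$ asserted in (\ref{w2}).

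The ingredients you list already suffice, but they must be assembled differently — and much more simply, which is what the paper does. Since $u\geqslant\lambda$ (Lemma \ref{pos}) and, by (\ref{wexpr}), $w^{-1}=\bigl(w_{0}^{1-\beta}+(\beta-1)\int_{0}^{t}u\bigr)^{1/(\beta-1)}\geqslant\bigl(\Vert w_{0}\Vert_{L^{\infty}(\Omega)}^{1-\beta}+\lambda(\beta-1)t\bigr)^{1/(\beta-1)}$, the weight in (\ref{w2}) is dominated pointwise by $\lambda^{-1}u\,w^{-1}$. Hence the weighted integrand is at most $\lambda^{-1}u\,w^{-1}|\nabla w|^{2}$, which (up to the factor $\beta/2$) is precisely the second term of the dissipation $D(u,w)$ in (\ref{Dly}); by (\ref{Lyap1}) and (\ref{F_delta}) its space-time integral is bounded by $F(u_{0},w_{0})+|\Omega|$, giving (\ref{w2}) at once with $C_{31}=\lambda^{-1}C_{26}$ (no Gronwall argument, no use of the $\int|\nabla u^{1/2}|^{2}$ part of $D$, and no weighted energy identity for $\nabla w$ are needed).
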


\begin{proof}
Taking into account the hypothesis $(\mathcal{H})$ and the estimates
(\ref{Lyap1}) and (\ref{F_delta}) we have
\begin{equation}
\int\limits_{0}^{t}\int\limits_{\Omega}uw^{-1}\left\vert \nabla w\right\vert
^{2}dxds\leqslant\int\limits_{0}^{t}D(u,w)\leqslant C_{26}. \label{intD}%
\end{equation}
From Lemma \ref{pos}\ we know that $u(x,t)\geqslant\lambda>0$ for all
$x\in\Omega$, $t>0$. Taking into account (\ref{wexpr}) we obtain%
\begin{equation}
uw^{-1}\geqslant\lambda\left(  w_{0}^{1-\beta}+\lambda\left(  \beta-1\right)
t\right)  ^{\frac{1}{\beta-1}}. \label{uw1}%
\end{equation}
The inequalities (\ref{intD}) and (\ref{uw1}) imply%
\[
\int\limits_{0}^{t}\left(  \left\Vert w_{0}\right\Vert _{L^{\infty}\left(
\Omega\right)  }^{1-\beta}+\lambda\left(  \beta-1\right)  s\right)  ^{\frac
{1}{\beta-1}}\int\limits_{\Omega}\left\vert \nabla w\right\vert ^{2}%
dxds\leqslant\lambda^{-1}\int\limits_{0}^{t}\int\limits_{\Omega}%
uw^{-1}\left\vert \nabla w\right\vert ^{2}dxds\leqslant\lambda^{-1}C_{26}%
\]
and we conclude the proof with $C_{31}=\lambda^{-1}C_{26}$.
\end{proof}

\begin{proposition}
If there exists a positive constant $\gamma>0$ such that $u_{0}(x)\geqslant
\gamma$ for all $x\in\Omega$, then%
\begin{align*}
&  \left\Vert u(\cdot,t)-\overline{u}\right\Vert _{L^{2}(\Omega)}\leqslant
C_{32}\left(  \left\Vert w_{0}\right\Vert _{L^{\infty}\left(  \Omega\right)
}^{1-\beta}+\lambda\left(  \beta-1\right)  t\right)  ^{-\frac{1}{\beta-1}%
}\text{,} & \text{if }\delta &  =0\\
&  \left\Vert u(\cdot,t)-1\right\Vert _{L^{2}(\Omega)}\leqslant C_{33}\left(
\left\Vert w_{0}\right\Vert _{L^{\infty}\left(  \Omega\right)  }^{1-\beta
}+\lambda\left(  \beta-1\right)  t\right)  ^{-\frac{1}{\beta-1}}\text{,} &
\text{if }\delta &  >0\\
&  \left\Vert w(\cdot,t)\right\Vert _{L^{\infty}(\Omega)}\leqslant\left[
\left\Vert w_{0}\right\Vert _{L^{\infty}\left(  \Omega\right)  }^{1-\beta
}+\lambda\left(  \beta-1\right)  t\right]  ^{-\frac{1}{\beta-1}} &  &
\end{align*}
where $\overline{u}=\frac{1}{\left\vert \Omega\right\vert }\int\limits_{\Omega
}u_{0}$, $\lambda=\min\left\{  1,\gamma\right\}  e^{-\left\Vert w_{0}%
\right\Vert _{L^{\infty}(\Omega)}}>0$ and $C_{32}$, $C_{33}$ are positive
constants independent on $t$.
\end{proposition}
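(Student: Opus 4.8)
The plan is to follow the proof of Proposition \ref{p8} almost line for line, substituting for the exponential estimate on $\int_{\Omega}|\nabla w|^{2}$ used there the weighted estimate just obtained in Lemma \ref{estim}. Write $\rho(t)=\bigl(\|w_{0}\|_{L^{\infty}(\Omega)}^{1-\beta}+\lambda(\beta-1)t\bigr)^{1/(\beta-1)}$, so that $\rho$ is positive and increasing, $\rho'=\lambda\rho^{\,2-\beta}$, and the three asserted bounds read $\|u(\cdot,t)-\sigma\|_{L^{2}(\Omega)}\leq C/\rho(t)$ (with $\sigma=\overline u$ if $\delta=0$ and $\sigma=1$ if $\delta>0$) and $\|w(\cdot,t)\|_{L^{\infty}(\Omega)}\leq 1/\rho(t)$. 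Throughout I use Lemma \ref{pos} (hence $u(x,t)\geq\lambda$), the uniform‑in‑time $L^{\infty}$ bound on $u$ provided by Proposition \ref{estimat_infinity} and the remark following it, and the mass identity (\ref{mass0}).

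First I would reproduce the energy inequality (\ref{sigma}): multiplying (\ref{eq1}) by $u-\sigma$, integrating over $\Omega$, noting that the boundary contributions of $\Delta u$ and $\nabla\cdot(u\nabla w)$ cancel by (\ref{bc}), and applying Young's inequality to $\int_{\Omega}u\,\nabla w\cdot\nabla u$, one gets
\[
\frac{d}{dt}\int_{\Omega}(u-\sigma)^{2}\leq-\int_{\Omega}|\nabla u|^{2}+\|u\|_{L^{\infty}(0,t;L^{\infty}(\Omega))}^{2}\int_{\Omega}|\nabla w|^{2}-2\delta\int_{\Omega}u(u-1)(u-\sigma).
\]
For $\delta=0$ take $\sigma=\overline u$; since (\ref{mass0}) with $\delta=0$ gives conservation of total mass, $\overline u=|\Omega|^{-1}\int_{\Omega}u(\cdot,t)$, and the Poincaré–Wirtinger inequality yields $\int_{\Omega}|\nabla u|^{2}\geq C_{29}\int_{\Omega}(u-\overline u)^{2}$. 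For $\delta>0$ take $\sigma=1$ and use $u\geq\lambda$ to get $-2\delta\int_{\Omega}u(u-1)^{2}\leq-2\lambda\delta\int_{\Omega}(u-1)^{2}$. In either case, writing $\Phi(t)=\int_{\Omega}(u-\sigma)^{2}$ and $c=C_{29}$ respectively $c=2\lambda\delta$, we obtain
\[
\Phi'(t)+c\,\Phi(t)\leq\|u\|_{L^{\infty}(0,t;L^{\infty}(\Omega))}^{2}\int_{\Omega}|\nabla w|^{2}=:g(t).
\]

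The key step is to turn Lemma \ref{estim} into polynomial decay of $\Phi$. By that lemma and the uniform $L^{\infty}$ bound on $u$ we have $\int_{0}^{t}\rho(s)g(s)\,ds\leq C_{31}\|u\|_{L^{\infty}}^{2}=:\widetilde C$ for all $t$. I would use $\rho$ itself as an integrating factor: since $\beta>1$, $\rho'/\rho=\lambda\rho^{1-\beta}\to0$ as $t\to\infty$, so there is $t_{0}\geq0$ with $\rho'(s)\leq c\,\rho(s)$ for $s\geq t_{0}$, whence $\frac{d}{dt}(\rho\Phi)=\rho'\Phi+\rho\Phi'\leq(\rho'-c\rho)\Phi+\rho g\leq\rho g$ on $[t_{0},\infty)$. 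Integrating over $[t_{0},t]$ gives $\rho(t)\Phi(t)\leq\rho(t_{0})\Phi(t_{0})+\widetilde C$; on $[0,t_{0}]$ one controls $\Phi$ directly from $\Phi'\leq g$ and $\int_{0}^{t}g\leq\widetilde C/\rho(0)$, and since $\rho$ is increasing this patches together to $\Phi(t)\leq C\rho(t)^{-1}$ for all $t\geq0$, which is the claimed estimate for $\|u(\cdot,t)-\sigma\|_{L^{2}(\Omega)}$ with $C=C_{32}$ (case $\delta=0$) or $C=C_{33}$ (case $\delta>0$).

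For the bound on $w$ I would argue directly from (\ref{eq2}) rather than through the Lyapunov functional: $\frac{d}{dt}\,w^{1-\beta}=(\beta-1)u\geq(\beta-1)\lambda$ by Lemma \ref{pos}, and since $1-\beta<0$ implies $w_{0}^{1-\beta}(x)\geq\|w_{0}\|_{L^{\infty}(\Omega)}^{1-\beta}$, integration in $t$ yields $w^{1-\beta}(x,t)\geq\|w_{0}\|_{L^{\infty}(\Omega)}^{1-\beta}+\lambda(\beta-1)t$, i.e. $w(x,t)\leq\rho(t)^{-1}$ for every $x\in\Omega$ (the same conclusion follows by bounding below the exponent in (\ref{wexpr})). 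The only delicate point in the whole argument is, in the key step, that the weight $\rho$ grow slowly enough that $\rho'\Phi$ is absorbed by $-c\rho\Phi$ for large $t$; this is exactly where the hypothesis $\beta>1$ is used, and everything else is routine and parallel to Proposition \ref{p8}.
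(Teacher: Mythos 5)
Your proposal is correct and follows essentially the same route as the paper: you derive the same differential inequalities (\ref{de1})--(\ref{de2}) via (\ref{sigma}), use the weight $\left(\left\Vert w_{0}\right\Vert_{L^{\infty}(\Omega)}^{1-\beta}+\lambda(\beta-1)t\right)^{\frac{1}{\beta-1}}$ as an integrating factor whose logarithmic derivative is absorbed by the decay constant after a finite time (your $t_{0}$ is exactly the paper's $t_{01}$, $t_{02}$), and invoke Lemma \ref{estim} to bound the weighted source term. Your direct computation of $\frac{d}{dt}w^{1-\beta}$ for the bound on $w$ is just a restatement of the paper's use of (\ref{wexpr}), and your explicit patching over $[0,t_{0}]$ is a minor (welcome) refinement rather than a different argument.
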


\begin{proof}
First we consider the case $\delta=0$.

Let us observe that for $t>t_{01}=\max\left\{  0,\frac{1}{\lambda\left(
\beta-1\right)  }\left(  \frac{\lambda}{C_{29}}-\left\Vert w_{0}\right\Vert
_{L^{\infty}\left(  \Omega\right)  }^{1-\beta}\right)  \right\}  $ we have
\begin{align}
&  \frac{d}{dt}\left[  \left(  \left\Vert w_{0}\right\Vert _{L^{\infty}\left(
\Omega\right)  }^{1-\beta}+\lambda\left(  \beta-1\right)  t\right)  ^{\frac
{1}{\beta-1}}\int\limits_{\Omega}\left(  u-\overline{u}\right)  ^{2}dx\right]
=\nonumber\\
&  =\left(  \left\Vert w_{0}\right\Vert _{L^{\infty}\left(  \Omega\right)
}^{1-\beta}+\lambda\left(  \beta-1\right)  t\right)  ^{\frac{1}{\beta-1}%
}\left[  \frac{d}{dt}\int\limits_{\Omega}\left(  u-\overline{u}\right)
^{2}dx+\lambda\left(  \left\Vert w_{0}\right\Vert _{L^{\infty}\left(
\Omega\right)  }^{1-\beta}+\lambda\left(  \beta-1\right)  t\right)  ^{-1}%
\int\limits_{\Omega}\left(  u-\overline{u}\right)  ^{2}dx\right]
\leqslant\nonumber\\
&  \leqslant\left(  \left\Vert w_{0}\right\Vert _{L^{\infty}\left(
\Omega\right)  }^{1-\beta}+\lambda\left(  \beta-1\right)  t\right)  ^{\frac
{1}{\beta-1}}\left[  \frac{d}{dt}\int\limits_{\Omega}\left(  u-\overline
{u}\right)  ^{2}+C_{29}\int\limits_{\Omega}(u-\overline{u})^{2}\right]  .
\label{esm}%
\end{align}
We multiply (\ref{de1}) by $\left(  \left\Vert w_{0}\right\Vert _{L^{\infty
}\left(  \Omega\right)  }^{1-\beta}+\lambda\left(  \beta-1\right)  t\right)
^{\frac{1}{\beta-1}}$ and then we integrate between $t_{01}$ and an arbitrary
$t>t_{01}$. Taking into account (\ref{w2}) and (\ref{esm}) we have%
\[
\int\limits_{\Omega}\left(  u-\overline{u}\right)  ^{2}dx\leqslant
C_{33}\left(  \left\Vert w_{0}\right\Vert _{L^{\infty}\left(  \Omega\right)
}^{1-\beta}+\lambda\left(  \beta-1\right)  t\right)  ^{-\frac{1}{\beta-1}}%
\]
where $C_{32}=\frac{1}{2\varepsilon}C_{31}\left\Vert u\right\Vert _{L^{\infty
}(0,t;L^{\infty}(\Omega))}^{2}+\left(  \left\Vert w_{0}\right\Vert
_{L^{\infty}\left(  \Omega\right)  }^{1-\beta}+\lambda\left(  \beta-1\right)
t_{01}\right)  ^{\frac{1}{\beta-1}}\int\limits_{\Omega}\left(  u(x,t_{01}%
)-\overline{u}\right)  ^{2}dx$.

If $\delta\neq0$ we have for $t>t_{02}=\max\left\{  0,\frac{1}{\lambda\left(
\beta-1\right)  }\left(  \frac{1}{2\delta}-\left\Vert w_{0}\right\Vert
_{L^{\infty}\left(  \Omega\right)  }^{1-\beta}\right)  \right\}  $
\begin{align}
&  \frac{d}{dt}\left[  \left(  \left\Vert w_{0}\right\Vert _{L^{\infty}\left(
\Omega\right)  }^{1-\beta}+\lambda\left(  \beta-1\right)  s\right)  ^{\frac
{1}{\beta-1}}\int\limits_{\Omega}\left(  u-1\right)  ^{2}dx\right]
\leqslant\nonumber\\
&  \leqslant\left(  \left\Vert w_{0}\right\Vert _{L^{\infty}\left(
\Omega\right)  }^{1-\beta}+\lambda\left(  \beta-1\right)  s\right)  ^{\frac
{1}{\beta-1}}\left[  \frac{d}{dt}\int\limits_{\Omega}\left(  u-1\right)
^{2}+2\lambda\delta\int\limits_{\Omega}(u-1)^{2}\right]  . \label{est}%
\end{align}
Multiplying (\ref{de2}) by $\left(  \left\Vert w_{0}\right\Vert _{L^{\infty
}\left(  \Omega\right)  }^{1-\beta}+\lambda\left(  \beta-1\right)  t\right)
^{\frac{1}{\beta-1}}$, integrating between $t_{02}$ and an arbitrary
$t>t_{02}$ and taking into account the last inequality and (\ref{w2}), we
obtain%
\[
\int\limits_{\Omega}\left(  u-1\right)  ^{2}dx\leqslant C_{33}\left(
\left\Vert w_{0}\right\Vert _{L^{\infty}\left(  \Omega\right)  }^{1-\beta
}+\lambda\left(  \beta-1\right)  t\right)  ^{-\frac{1}{\beta-1}}%
\]
where $C_{33}=\frac{1}{2\varepsilon}C_{31}\left\Vert u\right\Vert _{L^{\infty
}(0,t;L^{\infty}(\Omega))}^{2}+\left(  \left\Vert w_{0}\right\Vert
_{L^{\infty}\left(  \Omega\right)  }^{1-\beta}+\lambda\left(  \beta-1\right)
t_{02}\right)  ^{\frac{1}{\beta-1}}\int\limits_{\Omega}\left(  u\left(
x,t_{02}\right)  -1\right)  ^{2}dx$.

From (\ref{wexpr}) we obtain%
\[
w=\left[  w_{0}^{1-\beta}+\left(  \beta-1\right)  \int\limits_{0}^{t}u\right]
^{\frac{1}{1-\beta}}\leqslant\left[  w_{0}^{1-\beta}+\lambda\left(
\beta-1\right)  t\right]  ^{\frac{1}{1-\beta}}%
\]
and we conclude the proof.
\end{proof}

\section{Acknowledgements}

This work was partially supported by the RTN \textquotedblright Modeling,
Mathematical Methods and Computer Simulation of Tumour Growth and
Therapy\textquotedblright\ (MRTN-CT-2004-503661). The first author was also
partially supported by projects DGES (Spain) Grant MTM2007-61755 and CEEX
Grant 05-D11-36/05.10.05.

\end{document}